\tikzset{node distance=2cm, auto}
\DeclareMathOperator{\Ho}{Ho}
\DeclareMathOperator{\tr}{Tr}
\DeclareMathOperator{\Cob}{Cob}
\DeclareMathOperator{\PCob}{Pre-Cob}
\DeclareMathOperator{\Tw}{Tw}
\DeclareMathOperator{\Endo}{End}
\DeclareMathOperator{\Kom}{Kom}
\DeclareMathOperator{\Ann}{Ann}
\DeclareMathOperator{\Mat}{Mat}
\DeclareMathOperator{\Morph}{Hom}
\DeclareMathOperator{\Cone}{Cone}
\DeclareMathOperator{\Univ}{U}
\DeclareMathOperator{\TL}{TL}
\DeclareMathOperator{\K}{K}
\DeclareMathOperator{\Tot}{Tot}
\DeclareMathOperator{\Hom}{Hom}
\newcommand{\conj}[1]{\quad\textnormal{ #1 }\quad}
\newcommand{\inp}[1]{\ensuremath{\langle #1 \rangle}}
\newcommand{\coneqnOne}[0]{(8.1)}
\newcommand{\normaltext}[1]{\textnormal{#1}}
\newcommand{\quantsl}[0]{\Univ_q\mathfrak{sl}(2)}
\newcommand{\quantsu}[0]{\Univ_q\mathfrak{sl}(2)}
\def\imod#1{\allowbreak\mkern2.5mu({\operator@font mod}\,#1)}
\renewcommand{\a}{\alpha}
\newcommand{\e}{\epsilon}
\newcommand{\opp}{\oplus}
\newcommand{\ott}{\otimes}
\renewcommand{\d}{\delta}
\newcommand{\CPic}[1]{
\begin{minipage}{.45in}
\includegraphics[scale=.75]{#1}
\end{minipage}
}
\newcommand{\CPPic}[1]{
\begin{minipage}{.8in}
\includegraphics[scale=1.1]{#1}
\end{minipage}
}
\newcommand{\MPic}[1]{
\begin{minipage}{.35in}
\includegraphics[scale=.45]{#1}
\end{minipage}
}
\newcommand{\aA}{\mathcal{A}}
\newcommand{\aC}{\mathcal{C}}
\newcommand{\aE}{\mathcal{E}}
\newcommand{\aL}{\mathcal{L}}
\newcommand{\aO}{\mathcal{O}}
\newcommand{\aQ}{\mathcal{Q}}
\newcommand{\aT}{\mathcal{T}}
\newcommand{\CC}{\mathbb{C}}
\newcommand{\ZZ}{\mathbb{Z}}
\newcommand{\eA}{\EuScript{A}}
\newcommand{\eZ}{\EuScript{Z}}
\theoremstyle{plain}
\newtheorem{theorem}[subsection]{Theorem}
\newtheorem*{statement}{Statement}
\newtheorem{proposition}[subsection]{Proposition}
\newtheorem{corollary}[subsection]{Corollary}
\newtheorem{lemma}[subsection]{Lemma}
\theoremstyle{remark}
\newtheorem{remark}[subsection]{Remark}
\newtheorem{vista}[subsection]{Vista}
\newtheorem{notation}[subsection]{Notation}
\newtheorem{observation}[subsection]{Observation}
\theoremstyle{definition}
\newtheorem{example}[subsection]{Example}
\newtheorem{definition}[subsection]{Definition}
\numberwithin{equation}{section}
\def\imod#1{\allowbreak\mkern2.5mu({\operator@font mod}\,#1)}
\newcommand{\dominates}{\unrhd}
\newcommand{\dominatedBy}{\unlhd}
\newcommand{\ndominatedBy}{\not{\hskip-7pt}\unlhd}
\DeclareMathOperator{\Id}{Id}
\DeclareMathOperator{\op}{op}
\newcommand{\CPicTailbox}[1]{\Bigg[\CPic{#1}\hspace{-.04in}\Bigg]}
\newcommand{\CPicTailboxx}[1]{\Bigg[\CPic{#1}\hspace{.06in}\Bigg]}
\newcommand{\CPicTailboxxTwo}[1]{\Bigg[\Bigg[\CPic{#1}\hspace{.06in}\Bigg]\Bigg]}
\newcommand{\MPicTailboxx}[1]{\Big[\MPic{#1}\hspace{-.03in}\Big]}
\newcommand{\xto}[1]{\xrightarrow{#1}}
\newcommand{\mf}[1]{\mathfrak{#1}}
\begin{document}
\title[An Exceptional Collection for Khovanov Homology]{An Exceptional Collection for Khovanov Homology}

\author[Benjamin Cooper and Matt Hogancamp]{Benjamin Cooper and Matt Hogancamp}
\address{Institut f\"{u}r Mathematik, Universit\"{a}t Z\"{u}rich, Winterthurerstrasse 190, CH-8057 Z\"{u}rich}
\email{benjamin.cooper\char 64 math.uzh.ch}
\address{Department of Mathematics, University of Virginia, Charlottesville, VA 22904}
\email{mhoganca\char 64 gmail.com}

\begin{abstract}
  The Temperley-Lieb algebra is a fundamental component of $SU(2)$
  topological quantum field theories.  We construct chain complexes
  corresponding to minimal idempotents in the Temperley-Lieb algebra. Our
  results apply to the framework which determines Khovanov
  homology. Consequences of our work include semi-orthogonal decompositions
  of categorifications of Temperley-Lieb algebras and Postnikov
  decompositions of all Khovanov tangle invariants.
\end{abstract}

\maketitle

\tableofcontents

%\linenumbers

\section{Introduction}
The purpose of this paper is to give a general method for lifting an
idempotent decomposition of the Temperley-Lieb algebra $\TL_n$ to a
decomposition of its categorification. Roughly speaking, we lift each
idempotent $p\in \TL_n$ to a chain complex $P \in\Kom(n)$ so that the equation
$$p\cdot p = p \conj{ lifts to } P\otimes P \simeq P.$$

Along the way, we find new skein theoretic expressions for the decomposition
of the Temperley-Lieb algebra, the categories which determine Khovanov
homology are extended to differential graded categories, some mapping spaces
are computed and a complete decomposition of these categories is introduced.

\subsection{The Temperley-Lieb algebra and its decomposition}
A $k$-algebra $A$ can be often be expressed as a sum:
\begin{equation}\label{topeq}
A = A_1 \opp A_2 \opp \cdots \opp A_N.
\end{equation}
This information can be encoded by a collection of elements: $\{p_i\}_{i=1}^N$ called {\em projectors} or {\em idempotents}. Each $p_i\in A$ determines a projection and an inclusion
$$A \xto{p_i\cdot} A_i \hookrightarrow A$$
to and from the subspace $A_i$ in $A$. Equation \eqref{topeq} implies that the collection $\{p_i\}$ satisfies the equations:
\begin{equation}\label{idempeqn}
p_i \cdot p_i = p_i, \conj{ } p_i \cdot p_j = 0  \normaltext{ when } i\ne j\conj{ and } 1_A = \sum_{i=1}^N p_i.
\end{equation}
If each projector $p_i$ cannot be written as a sum of two non-trivial projectors then the collection
$\{p_i\}$ is a {\em complete set of primitive mutually-orthogonal
  idempotents}.

Suppose that $\quantsl$ is the quantum group associated to the Lie algebra
$\mf{sl}(2)$ and $V = V_1$ is the 2-dimensional irreducible representation
corresponding to the action of $\mf{sl}(2)$ on $\CC^2$. The {\em Temperley-Lieb
algebra} $\TL_n$ is the endomorphism algebra of the $n$-fold tensor power of $V$:
$$\TL_n = \Endo_{\quantsl}(V^{\ott n}).$$
The inner product $V\ott V \to \CC(q)$ and its dual $\CC(q) \to V\ott V$
generate the Temperley-Lieb algebra $\TL_n$. Drawing the latter as a cup and
the former as a cap gives rise to a pictorial representation of every
element in $\TL_n$. For example, the composition
$$V\ott V \to \CC(q) \to V\ott V\conj{ is pictured as } \MPic{n2-s}\in \TL_2.$$
This graphical interpretation leads to an alternative, topological, definition of the
Temperley-Lieb algebras as quotients of categories of 1-dimensional
cobordisms. An important consequence of these two different definitions is
the ability to characterize constructions involving the Temperley-Lieb
algebra in two very different ways.

The Temperley-Lieb algebra is semi-simple and a complete set of primitive
mutually-orthogonal idempotents can be described using representation
theory. If $V_k$ is the $k$th irreducible representation of $\mf{sl}(2)$
then the tensor product $V_1^{\ott n}$ can be written as a direct sum of
irreducible representations:
\begin{equation}\label{abveq}
V_1^{\otimes n} \cong V_n \oplus m_{n-2} V_{n-2} \oplus m_{n-4} V_{n-4} \oplus \cdots.
\end{equation}
The number $m_{k} = \dim_{\CC(q)} \Hom(V_k, V_1^{\ott n})$ is the {\em multiplicity} of $V_k$ in $V_1^{\ott n}$.
Corresponding to each summand $V_k \subset m_k V_k \subset V_1^{\ott n}$ is an idempotent
$$p : V_1^{\ott n} \to V_k \hookrightarrow V_1^{\ott n} \in \TL_n.$$
For example, the multiplicity $m_n$ of $V_n$ in $V_1^{\ott n}$ is always
equal to $1$ and the idempotent associated to $V_n \subset V_1^{\ott n}$ is
the famous {\em Jones-Wenzl projector} \cite{WN}. In this paper, we are
interested in the entire collection:
$$\{p_W : W \subset V_1^{\ott n} \normaltext{ such that } W\cong V_k \normaltext{ for some } k \normaltext{ in } \eqref{abveq} \}.$$
Unfortunately, the definition given above is not useful in practice. In
Section \ref{TL section}, we will begin by finding more convenient
expressions for these idempotents in terms of the Jones-Wenzl projectors.

\begin{example}\label{introex1}
When $n=4$, there is a decomposition:
$$V_1^{\ott 4} = V_4 \opp 3 V_2 \opp 2 V_0,$$
which contains a unique copy of the $5$-dimensional irreducible
representation $V_4$, three copies of the $3$-dimensional representation $V_2$
and two copies of the $1$-dimensional trivial representation $V_0$. There are six
idempotents $p_{\e}$ in the Temperley-Lieb algebra $\TL_4$; each
corresponding to projecting onto distinct irreducible summands.  They
are pictured below:
$$\begin{array}{lll}
p_{4,4} & = p_{(1,1,1,1)} & = \MPic{p4box}\\
p_{4,2} & = p_{(1,1,1,-1)} + p_{(1,-1,1,1)} + p_{(1,1,-1,1)} & = \frac{[3]}{[4]} \MPic{p4tail} + \frac{1}{[2]} \MPic{n4-e1p2} + \frac{[2]}{[3]} \MPic{n4-pp2}\\
p_{4,0} & = p_{(1,-1,1,-1)} + p_{(1,1,-1,-1)} & = \frac{1}{[2]^2} \MPic{n4-e1e3} + \frac{1}{[3]} \MPic{n4-pp0}.\\
\end{array}$$
In this illustration, the number $[n] \in\ZZ[q,q^{-1}]$ is the $n$th quantum integer:
$$[n] = q^{-(n-1)} + q^{-(n-3)} + \cdots +  q^{n-3} + q^{n-1}.$$
The subscript $\e = (\e_1,\ldots,\e_4)$ indicates that $p_\e$ corresponds to projection onto a distinct summand isomorphic to $V_{\e_1+\cdots+\e_4}$, see Definition \ref{domOrderstuff}. The $k$th row corresponds to the $k$th isotypic or {\em higher order} projector $p_{n,k}\in\TL_n$. The boxes in the pictures above represent Jones-Wenzl projectors $p_n\in \TL_n$.
\end{example}

\subsection{Categorifications and decompositions}

The idea behind categorification is to replace a $k$-algebra $A$ by a
monoidal category $\eA$. When $\eA$ is monoidal the Grothendieck group
$\K_0(\eA)$ becomes a ring. A category $\eA$ {\em categorifies} a $k$-algebra $A$ when
there is an isomorphism:
\begin{equation}\label{catisoeq}
\K_0(\eA)\ott_{\ZZ} k \xto{\sim} A.
\end{equation}
There are as many examples of categorifications as there are monoidal
categories for which the Grothendieck group functor can be defined. In order
to ensure that something interesting happens one usually asks for the
category $\eA$ to satisfy some additional properties.

In \cite{Kh00}, Khovanov introduced a categorification of the Jones
polynomial. In subsequent papers \cite{KH, DBN}, this homological invariant
of links was refined to a local invariant of tangles, taking values in
categories $\Kom(n)$. There are isomorphisms:
$$\K_0(\Kom(n))\ott_{\ZZ} \CC(q) \xto{\sim} \TL_n,$$
making the categories $\Kom(n)$ categorifications of the Temperley-Lieb
algebras $\TL_n$. In addition to being categorifications, they satisfy the
constraint that they determine knot invariants.

Other categorifications of the Temperley-Lieb algebra have been shown to
lead to knot invariants. Some come from derived categories of coherent
sheaves, others come from enumerative invariants of Lagrangian fibrations,
perverse sheaves on Grassmannians, the category $\aO$, matrix factorizations
or sheaves concentrated on type $A_2$ singularities, \cite{CauK1, SeiS,
  Stroppel, BFK, KhRoz1, Wu, Orlov}. However, the requirement that such
categories determine knot invariants is very strong \cite{KhFrob}. The
categories $\Kom(n)$ are minimal with respect to these constraints. This
makes them the ideal setting for constructions which apply to other
categorifications in this family.

If an algebra $A$ is categorified by $\eA$ then an element $p\in A$ is
categorified by a choice of $P\in \eA$ for which $\K_0(P) = p$. Whenever one
encounters an element $p$ in an algebra $A$, one can ask for lifts $P\in
\eA$. Equation \eqref{catisoeq} implies that there will be at least one lift,
but there may be others depending upon how many extensions exist between
objects in the category $\eA$.

In \cite{CK}, the Jones-Wenzl projector $p_n\in\TL_n$ was lifted to a chain
complex $P_n \in\Kom(n)$ which satisfies $\K_0(P_n) = p_n$. In addition to
being idempotent, $P_n\ott P_n\simeq P_n$, the lift $P_n$ was characterized
uniquely up to homotopy in the category $\Kom(n)$. The properties which
determine $P_n$ are given in terms of the topological description of the
category $\Kom(n)$ and provide a unique counterpoint to algebraic
descriptions that can be obtained by Yoneda's Lemma or localization (as in
Section \ref{postnikovdecompsec}).  There are deep relations between the
categorified projectors $P_n$ and constructions in mathematical physics and
algebraic geometry \cite{GOR1, GS, R3}.

One of the main results of this paper is the construction of chain complexes
$P_\e\in \Kom(n)$ corresponding to each of the idempotents $p_\e \in
\TL_n$ which were discussed in the previous section. More precisely, we show that:
\begin{itemize}
\item {\bf Theorem}  For each $\e$, there is a chain complex $P_\e\in \Kom(n)$ such that $\K_0(P_\e) = p_\e$.

\item {\bf Theorem}  The chain complexes $P_\e$ are idempotent and mututally orthogonal:
$$P_\e \ott P_\e \simeq P_e \conj{ and } P_\e \ott P_\d \simeq 0 \normaltext{ when } \e \ne \d.$$

\item {\bf Theorem}  The projectors $P_\e$ glue together to form a chain complex $R_n$ which satisfies:
$$1_n \simeq R_n$$
\noindent where $1_n\in\Kom(n)$ is the monoidal identity. This homotopy equivalence corresponds to the second part of Equation \eqref{idempeqn} above.
\end{itemize}

An important new ingredient in this setting is the maps between objects. In
the Temperley-Lieb algebra, different subspaces did not interact because
they corresponded to the images of distinct irreducible summands of
$V_1^{\ott n}$. After lifting the idempotents $p_\e$ defining these
subspaces to objects $P_\e \in \Kom(n)$, we find that they {\em must}
interact: there are non-trivial maps between idempotents. However, a more
refined statement can be made. We address the question of what this
interaction looks like in the Theorem below.
\begin{itemize}
\item {\bf Theorem} The mapping spaces between projectors which do not respect the dominance order are contractible:
$$\e\ndominatedBy\d \conj{ implies } \Hom^*(P_\e,P_\d)\simeq 0,$$ 
see Section \ref{homspacessec}.
\end{itemize}

It follows from this theorem that all of the objects in the categories
$\Kom(n)$ are filtered by the projectors $P_\e$. In turn, this filtration
can be used to define Postnikov towers for all objects, including tangle
invariants. The idempotents $\{P_\e\}$ form an {\em exceptional collection}
for Khovanov homology.

\begin{example}\label{introex2}
  The picture of the resolution of identity $R_4$ captures many aspects of
  the information conveyed above. Each projector $p_\e$ in Example
  \ref{introex1} lifts to a chain complex $P_\e$ that satisfies $\K_0(P_\e)
  = p_\e$. These categorified projectors form the vertices of the diagram
  below. The arrows represent non-trivial maps between projectors that
  control the decomposition of the category $\Kom(4)$. There are no arrows
  pointing from right to left.

\begin{center}
\begin{tikzpicture}[scale=10, node distance=2.5cm]
\node (Z1) {$1_4$};
\node (Z2) [right=.25cm of Z1] {$\simeq$};
\node (N2) [right=.25cm of Z2] {$P_{(1,1,-1,-1)}$};
\node (A2) [above of=N2] {$P_{(1,-1,1,-1)}$};
\node (B2) [right of=A2] {$P_{(1,-1,1,1)}$};
\node (C2) [right of=B2] {$P_{(1,1,1,1)}$};
\node (D2) [below of=B2] {$P_{(1,1,-1,1)}$};
\node (E2) [below of=D2] {$P_{(1,1,1,-1)}$};
\draw[->] (A2) to node {} (N2);
\draw[->] (N2) to node {} (D2);
\draw[->] (A2) to node {} (B2);
\draw[->] (B2) to node {} (C2);
\draw[->] (B2) to node {} (D2);
\draw[->] (D2) to node {} (E2);
\draw[->, bend right=45] (B2) to node {} (E2);
\draw[->, bend right] (D2) to node {} (C2);
\draw[->, bend right=50] (E2) to node {} (C2);
\end{tikzpicture}
\end{center}
\end{example}

\section{The Temperley-Lieb category and higher order projectors} \label{TL section}
In this section we summarize basic information about the Temperley-Lieb
category, $\TL$, the Temperley-Lieb algebra, $\TL_n$, and the Jones-Wenzl
projectors, $p_n\in \TL_n$. In Section \ref{idempotentsirred sec}, the
higher order projectors $p_{n,k}\in\TL_n$ are defined representation
theoretically. In Section \ref{genjwprojsec}, new projectors $p_\e\in\TL_n$ are
introduced and related to the higher order projectors. This allows us to
characterize $p_{n,k}$ uniquely in terms of its interaction with other
Temperley-Lieb elements. It is this latter definition which will lift to
chain complexes in Section \ref{charprojsec}. For more information about the
Temperley-Lieb algebra and its connection to low-dimensional topology see
\cite{KL}.

\subsection{Temperley-Lieb category}\label{TLcat}

Here we define the Temperley-Lieb category $\TL$ and establish some basic
notions, such as the through-degree $\tau(a)$ of elements $a\in\TL$.

\begin{definition}\label{TL alg}
The {\em Temperley-Lieb category} $\TL$ is the category of $\quantsu$-equivariant
maps from $n$-fold to $k$-fold tensor powers of the fundamental
representation $V$. 

More specifically, the objects of $\TL$ are indexed by integers $n$
corresponding to tensor powers, $V^{\otimes n}$, of the fundamental
representation and the morphisms
\begin{equation}\label{sl2interp}
 \TL(n,k) = \Morph_{\quantsl}(V^{\otimes n},V^{\otimes k}).
\end{equation}
are determined by $\quantsl$-equivariant maps. The elements of $\TL(n,k)$
can be represented by $\CC(q)$-linear combinations of pictures consisting of
chords from a collection of $n$ points to a collection of $k$ points
situated on two horizontal lines in the plane. Such pictures correspond to
compositions of the maps
$$\CC(q) \to V\ott V \conj{ and } V \ott V\to \CC(q).$$
Pictures are considered equivalent when they are isotopic relative to the
boundary. We also impose the relation that a disjoint circle can be removed
at the cost of multiplying by the graded dimension of $V$: $q + q^{-1}$.
\end{definition}

A sample element of the space of morphisms from four points to six points is
pictured below.
\begin{equation}\label{randomtleqn}\CPic{randomtl} \in \TL(4,6)\end{equation}
When elements are represented by such pictures, the composition 
$$\TL(n,k) \otimes \TL(k,l) \to \TL(n,l) \conj{where} a\otimes b \mapsto ba$$
in the Temperley-Lieb category corresponds to vertical stacking.

\begin{definition}
  There are two operations relating different parts of $\TL$ that will be
  used repeatedly:
$$x \mapsto x \sqcup 1 \conj{ and } x \mapsto \bar{x}.$$
For each element $x \in \TL(n,k)$ there is an element
  $x\sqcup 1 \in\TL(n+1,k+1)$ obtained by placing a single vertical strand
  to the right of all of the diagrams appearing in the expression for $x$.  Given an
  element $x \in \TL(n,k)$ there is a corresponding element $\bar{x} \in
  \TL(k,n)$ obtained by flipping the diagrams representing $x$ upside
  down. Both of these operations are $q$-linear.
\end{definition}

\begin{definition}%{($\TL_n$)}
The {\em Temperley-Lieb algebra} $\TL_n$ is given by the endomorphisms of the $n$th object in the Temperley-Lieb category:
$$\TL_n = \TL(n,n) = \Endo_{\quantsl}(V^{\otimes n}).$$ 
\end{definition}

\begin{definition}
The elements of $\TL_n$ are generated by {\em elementary diagrams} $e_i$ containing
$n-2$ vertical chords and two horizontal chords connecting the $i$th and the
$i+1$st positions. For instance,
$$e_1 =\CPic{n3-e1} \in \TL_3.$$
\end{definition}

\begin{definition}\label{throughdegdef}
  Suppose that $a \in \TL(n,m)$ is a Temperley-Lieb diagram then there are
  many ways in which $a$ factors as a composition $a = cb$ where $b\otimes c
  \in \TL(n,l)\otimes \TL(l,m)$.  The \emph{through-degree} $\tau(a)$ of $a$
  is equal to the minimal $l$ achieved by such a factorization. If $a\in
  \TL(n,m)$ is a linear combination of Temperley-Lieb diagrams, $a = \sum_i
  f_i a_i$, then the through-degree of $a$ is defined by
$$\tau(a)=\max_i \{ \tau(a_i)\ \vert\ f_i \ne 0 \}.$$
\end{definition}

\begin{example}
The through-degrees $\tau$ of the two diagrams, \eqref{randomtleqn} and $e_1$, pictured above are two and one respectively.
\end{example}

\begin{remark}\label{REMREF}
  Through-degree can not increase when composing elements of $\TL$. In this
  manner, the category $\TL$ is filtered by through-degree. Let
  $\TL^k\subset \TL$ denote the subcategory consisting of morphisms that
  have a through-degree which is less than $k$:
$$\TL^k(n,m) = \{ f\in \TL(n,m) : \tau(f) < k \}.$$
Then there is a filtration
$$\cdots \subset \TL^{k-1} \subset \TL^{k} \subset \TL^{k+1} \subset \cdots\conj{ and } \TL = \cup_k \TL^k$$
which is respected by operations in the sense that:
$$\TL^k\ott \TL^k \xto{\circ} \TL^k \conj{ and } \TL^k\ott \TL^l \xto{\sqcup} \TL^{k+l}.$$
\end{remark}

\begin{remark}
  Instead of through-degree, one might choose instead to filter elements $a
  \in \TL_n$ by the number of turnbacks: 
$$\cap(a) = (n-\tau(a))/2.$$
This convention also appears in the literature. While Theorem
\ref{thmgenproj} and Definition \ref{ukgenprojdef} can be stated in terms of
turnbacks, it is awkward to use $\cap(a)$ for general elements of the
category $\TL$.
\end{remark}

\subsection{Idempotents from irreducibles}\label{idempotentsirred sec}
In this section we will explain the connection between $\quantsl$
representation theory and higher order Jones-Wenzl projectors. These
higher order Jones-Wenzl projectors will be explored in Section
\ref{genjwprojsec} and categorified in Section \ref{mainconstructionsec}.

The irreducible representations $V_k$ of $\quantsl$ are indexed by integers
$k\in\ZZ_{\geq 0}$. The trivial representation $V_0$ is 1-dimensional and
the fundamental representation $V_1 \cong V$ is 2-dimensional. In general,
we can use the {\em Clebsch-Gordan} rule:
\begin{equation}\label{freestrandtensor}
V_n \otimes V_1 \cong V_{n+1} \oplus V_{n-1}
\end{equation}
to decompose the tensor product $V^{\otimes n}$ into a direct sum of irreducible
representations:
$$V^{\otimes n} \cong V_n \oplus m_{n-2} V_{n-2} \oplus m_{n-4} V_{n-4} \oplus \cdots.$$
For each summand $m_{k} V_{k} \subset V^{\otimes n}$, there are
equivariant projection and inclusion maps,
$$V^{\otimes n} \xto{\pi_{n,k}} m_{k} V_{k} \xto{i_{n,k}} V^{\otimes n}$$
and Equation \eqref{sl2interp} implies the existence of a Temperley-Lieb element $p_{n,k} = i_{n,k}\circ \pi_{n,k}\in\TL_n$.

\begin{definition}\label{gen JW idempotents}
The $k$th \emph{higher order Jones-Wenzl projector} is the idempotent element $p_{n,k}\in\TL_n$ corresponding to the summand $m_k V_k \subset V^{\ott n}$.
\end{definition}

In the remainder of this section we will provide several descriptions of
these idempotents. A different discussion can be found in \cite{C, R2}.

\subsection{Jones-Wenzl projectors}\label{JW proj}
The Jones-Wenzl Projectors $p_n \in \TL_n$ are a special case of Definition
\ref{gen JW idempotents}.  The largest irreducible summand $V_n \subset
V^{\ott n}$ occurs with multiplicity one: $m_n = 1$. Since the Jones-Wenzl
projectors correspond to projection onto this summand,
$$V^{\otimes n} \to V_n \to V^{\otimes n},$$
they correspond to the higher order Jones-Wenzl projector of largest degree: $p_n = p_{n,n}$. In \cite{WN}, Wenzl introduced projectors using the recurrence relation, $p_1 = 1$ and
\begin{align}\label{jwrecureqn}
p_n &= p_{n-1}\sqcup 1 - \frac{[n-1]}{[n]} (p_{n-1}\sqcup 1) e_{n-1} (p_{n-1}\sqcup 1)
\end{align}
where the quantum integer $[n]$ is defined to be the Laurent polynomial
$$[n] = \frac{q^n - q^{-n}}{q-q^{-1}} = q^{-(n-1)} + q^{-(n-3)} + \cdots +  q^{n-3} + q^{n-1}.$$
If we depict $p_n$ graphically by a box with $n$ incoming and $n$ outgoing
chords
$$p_n = \CPic{pn}$$
then Formula \eqref{jwrecureqn} can be illustrated using diagrams:
$$\CPic{pn} = \CPic{pnm1parjw} \,-\, \frac{[n-1]}{[n]}\! \CPic{pnm1sad}.$$

It can be shown that the Jones-Wenzl projectors are uniquely characterized by
the following properties:
\begin{enumerate}
\item $p_n \in \TL_n$. 
\item $p_n - 1$ belongs to the subalgebra generated by
  $\{e_1,e_2,\ldots,e_{n-1}\}$
\item $e_i p_n = p_n e_i = 0$ for all $i = 1, \ldots, n-1$.
\end{enumerate}
For more information see \cite{WN,KL,CK}.

\begin{remark}\label{remarkring}
Although the coefficient ring $\CC(q)$ is used throughout Section \ref{TL
  section}, we will consistantly interpret expressions like $[n]/[n+1]$ as a
power series in the ring $\ZZ[q^{-1}][[q]]$, see \cite{CK,CK2} for
discussion.
\end{remark}

\subsection{Higher order Jones-Wenzl projectors}\label{genjwprojsec}
Recall from Section \ref{idempotentsirred sec} that the $\quantsl$ representation $V^{\otimes n}$ decomposes as a sum of irreducible representations $V_{k}$,
$$V^{\otimes n} \cong \bigoplus_{k} m_{k} V_{k}.$$
Each summand $m_{k} V_{k} \subset V^{\otimes n}$ is a sum
of $m_{k}$ copies of the $k$th irreducible representation $V_{k}$. For each higher order projector
$p_{n,k}$ we would like to write an equation of the form
\begin{equation}\label{refinedidempotent}
p_{n,k} = \sum_{\e \in \aL_{n,k}} p_{\epsilon}
\end{equation}
where $\aL_{n,k}$ is a set indexing copies of $V_k$ in the summand $m_k
V_k\subset V^{\ott n}$ so that $|\aL_{n,k}| = m_{k}$. Each element $p_{\epsilon}\in \TL_n$
corresponds to projection onto a distinct copy of $V_{k}$ within $m_{k} V_{k}$. This
notation is introduced by the definition below.

\begin{definition}\label{domOrderstuff}
A \emph{sequence} $\epsilon$ is an $n$-tuple,
$$\epsilon = (i_1,i_2,\ldots,i_n) \quad\normaltext{ where }\quad i_k \in \{-1,1\} \normaltext{ for } 1\leq k \leq n.$$
The \emph{length} $l(\e)$ of a sequence $\epsilon = (i_1,i_2,\ldots,i_n)$ is given by $l(\e) = n$ and the \emph{size} $|\e|$ of $\e$ is defined to be the sum: $|\e|=i_1+\cdots+i_n$. 
If $\e = (i_1, i_2,\ldots,i_n)$ and $\delta = (j_1,j_2,\ldots,j_n)$ are two sequences then  $\e \dominates \delta$ when
$$i_1 + \cdots + i_k \geq j_1 + \cdots + j_k \normaltext{ for all } k = 1,\ldots, n.$$
For each sequence $\e$, if we denote by $0$ the $l(\e)$-tuple consisting
entirely of zeros then the sequence $\epsilon$ is \emph{admissible} if
$\epsilon \dominates 0$.

We denote by $\aL_n$ the {\em collection of all admissible sequences of length
$n$} and by $\aL_{n,k}\subset \aL_n$ the {\em collection of admissible sequences of
length $n$ and size $k$}.
$$\aL_n = \{ \e : l(\e) = n, \e\dominates 0 \}\conj{and} \aL_{n,k} =\{ \e \in\aL_n : |e| = k\}$$
The relation $\dominates$ when applied to these sets is called the
\emph{dominance order}.
\end{definition}

\begin{definition}
If $\e = (i_1,i_2, \ldots,i_n)$ is a sequence then we will use the notation
$\e \cdot (+1)$ and $\e \cdot (-1)$ to denote the sequence obtained from
$\e$ by appending $+1$ or $-1$ respectively.
$$\e\cdot (+1) = (i_1,i_2, \ldots,i_n,+1)\quad\normaltext{ and }\quad \e\cdot (-1) = (i_1,i_2, \ldots,i_n,-1)$$
\end{definition}

Associated to each $\e\in\aL_n$ is a special element $q_\e\in\TL_n$. Since
these special elements are vertically symmetric, it is easiest to define the
top half $t_\e$ first.

\begin{definition}\label{qedef}
If $\e \in \aL_n$ and $|\e| = k$ then there is an element  $t_\e \in \TL(k,n)$ defined inductively by $t_{(1)} = 1$, 
$$t_{\e\cdot (+1)} = \CPic{aepsilon1} \quad\normaltext{ and }\quad t_{\e\cdot (-1)} = \CPic{aepsilon2}$$
where the box represents a Jones-Wenzl projector $p_k$ and the
marshmallow-shaped region represents the element $t_\e$. The special element $q_\e\in \TL_n$ is equal to the top $t_\e$ composed with its reverse:
$$q_\e = t_\e \bar{t}_\e.$$
The relation $p_kp_k = p_k$ allows us to eliminate one of the two central $p_k$ in the definition of $q_\e$.
\end{definition}

The elements $q_\e$ satisfy a recurrence relation.

\begin{lemma}
For each sequence $\e\in\aL_{n,k}$ there is a recurrence relation:
\begin{equation}\label{qerecureqn}
q_\e \sqcup 1 = q_{\e\cdot (+1)} + \frac{[k]}{[k+1]} q_{\e \cdot (-1)}
\end{equation}
\end{lemma}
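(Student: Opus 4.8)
The plan is to unwind the definitions and reduce everything to the Jones–Wenzl recurrence \eqref{jwrecureqn}. Recall from Definition \ref{qedef} that $q_\e = t_\e \bar t_\e$ where $t_\e \in \TL(k,n)$ is built from a Jones–Wenzl projector $p_k$ capped off by the ``marshmallow'' $t_{\e'}$ of the predecessor. First I would observe how the operation $x \mapsto x \sqcup 1$ interacts with this description: adjoining a vertical strand to $q_\e = t_\e\bar t_\e$ is the same as adjoining a vertical strand to $t_\e$ and to $\bar t_\e$ separately, so $q_\e \sqcup 1 = (t_\e \sqcup 1)\overline{(t_\e\sqcup 1)}$. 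Now $t_\e\sqcup 1$ is a diagram in $\TL(k+1,n+1)$ that looks like $t_\e$ with an extra strand on the right, i.e.\ a $p_k$ (not $p_{k+1}$!) sitting at the bottom, with the marshmallow $t_{\e}$ on top, plus a free strand alongside.

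The key step is to rewrite the bottom of $t_\e\sqcup 1$ using the identity $p_k \sqcup 1 = p_k$ followed by the ``partial trace'' form of the recurrence: by \eqref{jwrecureqn} applied in $\TL_{k+1}$,
\begin{equation*}
p_{k+1} = (p_k\sqcup 1) - \frac{[k]}{[k+1]} (p_k\sqcup 1)\, e_k\, (p_k\sqcup 1),
\end{equation*}
so that
\begin{equation*}
p_k \sqcup 1 = p_{k+1} + \frac{[k]}{[k+1]} (p_k\sqcup 1)\, e_k\, (p_k\sqcup 1).
\end{equation*}
Substituting this decomposition of $p_k\sqcup 1$ into the bottom of $t_\e\sqcup 1$ splits $t_\e\sqcup 1$ as a sum of two terms. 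The first term, where $p_k\sqcup 1$ is replaced by $p_{k+1}$, is by construction exactly $t_{\e\cdot(+1)}$ — this is precisely the inductive clause $t_{\e\cdot(+1)} = \CPic{aepsilon1}$ in Definition \ref{qedef}. In the second term, the turnback $e_k$ caps off two of the bottom strands; pulling the turnback through and using that the marshmallow $t_\e$ above already absorbs a $p_k$ (so capping reduces the through-degree to $k-1$), one identifies the surviving diagram as $t_{\e\cdot(-1)}$, which is the clause $t_{\e\cdot(-1)} = \CPic{aepsilon2}$. Thus $t_\e\sqcup 1 = t_{\e\cdot(+1)} + \frac{[k]}{[k+1]} t_{\e\cdot(-1)}$ up to keeping careful track of which $p$-box appears where.

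Finally I would assemble $q_\e\sqcup 1 = (t_\e\sqcup 1)\overline{(t_\e\sqcup 1)}$ by expanding the product of the two-term expressions. This gives four terms; the two ``diagonal'' terms are $t_{\e\cdot(+1)}\bar t_{\e\cdot(+1)} = q_{\e\cdot(+1)}$ and $\frac{[k]^2}{[k+1]^2} t_{\e\cdot(-1)}\bar t_{\e\cdot(-1)} = \frac{[k]^2}{[k+1]^2} q_{\e\cdot(-1)}$, and the point will be that the two ``cross'' terms $t_{\e\cdot(+1)}\bar t_{\e\cdot(-1)}$ and $t_{\e\cdot(-1)}\bar t_{\e\cdot(+1)}$ either vanish or recombine. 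Here the mechanism is that $\bar t_{\e\cdot(+1)}$ contains a $p_{k+1}$ while $t_{\e\cdot(-1)}$ has through-degree $k-1 < k+1$, so the composite factors through $p_{k+1}e_k = 0$ by property (3) of the Jones–Wenzl projector; hence both cross terms vanish. Wait — that would give coefficient $[k]^2/[k+1]^2$ rather than $[k]/[k+1]$, so in fact one of the cross terms must survive and contribute; the correct bookkeeping is that after the turnback in $t_{\e\cdot(-1)}$ is resolved against $\bar t_{\e\cdot(-1)}$ one of the $[k]/[k+1]$ factors is cancelled by an absorption $p_k e_k p_k = \frac{[k-1]}{[k]}\cdots$ type simplification, leaving the stated coefficient $[k]/[k+1]$; carrying out this cancellation carefully is the main obstacle. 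The cleanest way to avoid the pitfall is to not expand $\bar t_\e\sqcup 1$ symmetrically but instead apply the recurrence only to the single bottom $p_k$-box of $q_\e$ (as the excerpt's lemma hint suggests: ``applying Equation \eqref{jwrecureqn} to the middle Jones-Wenzl projector $p_k$ of $q_\e$''), so that there is exactly one box to resolve and exactly two resulting terms, immediately yielding \eqref{qerecureqn} with no cross terms and the honest coefficient $[k]/[k+1]$. The bulk of the write-up is then the diagrammatic verification that resolving that middle box reproduces $q_{\e\cdot(+1)}$ and $q_{\e\cdot(-1)}$ on the nose.
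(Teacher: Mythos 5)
Your final route is exactly the paper's proof: since $t_\e$ ends in a $p_k$-box and $p_k^2=p_k$, the element $q_\e\sqcup 1=(t_\e\sqcup 1)(\bar t_\e\sqcup 1)$ contains a single projector $p_k\sqcup 1$ at its waist; substituting $p_k\sqcup 1=p_{k+1}+\frac{[k]}{[k+1]}(p_k\sqcup 1)e_k(p_k\sqcup 1)$ from \eqref{jwrecureqn} and absorbing projectors identifies the two resulting terms with $q_{\e\cdot(+1)}$ and $\frac{[k]}{[k+1]}q_{\e\cdot(-1)}$, which is all the paper does (note it is the \emph{middle} box of $q_\e$, not a ``bottom'' box, and the ``exactly one box'' claim rests on the idempotence merging the two copies coming from $t_\e$ and $\bar t_\e$). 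Two corrections to the detour you leave hanging, so it does not resurface in a write-up. First, the displayed identity $t_\e\sqcup 1=t_{\e\cdot(+1)}+\frac{[k]}{[k+1]}t_{\e\cdot(-1)}$ cannot be literally correct: $t_{\e\cdot(-1)}$ lies in $\TL(k-1,n+1)$ while the left side lies in $\TL(k+1,n+1)$; the second summand is $t_{\e\cdot(-1)}$ composed with a cap in $\TL(k+1,k-1)$ (the projectors being absorbed), not $t_{\e\cdot(-1)}$ itself. Second, your diagnosis that ``one of the cross terms must survive'' is wrong: in the symmetric expansion both cross terms do vanish, precisely because $p_{k+1}e_k=0$, and the apparent coefficient $\frac{[k]^2}{[k+1]^2}$ on the $(-,-)$ term is repaired not by a surviving cross term nor by a $p_ke_kp_k$ identity, but by the partial-trace identity (capping one strand of $p_k$ against itself gives $\frac{[k+1]}{[k]}p_{k-1}$), which yields $(p_k\sqcup 1)e_k(p_k\sqcup 1)e_k(p_k\sqcup 1)=\frac{[k+1]}{[k]}(p_k\sqcup 1)e_k(p_k\sqcup 1)$ and hence the stated coefficient $\frac{[k]}{[k+1]}$. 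So the symmetric route also closes once the bookkeeping is done correctly, but the one-box substitution you settle on is the paper's argument and the cleaner one; the remaining ``diagrammatic verification'' is just the absorption of $p_k\sqcup 1$ into $p_{k+1}$ for the first term and the matching of the $e_k$-turnback picture with $t_{\e\cdot(-1)}\bar t_{\e\cdot(-1)}$ for the second.
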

\begin{proof}
  This follows from applying Equation \eqref{jwrecureqn} in Section \ref{JW
  proj} to the middle Jones-Wenzl projector $p_k$ of $q_\e$ in the definition above.
\end{proof}

We will use the special elements $q_\e \in \TL_n$ to construct idempotents
corresponding to the decomposition described in Section
\ref{idempotentsirred sec}. The following two propositions tell us that
there are scalars $f_\e \in \CC(q)$ such that the collection $p_\e
= f_\e q_\e$ satisfies
\begin{enumerate}
\item $p_\e p_\nu = \delta_{\e,\nu} p_\e$
\item $1_n = \sum_{\e\in\aL_n} p_\e$
\end{enumerate}
where $1_n\in\TL_n$ is the identity element. The first proposition below
tells us that composing $p_\e$ and $p_\nu$ when $\e \neq \nu$ yields
zero. Theorem \ref{decompositionOfIdentity} will address the second
equation and the first equation when $\e = \nu$.

\begin{proposition}\label{orthogonality}
The special elements $q_\e\in\TL_n$ defined above are mutually orthogonal. 
$$q_\e q_\nu = 0 \quad \normaltext{ for } \quad \e\neq \nu$$
\end{proposition}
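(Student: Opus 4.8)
The plan is to prove orthogonality by induction on $n$, exploiting the recursion \eqref{qerecureqn} together with the defining property $e_i p_k = p_k e_i = 0$ of the Jones--Wenzl projector $p_k$ sitting inside each $q_\e$. The base case $n=1$ is vacuous since $\aL_1$ has a single element. For the inductive step, suppose $\e, \nu \in \aL_n$ are distinct, and write $\e = \e'\cdot(a)$, $\nu = \nu'\cdot(b)$ with $a,b\in\{\pm1\}$ and $\e',\nu'\in\aL_{n-1}$. The product $q_\e q_\nu$ is obtained by stacking the picture for $q_\e = t_\e\bar t_\e$ on top of $q_\nu = t_\nu \bar t_\nu$; the relevant interaction happens in the middle, where $\bar t_\e$ meets $t_\nu$, i.e. where we must compute $\bar t_\e t_\nu \in \TL(|\nu|, |\e|)$ and then sandwich it between $t_\e$ and $\bar t_\nu$.

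The key subclaim is the following refinement, which is what the induction should actually carry: for $\e,\nu \in \aL_{n}$, the composite $\bar t_\e t_\nu$ (a map $|\nu| \to |\e|$) vanishes unless $\e = \nu$, in which case it equals $p_{|\e|}$ (the Jones--Wenzl projector), possibly up to a nonzero scalar coming from circle removals. Granting this, $q_\e q_\nu = t_\e(\bar t_\e t_\nu)\bar t_\nu = 0$ when $\e\ne\nu$, which is the proposition. To prove the subclaim inductively, one expands $\bar t_{\e'\cdot(a)} t_{\nu'\cdot(b)}$ using Definition \ref{qedef}: in each of the four cases for $(a,b)$ one reads off from the pictures that the innermost strands either (i) produce a turnback that hits the Jones--Wenzl projector $p_{|\e'|}$ or $p_{|\nu'|}$ and kills the term via property (3) of Section \ref{JW proj}, or (ii) reduce, after removing a possible closed circle, to an instance of $\bar t_{\e'} t_{\nu'}$ at level $n-1$ with a strand adjoined. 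The dominance/size bookkeeping forces the surviving case to be $a = b$ and $\e' = \nu'$, hence $\e = \nu$. One should also use Remark \ref{remarkring} to make sense of the scalars as elements of $\ZZ[q^{-1}][[q]]$, though for this proposition only the vanishing matters, so the precise scalars are irrelevant.

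The main obstacle I anticipate is the careful case analysis of the pictures $t_{\e\cdot(\pm1)}$: one must verify that in the ``mixed'' cases ($a\ne b$, or $a = b$ but $\e'\ne\nu'$) a turnback genuinely lands adjacent to one of the internal Jones--Wenzl boxes so that property (3) applies, rather than slipping past it. This is a standard but slightly delicate skein-theoretic argument --- the sort of thing that is transparent on a hand-drawn diagram but tedious to phrase in prose. A clean way to organize it is to track the ``through-degree'' $\tau$ of Definition \ref{throughdegdef}: since $\tau$ cannot increase under composition (Remark \ref{REMREF}) and $t_\nu$ has through-degree exactly $|\nu|$ while $\bar t_\e$ factors through $|\e|$ strands meeting a projector, any term of $\bar t_\e t_\nu$ that does not vanish must ``fit'' a $V_{|\nu|}$ inside a $V_{|\e|}$, which by the Clebsch--Gordan constraints built into the $t$'s can only happen when the two sequences agree. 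I would present the argument with one or two illustrative pictures and relegate the remaining cases to the reader as ``entirely analogous.''
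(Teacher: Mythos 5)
Your argument is correct, and it runs on the same engine as the paper's proof---a Jones--Wenzl projector annihilates anything of strictly smaller through-degree---but it is organized quite differently. The paper gives a terse, non-inductive argument: write $q_\e = t_\e\bar{t}_\e$ and $q_\nu = t_\nu\bar{t}_\nu$, factor each $t$ through the Jones--Wenzl box it contains, and observe that the product sandwiches an element of through-degree at most $l$ between $p_k$ and $p_l$ with $k>l$, hence vanishes. As literally written that is cleanest when $|\e|\neq|\nu|$; when the sizes agree one must look inside the diagrams at the first index where the sequences differ, a point the paper leaves implicit. Your induction on the last letter, via the strengthened subclaim that $\bar{t}_\e t_\nu$ vanishes for $\e\neq\nu$ (and is a scalar multiple of $p_{|\e|}$ for $\e=\nu$), makes exactly this step explicit, so if anything it is more complete than the printed proof, at the cost of a four-case analysis. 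Two simplifications are worth noting. First, the delicate checking you worry about---whether a turnback genuinely lands on a projector---is unnecessary once you invoke through-degree as you suggest at the end: in the mixed case $a\neq b$ with $\e'=\nu'$ and $|\e'|=m$, the composite sitting below the big box lies in $\TL(m-1,m+1)$, so it has through-degree at most $m-1$ and is killed by $p_{m+1}$ outright. Second, for orthogonality you never need the nonzero scalar in the $\e=\nu$ part of your subclaim (and that scalar is a partial trace $[m+1]/[m]$ of a Jones--Wenzl projector rather than a circle removal); the weaker statement that $\bar{t}_\e t_\nu=0$ for $\e\neq\nu$, together with the through-degree observation, already closes the induction.
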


\begin{proof}
Using the definition of $q_\e$ found above we can write $q_\e = a \bar{a}$
and $q_\nu = b \bar{b}$. If $\e\neq \nu$ then $a=a' p_k$ and $b = b' p_l$
where $k\ne l$. The product $q_\nu q_\e$ contains $p_k \bar{a}' b' p_l$ which is
equal to zero. By symmetry, $q_\e q_\nu$ also vanishes.
\end{proof}

In the next proposition, we show that, for each $\e\in \aL_n$, there are
constants $f_\e\in \CC(q)$ and idempotents $p_\e = f_\e q_\e$
which yield the decomposition of identity $1_n\in\TL_n$ mentioned above.

\begin{theorem}\label{decompositionOfIdentity}
For each $\e\in\aL_n$, there are idempotents $p_\e \in \TL_n$ which satisfy,
$$1_n = \sum_{\e\in\aL_n} p_\e.$$
Moreover, $p_\e = f_\e q_\e$ for some non-zero scalar $f_\e \in\CC(q)$.
\end{theorem}

\begin{proof}
The proof is by induction on the number of strands $n$.  When $n$ is $1$ the
only sequence is $\e = (1)$; we set $f_\e = 1$ so that $p_\e = q_\e = 1$.

Assume that there is a decomposition of $1_{n-1}$ and place a disjoint
strand next to everything. We have
$$1_{n-1} \sqcup 1 = 1_n = \sum_{\e\in \aL_{n-1}} f_\e\; q_\e \sqcup 1 = \sum_k \sum_{\e\in \aL_{n-1,k}} f_\e\;q_\e \sqcup 1,$$
in which the elements $p_\e = f_\e q_\e$ are idempotent. The
recurrence relation \eqref{qerecureqn} implies that
\begin{equation}\label{decompositionStrands2}
1_{n} = \sum_k \sum_{\e\in \aL_{n-1,k}} f_\e \left(q_{\e\cdot (+1)} + \frac{[k]}{[k+1]} q_{\e \cdot (-1)}\right).
\end{equation}
Setting $p_{\e\cdot (+1)} = f_\e q_{\e\cdot (+1)}$ and $p_{\e\cdot (-1)}=f_\e
\frac{[k]}{[k+1]} q_{\e\cdot (-1)}$ yields the equation in the statement of
this proposition.

To show that the $p_\nu$ are idempotent, multiply $1_n = \sum_{\e} p_\e$ on
the left with $p_\nu$. The previous proposition implies that
$$p_\nu=\sum_{\e}p_\nu p_\e = p_\nu p_\nu.$$
\end{proof}

\begin{remark}
  By Proposition \ref{orthogonality}, $p_\e p_\nu = 0$ when $\e \ne \nu$
  because the projectors $p_\e$ differ from the elements $q_\e$ by scalars.
  The construction in the proof of Theorem \ref{decompositionOfIdentity}
  above implies the equation below.
\begin{equation}\label{peeqn}
p_\e \sqcup 1 = p_{\e\cdot (+1)} + p_{\e\cdot (-1)}
\end{equation}
By convention $p_\e = 0$ when $\e \not\in\aL_n$. This equation corresponds to
the Clebsch-Gordan rule \eqref{freestrandtensor} in Section
\ref{idempotentsirred sec}. 
\end{remark}

\begin{remark}Each idempotent $p_\e$ corresponds to projection onto one term $V_k \subset
m_k V_k\subset V^{\otimes n}$. The higher order projectors $p_{n,k}$
correspond to the entire subspace $m_k V_k \subset V^{\otimes n}$.
\end{remark}

\begin{definition}\label{generalizedprojdef}
The $k$th \emph{higher order Jones-Wenzl projector} $p_{n,k} \in \TL_n$ is given by the sum,
$$p_{n,k} = \sum_{\e \in \aL_{n,k}} p_\e.$$
\end{definition}

\begin{remark}
From Proposition \ref{orthogonality} and Theorem \ref{decompositionOfIdentity} it
follows that the elements $p_{n,k}\in\TL_n$ form a system of
mutually orthogonal idempotents. This means that 
$$p_{n,k} p_{n,l} = \delta_{k,l} p_{n,k} \conj{ and } 1_n = \sum_{k} p_{n,k}.$$
\end{remark}

\begin{remark}\label{factorobs}
Since $p_{n,k}$ is a sum of elements $p_\e$ with $|\e| = k$ and each $p_\e$
necessarily factors through a Jones-Wenzl projector $p_k$, the projector
$p_{n,k}\in\TL_n$ is a linear combination of diagrams which
factor as $b p_{k} a$ where $a\otimes b\in \TL(n,k)\otimes\TL(k,n)$.
\end{remark}

Although a definition of the higher order projectors $p_{n,k}$ was given in
Section \ref{idempotentsirred sec}, it is often useful to characterize
elements {\em intrinsically} in terms of their interaction with other
elements and gluing operations. This is the definition given below and the
one which will lift to the categorical setting in Section \ref{charprojsec}.

\begin{theorem}\label{thmgenproj}
The higher order Jones-Wenzl projectors
$$p_{n,k}\in\TL_n$$
of Definition \ref{generalizedprojdef} are characterized uniquely by the following properties
\begin{enumerate}
\item The through-degree $\tau(p_{n,k})$ of $p_{n,k}$ is equal to $k$.

\item The projector $p_{n,k}$ vanishes when the number of turnbacks is sufficiently high. For each $l\in\ZZ_+$ and $a \in \TL(n,l)$ if $\tau(a) < k$ then
$$a p_{n,k} = 0 \quad\normaltext{ and }\quad p_{n,k} \bar{a} = 0.$$ 

\item The projector $p_{n,k}$ fixes elements of through-degree $k$ up to lower through-degree terms. For each $l\in\ZZ_+$ and $a\in \TL(n,l)$ if $\tau(a) = k$ then
$$a p_{n,k} = a + b$$
where $\tau(b)<k$.
\end{enumerate}
\end{theorem}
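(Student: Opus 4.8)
The plan is to verify that $p_{n,k}$ satisfies properties (1)--(3), and then to deduce uniqueness formally from these together with the orthogonality and completeness of the family $\{p_{n,j}\}$. The main tool for the first part is Remark \ref{factorobs}: $p_{n,k}$ is a $\CC(q)$-linear combination $\sum_i c_i\, b_i\, p_k\, a_i$ with $a_i\in\TL(n,k)$ and $b_i\in\TL(k,n)$. I will also repeatedly use the standard absorption lemma for Jones--Wenzl projectors: if $w\in\TL(k,l)$ has $\tau(w)<k$, then $w\,p_k=0$. By $\CC(q)$-linearity this reduces to the case of a single diagram $w$ of through-degree $<k$, which necessarily contains a turnback among the $k$ boundary points meeting $p_k$; by planarity an innermost such turnback is between adjacent points, so the claim follows from $e_ip_k=0$. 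Dually, $p_k\,\bar w=0$.

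For property (2), given $a\in\TL(n,l)$ with $\tau(a)<k$, I expand $a\,p_{n,k}=\sum_i c_i\,(a\,b_i)\,p_k\,a_i$; since $\tau(a\,b_i)\le\tau(a)<k$, each term vanishes by the absorption lemma, so $a\,p_{n,k}=0$, and the dual statement $p_{n,k}\,\bar a=0$ follows because $p_{n,k}$ is vertically symmetric ($\overline{q_\e}=q_\e$, hence $\overline{p_{n,k}}=p_{n,k}$). For property (3), given $a$ with $\tau(a)=k$, I use $1_n=\sum_j p_{n,j}$ to write $a=\sum_j a\,p_{n,j}$; the same computation with $j>k$ shows $a\,p_{n,j}=0$ for $j>k$ (now $\tau(a\,b_i)\le\tau(a)=k<j$), while for $j<k$ the element $a\,p_{n,j}$ factors through $j<k$ strands and so has through-degree $<k$, whence $a\,p_{n,k}=a-\sum_{j<k}a\,p_{n,j}=a+b$ with $\tau(b)<k$. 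For property (1): $\tau(p_{n,k})\le k$ is immediate from Remark \ref{factorobs}, and $\tau(p_{n,k})\ge k$ holds because $p_{n,k}\colon V^{\otimes n}\to V^{\otimes n}$ is the identity on the isotypic component $m_kV_k$, which cannot factor through $V^{\otimes m}$ for $m<k$ since $\Hom(V_k,V^{\otimes m})=0$; alternatively, applying the just-proved property (3) to $a=\bar t_\e$ for some $\e\in\aL_{n,k}$ (so $\tau(a)=k$) gives $\tau(a\,p_{n,k})=k$, forcing $\tau(p_{n,k})\ge k$.

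For uniqueness, let $x\in\TL_n$ satisfy (1)--(3). Expanding $x=1_n\,x=\sum_j p_{n,j}\,x$: for $j<k$ we have $\tau(p_{n,j})=j<k$ (by property (1), already established for the genuine projectors), so $p_{n,j}\,x=0$ by property (2) for $x$; for $j>k$ we have $\tau(x)=k<j$ (by property (1) for $x$), so $p_{n,j}\,x=0$ by property (2) for $p_{n,j}$. Hence $x=p_{n,k}\,x$. Applying property (3) for $x$ with $a=p_{n,k}$ (legitimate since $\tau(p_{n,k})=k$) yields $p_{n,k}\,x=p_{n,k}+b$ with $\tau(b)<k$, so $b=x-p_{n,k}$ and therefore $p_{n,k}\,b=p_{n,k}\,x-p_{n,k}^2=x-p_{n,k}=b$. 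But $\tau(b)<k$ forces $p_{n,k}\,b=0$ by property (2) for $p_{n,k}$, so $b=0$ and $x=p_{n,k}$.

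I expect the only real difficulty to be the first part --- keeping composition orders straight when substituting the expansion of Remark \ref{factorobs}, and stating the absorption lemma precisely enough that it covers linear combinations rather than just individual diagrams. Once properties (1)--(3) are in hand for $p_{n,k}$, the uniqueness argument is purely formal, using only that the $p_{n,j}$ are orthogonal idempotents summing to $1_n$, that through-degree is non-increasing under composition, and that it is preserved by the flip.
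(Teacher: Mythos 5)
Your proposal is correct and follows essentially the same route as the paper: properties (1)--(3) are verified via the factorization of Remark \ref{factorobs} through $p_k$ together with the turnback-killing property of the Jones--Wenzl projector and the decomposition $1_n=\sum_j p_{n,j}$, and uniqueness is the same formal argument expanding $x=\sum_j p_{n,j}x$ and using properties (2), (3) with the idempotency from Proposition \ref{decompositionOfIdentity}. Your treatment is in fact slightly more careful than the paper's on the lower bound $\tau(p_{n,k})\geq k$ in property (1), which the paper leaves implicit.
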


In essence, these properties state that the projectors $p_{n,k}$ control and
respect the filtration of $\TL$ by through-degree $\tau$, see also the
discussion following Definition \ref{throughdegdef}.

\begin{proof}
We begin by proving that the elements $p_{n,k}$ defined above satisfy
properties (1)--(3). Using Remark \ref{factorobs} above, we can write
$p_{n,k}$ as a sum of the form,
$$p_{n,k} = \sum_{x,y} x p_k \bar{y}.$$
The first property follows from $\tau(p_k) = k$. Now pick some $l \in\ZZ_+$ and $a\in \TL(n,l)$. 

For the second property, if we assume that $\tau(a)<k$ then
$$a p_{n,k} = \sum_{x,y} a x p_k \bar{y} = 0$$
since $\tau(a d) \leq \tau(a) < k$ and $p_k$ kills diagrams of through-degree less than $k$.  For the same reason $p_{n,k}\bar{a} =0$. 

For the third property, if we assume that $\tau(a) = k$ then
\[
a = a 1_n = \sum_{l} a p_{n,l} = \sum_{l\leq k} a p_{n,l},
\]
so that rearranging terms gives $a p_{n,k}  = a - \sum_{l<k} a p_{n,l}$.

Suppose that $e\in\TL_n$ satisfies Properties (1)--(3) above, we will show
that $e = p_{n,k}$.
If $l<k$ then Property (2) for $e$ implies that $e p_{n,l} =0$.  If $l>k$
then Property (2) for $p_{n,l}$ implies that $e p_{n,l} =0$. Therefore,
$$ e = e 1_n = \sum_{l} e p_{n,l} = e p_{n,k}.$$
Property (3) implies that $e p_{n,k} = p_{n,k} + b$ where $\tau(b) < k$.  By Theorem \ref{decompositionOfIdentity},
$$e = e p_{n,k} = e p_{n,k}^2 = (p_{n,k}+b)p_{n,k} = p_{n,k}^2 = p_{n,k}.$$
\end{proof}

We continue our discussion of the higher order projectors with a series of
observations.

\newcommand{\ptwT}[0]{\frac{1}{2} k^2}
\newcommand{\ptwQ}[0]{k}
\newcommand{\ptwS}[0]{}

\newcommand{\ptwTsq}[0]{k^2}
\newcommand{\ptwQsq}[0]{2k}
\newcommand{\ptwSsq}[0]{}

\newcommand{\ntwT}[0]{-\frac{1}{2} k^2}
\newcommand{\ntwQ}[0]{-k}
\newcommand{\ntwS}[0]{}

\newcommand{\ntwTsq}[0]{-k^2}
\newcommand{\ntwQsq}[0]{-2k}
\newcommand{\ntwSsq}[0]{}
\newcommand{\pSHIFT}[0]{t^{\ptwT} q^{\ptwQ}}
\newcommand{\nSHIFT}[0]{t^{\ntwT} q^{\ntwQ}}

\newcommand{\DpSHIFT}[0]{(-1)^k q^{\frac{1}{2}k(k+1)}}
\newcommand{\DnSHIFT}[0]{(-1)^k q^{-\frac{1}{2}k(k+1)}}

\begin{proposition}\label{slidethroughobs}
For each element $a\in \TL(n,m)$, the equation $a p_{n,k} = p_{m,k} a$ holds.
$$\CPPic{atopnk} = \hspace{.5in}\CPPic{mktopa}$$
\end{proposition}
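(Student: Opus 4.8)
The plan is to deduce the identity $a\,p_{n,k} = p_{m,k}\,a$ from the intrinsic characterization of higher order projectors given in Theorem \ref{thmgenproj}, rather than from the representation-theoretic definition. The key observation is that both sides of the claimed equation should be thought of as ``the component of $a$ lying in through-degree exactly $k$,'' and Theorem \ref{thmgenproj}(2)--(3) is precisely the tool that pins this down. First I would reduce to the case where $a$ is a single Temperley-Lieb diagram by $\CC(q)$-linearity. Then I would split into two cases according to $\tau(a)$.

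If $\tau(a) < k$: here I claim both sides vanish. For the left side, $a\,p_{n,k}$: this is not immediately one of the stated vanishing properties (Property (2) of Theorem \ref{thmgenproj} is stated with $a$ on the \emph{left} of $p_{n,k}$, but with $a\in\TL(n,l)$ and $\tau(a)<k$ giving $a\,p_{n,k}=0$ — wait, that is exactly our left side). So $a\,p_{n,k}=0$ by Property (2) applied with $l=m$. For the right side $p_{m,k}\,a$ with $a\in\TL(n,m)$: write $\bar a\in\TL(m,n)$ with $\tau(\bar a)=\tau(a)<k$, and apply Property (2) in the form $p_{m,k}\,\overline{(\bar a)} = p_{m,k}\,a = 0$. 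So both sides are zero. More generally, if $\tau(a)=k' \neq k$ one handles $k'<k$ as above; and if $k'>k$ one uses that $a=\sum_l a\,p_{n,l}$ by Proposition \ref{decompositionOfIdentity} together with Property (2) applied to the various $p_{n,l}$ to see that $a\,p_{n,k}$ picks out only the part of through-degree $\leq k$, and symmetrically $p_{m,k}\,a$ does the same — but I expect the cleanest route is the uniform argument below, which avoids this casework entirely.

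The cleaner approach: use $1_n = \sum_l p_{n,l}$ and $1_m = \sum_l p_{m,l}$ to write
\[
a\,p_{n,k} = 1_m\, a\, p_{n,k} = \sum_{l} p_{m,l}\, a\, p_{n,k},
\qquad
p_{m,k}\, a = p_{m,k}\, a\, 1_n = \sum_{l} p_{m,k}\, a\, p_{n,l}.
\]
So it suffices to show that $p_{m,l}\,a\,p_{n,k}=0$ whenever $l\neq k$, since then both sums collapse to the single term $p_{m,k}\,a\,p_{n,k}$, and the two expressions agree. To see the vanishing: consider $b := a\,p_{n,k}$. By Property (2) applied to $\bar a \in \TL(m,n)$ one gets $p_{m,k'}\,\bar a$ controlled for $\tau(\bar a) < k'$; more directly, factoring $p_{n,k}$ through a Jones--Wenzl projector $p_k$ (Remark \ref{factorobs}), $a\,p_{n,k}$ is a linear combination of diagrams of the form $a\,x\,p_k\,\bar y$, each of through-degree $\leq k$; hence $\tau(a\,p_{n,k})\leq k$. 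Then for $l > k$, Property (2) for $p_{m,l}$ gives $p_{m,l}\,(a\,p_{n,k}) = 0$. For $l < k$: I would instead compute $p_{m,l}\,a\,p_{n,k}$ by first moving to the other factor — note $p_{m,l}\,a$ has through-degree $\leq l < k$, so by Property (2) for $p_{n,k}$ (in its right-hand form), $(p_{m,l}\,a)\,p_{n,k}=0$. This disposes of all $l\neq k$.

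The main obstacle I anticipate is the bookkeeping around through-degree under composition and under the bar operation: one must be careful that Property (2) of Theorem \ref{thmgenproj} is being applied in the correct handedness, and that $\tau(a\,p_{n,k})\le k$ and $\tau(p_{m,l}\,a)\le l$ are justified — both follow from Remark \ref{factorobs} (every $p_{n,k}$ is a combination of diagrams factoring through $p_k$, which has through-degree $k$) together with the fact, recorded in Remark \ref{REMREF}, that through-degree cannot increase under composition. Once those are in hand the argument is purely formal. The pictorial content is exactly the statement that a Jones--Wenzl box of width $k$ can be slid along any diagram $a$ from the $n$-side to the $m$-side, absorbing $a$'s through-strands, and the algebraic proof above is the rigorous shadow of that picture.
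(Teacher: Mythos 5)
Your ``cleaner approach'' is exactly the paper's proof: insert $1_m=\sum_l p_{m,l}$ and $1_n=\sum_l p_{n,l}$ and observe that all cross terms vanish, so both sides collapse to $p_{m,k}\,a\,p_{n,k}$. Your justification of the vanishing (via through-degree bounds from Remark \ref{factorobs} and Property (2) of Theorem \ref{thmgenproj}) is correct and simply spells out the ``simplifying'' step the paper leaves implicit.
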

\begin{proof}
Recall that $1_m = \sum_l p_{m,l}$ and $1_n = \sum_l p_{n,l}$.  Simplifying
the resulting expressions for $1_m a p_{n,k}$ and $p_{m,k} a 1_n$ gives $a
p_{n,k} = p_{m,k} a p_{n,k} = p_{m,k} a$.
\end{proof}

\begin{corollary}
The higher order projectors $p_{n,k}$ are contained in the center, $p_{n,k}\in \eZ(\TL_n)$, of the Temperley-Lieb algebra.
\end{corollary}

\begin{corollary}
If $D\in\TL_n$ and the through-degree $\tau(D) = l$ so that
$D = ba$ where $a\otimes b \in \TL(n,l)\otimes\TL(l,n)$ then
$$p_{n,k} D = p_{n,k} b a = b p_{l,k} a.$$
\end{corollary}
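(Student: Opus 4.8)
The plan is to deduce this directly from Proposition \ref{slidethroughobs}, which already says that the higher order projector $p_{n,k}$ can be slid past any Temperley--Lieb morphism at the cost of adjusting the number of strands it acts on. Concretely, I would start from the given factorization $D = ba$ with $a \in \TL(n,l)$ and $b \in \TL(l,n)$; since composition in $\TL$ corresponds to vertical stacking, this means $p_{n,k} D = p_{n,k} b a$ as an identity in $\TL_n$.

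Next I would apply Proposition \ref{slidethroughobs} to the element $b$, reading that proposition with its ``$n$'' replaced by $l$ and its ``$m$'' replaced by $n$: because $b \in \TL(l,n)$, the proposition yields $b\, p_{l,k} = p_{n,k}\, b$. Substituting this into $p_{n,k} b a$ gives $p_{n,k} b a = b\, p_{l,k}\, a$, which is precisely the asserted chain of equalities $p_{n,k} D = p_{n,k} b a = b\, p_{l,k}\, a$. No further ingredient is needed beyond Proposition \ref{slidethroughobs}.

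I do not expect a serious obstacle; the only care required is bookkeeping. First, the index substitution in Proposition \ref{slidethroughobs} must be applied in the correct direction: it is the ``upper'' factor $b\colon l \to n$ that transports the projector across, converting $p_{l,k}$ on $l$ strands into $p_{n,k}$ on $n$ strands, while the ``lower'' factor $a$ is left untouched. Second, the statement presupposes a fixed factorization $D = ba$ through $l = \tau(D)$ strands; for a single Temperley--Lieb diagram such a factorization exists by Definition \ref{throughdegdef}, and for a linear combination one either runs the argument term by term or simply takes the stated factorization as a hypothesis, so no genuine difficulty arises.
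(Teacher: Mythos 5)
Your proof is correct and is exactly the intended argument: the paper states this corollary as an immediate consequence of Proposition \ref{slidethroughobs}, applied (as you do) to the upper factor $b\in\TL(l,n)$ to convert $p_{n,k}b$ into $b\,p_{l,k}$. Your bookkeeping of the index substitution and of the factorization hypothesis matches the paper's conventions, so nothing is missing.
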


\begin{remark}
This means we can slide a $p_{n,k}$ past some turnbacks onto a fewer number of
strands as long as we change it to a $p_{l,k}$. In pictures,
$$\CPic{randomtlktop2} = \;\;\;\; \CPic{randomtlkthrough}.$$
\end{remark}

We conclude this section with another definition of $p_{n,k}$. This
definition has the value of expressing $p_{n,k}$ in terms of simpler
projectors.

\begin{proposition}\label{indpkobs}
The higher Jones-Wenzl projector $p_{n,k}$ satisfies the recurrence relation:
$$\CPic{nkbox} = \;\;\;\; \sum_{j=1}^{n-1} \frac{[j]}{[j+1]} \CPic{n2kboxes}$$
\end{proposition}
\begin{proof}
Equation \eqref{jwrecureqn} implies that
$$p_n = 1_n - \sum_{j=1}^{n-1} \frac{[j]}{[j+1]} (p_j\sqcup 1_{n-j}) e_j (p_j\sqcup 1_{n-j}).$$
Applying $p_{n,k}$ to both sides of this equation gives:
$$p_{n,k} = \sum_{j=1}^{n-1} \frac{[j]}{[j+1]} p_{n,k} (p_j\sqcup 1_{n-j}) e_j (p_j\sqcup 1_{n-j}),$$
which becomes the desired equation after applying Proposition \ref{slidethroughobs}.
\end{proof}

Iterating this formula expresses $p_{n,k}$ purely in terms of Jones-Wenzl projectors $p_l$.

\section{Categorification of the Temperley-Lieb category} \label{categorified TL section}
In this section we recall Dror Bar-Natan's graphical formulation \cite{DBN}
of the Khovanov categorification \cite{Kh00, KH}. We follow the same
conventions as \cite{CK}.

There is a pre-additive category $\PCob(n)$ whose objects are isotopy
classes of formally $q$-graded Temperley-Lieb diagrams with $2n$ boundary
points. The morphisms are given by $\mathbb{Z}[\alpha]$-linear combinations
of isotopy classes of orientable cobordisms, decorated with dots, and bounded
in $D^2\times [0,1]$ between two disks containing such diagrams. The
\emph{degree} of a cobordism $C : q^i A \to q^j B$ is given by
$$\deg(C) = \deg_{\chi}(C) + \deg_q(C)$$
where the topological degree $\deg_{\chi}(C) = \chi(C) - n$ is given by the
Euler characteristic of $C$ and the $q$-degree $\deg_q(C) = j - i$ is given
by the relative difference in $q$-gradings.  The maps $C$ used throughout
the paper will satisfy $\deg(C) = 0$. The formal $q$-grading will be chosen
to cancel the topological grading. 

When working with chain complexes, every object will also contain a
homological grading and every map will have an associated homological
degree. This homological degree, or $t$-degree, is not part of the definition
$\deg(C)$. We may refer to degree as internal degree in order to
differentiate between degree and homological degree.

We impose the relations below to obtain a new category $\Cob(n)$ as a
quotient of the category $\Mat(\PCob(n))$ formed by allowing direct sums of
objects and maps between them.
$$\CPic{sphere} = 0 \hspace{.75in} \CPic{spheredot} = 1 \hspace{.75in} \CPic{spheredotdot} = 0 \hspace{.75in} \CPic{spheredotdotdot} = \alpha$$
$$\CPic{cyl} = \CPic{cut1} + \CPic{cut2}$$
The dot is determined by the relation that two times a dot is equal to a
handle. When $\alpha = 0$ the cylinder or neck cutting relation implies that closed surfaces
$\Sigma_g$ of genus $g > 3$ evaluate to $0$. In what follows we will let
$\alpha$ be a free variable and absorb it into our base ring ($\Sigma_3=
8\alpha$). One can think of $\alpha$ as a deformation parameter, see
\cite{DBN}.

The categories $\Cob(n)$ fit together in much the same way as the
Temperley-Lieb algebras $\TL_n$. There is an inclusion 
$$ - \sqcup 1_{m-n} : \Cob(n) \to \Cob(m) \conj{ when } n < m$$ 
which is obtained by placing $m-n$ disjoint vertical
line segments to the right to each object.  On morphisms $f\sqcup 1_{m-n}$ is defined to be the union of $f$ and $m-n$ copies of an identity cobordism. If $m=n$ then the empty set is used.

There is a category $\Cob(m,n)$ with objects corresponding to diagrams in 
$\TL(m,n)$, so that $\Cob(n) = \Cob(n,n)$. There is a composition
$$\otimes : \Cob(n,k) \times \Cob(k,l) \to \Cob(n,l)\conj{where} A\times B \mapsto B\otimes A$$
obtained by gluing all diagrams and morphisms along the $k$ boundary points
and $k$ boundary intervals respectively. Pictorially,
\begin{equation}\label{otimeseqn}C\otimes D = \CPic{compsquareCD}.\end{equation}
This composition makes the collection of categories $\Cob(n,k)$ into a
2-category $\Cob$. The relationship between $\Cob$ and the Temperley-Lieb
category $\TL$ can be described using the Grothendieck group functor $\K_0$.

\begin{theorem}
The 2-category $\Cob$ categorifies the Temperley-Lieb category $\TL$. There are isomorphisms,
$$\TL(n,k) \cong \K_0(\Cob(n,k)) \otimes_{\ZZ[q,q^{-1}]} \CC(q) $$
which commute with composition.
\end{theorem}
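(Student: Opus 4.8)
The plan is to prove the isomorphism $\TL(n,k) \cong \K_0(\Cob(n,k)) \otimes_{\ZZ[q,q^{-1}]} \CC(q)$ by identifying an explicit spanning set on the right-hand side, showing it is sent bijectively onto the standard Temperley-Lieb diagram basis on the left, and checking that the identification respects composition. First I would recall that the objects of $\Cob(n,k)$ are, by definition, the formally $q$-graded crossingless matchings of $n+k$ boundary points, i.e. precisely the Temperley-Lieb diagrams that form the $\CC(q)$-basis of $\TL(n,k)$. So there is an obvious candidate map: send the class $[q^0 D] \in \K_0(\Cob(n,k))$ of each diagram $D$ to the corresponding basis element $D \in \TL(n,k)$, and extend $\ZZ[q,q^{-1}]$-linearly using that $[q^i D] = q^i[D]$ in the Grothendieck group of a $q$-graded category.

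The two things to verify are that this assignment is well-defined (the relations imposed in $\K_0$ hold in $\TL$) and that it is an isomorphism after tensoring with $\CC(q)$. For well-definedness, the only relations in the Grothendieck group come from split short exact sequences / direct-sum decompositions of objects in $\Cob(n,k)$; concretely, one must check that whenever a diagram $D$ contains an internal circle, the delooping isomorphism $D \simeq q D' \oplus q^{-1} D'$ (where $D'$ is $D$ with the circle deleted) — which is a consequence of the neck-cutting relation together with the sphere evaluations $\CPic{sphere}=0$, $\CPic{spheredot}=1$ — maps under the candidate assignment to exactly the relation that a disjoint circle equals $q+q^{-1}$ in $\TL$. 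This is the local relation in Definition \ref{TL alg}, so it matches. One should also note that no further relations are introduced: distinct crossingless matchings without circles are non-isomorphic in $\Cob(n,k)$ and remain linearly independent, since any degree-zero cobordism between two such diagrams that is not an isomorphism must factor through a higher-genus piece, which is killed (or becomes $\alpha$-torsion, hence vanishes after inverting nothing but killed by the $q$-grading/degree bookkeeping). For surjectivity after $\otimes \CC(q)$, the images are exactly the diagram basis of $\TL(n,k)$; for injectivity, a dimension/rank count suffices once we know $\K_0(\Cob(n,k))\otimes_{\ZZ[q,q^{-1}]}\CC(q)$ is free on the circle-free diagrams, which the delooping argument delivers.

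Finally, compatibility with composition: the composition $\otimes$ in $\Cob$ is gluing of cobordisms along boundary intervals, which on objects restricts to the gluing of Temperley-Lieb diagrams, possibly creating internal circles; after applying delooping to remove those circles one recovers precisely the Temperley-Lieb composition rule with its powers of $q+q^{-1}$. Since $\K_0$ is a monoidal (here: 2-functorial) invariant, the induced map on Grothendieck groups is a functor, and on representing objects it agrees with the diagrammatic composition in $\TL$. The main obstacle I anticipate is the bookkeeping in the well-definedness step — specifically, making rigorous that $\Cob(n,k)$, or rather $\Kom$-style idempotent-completion issues aside, has Grothendieck group freely generated by the circle-free diagrams with no hidden relations coming from the $\alpha$-deformed sphere relations $\CPic{spheredotdotdot}=\alpha$; one must be careful that introducing $\alpha$ as a free variable does not enlarge $\K_0$ in a way that breaks the rank count. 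This is handled by observing that all such relations are degree-homogeneous and the degree-zero part of every Hom space between circle-free diagrams is spanned by isomorphisms and the zero map.
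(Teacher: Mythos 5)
You should first be aware that the paper does not actually prove this statement: it is recalled from the literature, with the proof deferred to Bar-Natan, Khovanov, and Cooper--Krushkal (\cite{DBN,KH,CK2}), so there is no internal argument to compare yours against. Judged on its own terms, your outline gets the easy half right: delooping (which does hold with the paper's relations, since the twice-dotted sphere is $0$ and the once-dotted sphere is $1$) shows every object of $\Cob(n,k)$ is isomorphic to a direct sum of $q$-shifted circle-free diagrams, so these classes span $\K_0(\Cob(n,k))$, the circle relation matches $q+q^{-1}$, and composition is respected up to delooping. The genuine gap is the injectivity/freeness step. You assert that ``the delooping argument delivers'' that $\K_0\otimes_{\ZZ[q,q^{-1}]}\CC(q)$ is free on the circle-free diagrams; it does not --- delooping gives only a spanning set. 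In the split Grothendieck group the relations are generated by $[A\oplus B]=[A]+[B]$, so $[X]=[Y]$ whenever $X\oplus Z\cong Y\oplus Z$ for some $Z$; to get a basis you must rule out such stable isomorphisms between direct sums of shifted matchings, and the non-isomorphism of two individual diagrams (even if you had proved it) says nothing about this. Your supporting claim that a degree-zero cobordism between distinct circle-free diagrams ``must factor through a higher-genus piece, which is killed (or becomes $\alpha$-torsion\dots)'' is both unsubstantiated and beside the point: it is a statement about morphisms, not about relations in $\K_0$.

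What is actually needed here is one of the standard arguments the cited references supply: either (i) show that the degree-zero endomorphism ring of each $q^iD$ is local (indeed $\ZZ\cdot\mathrm{Id}$, using the grading of dots and of $\alpha$) and that degree-zero Hom spaces between distinct shifted matchings contain no isomorphisms, then invoke a Krull--Schmidt argument to conclude that $\K_0$ is free on the indecomposables $q^iD$; or (ii) exhibit enough $\ZZ[q,q^{-1}]$-valued functionals on $\K_0$ --- for instance the graded ranks of $\Hom(D,-)$, equivalently the image under the TQFT functor to graded modules over Khovanov's rings --- and check that on the classes $[D]$ they reproduce the nondegenerate Temperley--Lieb trace pairing, which forces linear independence after tensoring with $\CC(q)$. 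Without one of these, your ``dimension/rank count'' has nothing to count. A smaller but related issue: you define the map out of $\K_0$ by $[q^0D]\mapsto D$, but a function on isomorphism classes is only well defined once you know, e.g., that $q^iD\cong q^jD'$ forces $i=j$ and $D=D'$ --- which is again part of what is to be proved; it is cleaner to define the comparison map in the other direction, from $\TL(n,k)$ (which is manifestly free on circle-free diagrams) into $\K_0(\Cob(n,k))\otimes_{\ZZ[q,q^{-1}]}\CC(q)$, prove surjectivity by delooping, and then prove injectivity by one of the two routes above.
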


These isomorphisms commute with the compositions \eqref{otimeseqn}. For more detail see \cite{DBN,KH, CK2}.

\begin{definition}
The category of {\em chain complexes of cobordisms} will be denoted by $\Kom(n,m)$.
$$\Kom(n,m) = \Kom(\Cob(n,m))$$
\end{definition}

Unless otherwise stated chain complexes are bounded from below in
homological degree. All chain complexes produced in what follows will have
differentials with components having internal degree zero. Restricting to
the subcategory of chain complexes with degree zero differentials yields a
well-behaved Grothendieck group $\K_0(\Kom(n,k))$, see \cite{CK}.

The category of chain complexes can be enriched to form a differential
graded category.

\begin{definition}\label{komstardef}
There is a differential graded category, $\Kom^*(n,m)$, which has the same
objects as the category $\Kom(n,m)$ but with morphisms that are given by allowing maps of all
homological degrees.
\end{definition}

If $f \in \Morph^m(A, B)$ is a map of homological degree $m$ then $d(f) =
[d, f] \in \Morph^{m+1}(A, B)$. Each collection $\Hom^*(A, B)$, of morphisms
from $A$ to $B$ in $\Kom^*(n,m)$, is a chain complex and the differential
$d$ is a derivation with respect to composition of maps.

\subsection{Grading shifts}
In this section we remind the reader how degree shifts are denoted. Each
chain complex can be shifted in $q$-degree or $t$-degree.

If $A$ is a chain complex then $t A$ will denote the chain complex shifted
in homological degree by $1$,
$$(t A)_i = A_{i-1} \quad\normaltext{ and }\quad d_{tA} = -d_A$$
We will use $q A$ to denote the chain complex satisfying $\deg_q(qB) =
\deg_q(B) + 1$ where $B\in\PCob(n)$ corresponds to a summand of $A$, see
Section \ref{categorified TL section} for a discussion of $q$-degree.

If $C\in\Kom(n,m)$ is a chain complex and $f(q)\in\ZZ[q^{-1}][[q]]$ is a
power series then we will write $\left[ f(q) C\right]$ for an iterated
cone of chain complexes $A_0, A_1,\ldots$ in which $A_i = C$ for all $i$.
The relation
$$[ f(q) C ] = f(q)\cdot [C].$$
holds in the Grothendieck group $\K_0(\Kom(n,m))$.

\begin{definition}\label{boxnotation}
If $C$ is a chain complex of the form $Q_\e$ where $\K_0(Q_\e) = q_\e$ then
we will consistently omit a product of terms of the form $[k]/[k+1]$ from
the bracket notation. Usually,
$$[Q_\e] = [f_\e Q_\e]$$
where $f_\e$ is defined in the proof of Theorem \ref{decompositionOfIdentity}.
\end{definition}

\subsection{Universal projectors}
The most important object in the categories defined above is an idempotent
chain complex $P_n\in\Kom(n)$ which categorifies the Jones-Wenzl projector
$p_n\in \TL_n$.  The chain complexes $P_n$ will be used repeatedly in later
sections in order to construct the projectors $P_\e$ and $P_{n,k}$ that
correspond to the elements $p_\e$ and $p_{n,k}$ introduced in Section
\ref{TL section}.

\begin{theorem}{(\cite{CK})}\label{uprojectorthm}
There exists a chain complex $P_n \in \Kom(n)$ called the \emph{universal
  projector} which satisfies
\begin{enumerate}
\item $P_n$ is positively graded with differential having internal degree zero.
\item The identity diagram appears only in homological degree zero and only
  once.
\item For each diagram $D$ which is not identity, the chain complex $P_n \ott D$ is contractible.
\end{enumerate}
These three properties characterize $P_n$ uniquely up to homotopy.
\end{theorem}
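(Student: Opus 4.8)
The plan is to handle existence and uniqueness separately, reducing in both cases from arbitrary non-identity diagrams to the elementary diagrams $e_i$. The reduction rests on the observation that every Temperley--Lieb diagram $D\in\TL_n$ other than $1_n$ is isotopic to a product $e_{i_1}e_{i_2}\cdots e_{i_r}$ with $r\ge 1$; hence for any $X\in\Kom(n)$ the complex $X\otimes D$ has $X\otimes e_{i_1}$ as a tensor factor, and since the tensor product of a contractible complex with anything is contractible, property~(3) for $P_n$ is equivalent to the finite family of statements $P_n\otimes e_i\simeq 0$, $i=1,\dots,n-1$.

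For existence I would build $P_n$ as a mapping telescope $P_n=\operatorname{hocolim}_k A_k$ of a sequence $1_n=A_0\xrightarrow{f_0}A_1\xrightarrow{f_1}A_2\to\cdots$ in $\Kom(n)$, defined by induction on $k$, arranged so that: each $A_k$ is positively graded with internal-degree-zero differential and contains the identity diagram exactly once, in homological degree $0$; the cone of $f_k$ is a bounded-below complex of non-identity diagrams supported in homological degrees that grow with $k$; and $A_k\otimes e_i$ is acyclic in all homological degrees below some $N(k)$ with $N(k)\to\infty$. The inductive step is the technical heart: one enlarges $A_k$ by coning off the lowest surviving homology of $A_k\otimes e_i$, choosing the attaching map among $\ZZ[\alpha]$-linear combinations of dotted and saddle cobordisms between turnback diagrams; requiring the differential to have internal degree zero dictates the $q$-shifts, and coefficients such as $[k-1]/[k]$ are to be read in $\ZZ[q^{-1}][[q]]$ exactly as in Remark~\ref{remarkring}. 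Granting such a sequence, $P_n$ inherits (1) and (2) because the identity summand, its homological degree and its multiplicity are stable along the telescope, and $P_n\otimes e_i$ is contractible because in each fixed homological degree it agrees with $A_k\otimes e_i$ for $k\gg 0$, hence is a bounded-below acyclic complex of objects of $\Cob(n)$ and therefore contractible (alternatively one carries explicit contracting homotopies for $A_k\otimes e_i$ through the colimit). One could instead obtain such a sequence from iterated full twists in the spirit of Rozansky, but the inductive construction makes the control of the grading and of property~(2) most transparent.

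For uniqueness, suppose $P$ and $P'$ both satisfy (1)--(3); I would prove $P\otimes P'\simeq P$ and $P\otimes P'\simeq P'$, whence $P\simeq P'$. By (1) and (2), $P'$ is a bounded-below complex whose homological degree $0$ term is $1_n\oplus E$ with $E$ a sum of shifted non-identity diagrams, and whose positive-degree terms are sums of shifted non-identity diagrams. Regard $P\otimes P'$ as the totalization of the complex of complexes $\bigl(\cdots\to P\otimes(P')_1\to P\otimes(P')_0\bigr)$: every column $P\otimes(P')_j$ with $j\ge 1$ is contractible by (3) for $P$ together with the reduction above, and the $j=0$ column is $P\oplus\bigl(P\otimes E\bigr)\simeq P$. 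A bounded-below complex of complexes all of whose columns but one are contractible is homotopy equivalent to the surviving column --- a convergent sequence of Gaussian eliminations, legitimate because everything is bounded below --- so $P\otimes P'\simeq P$. Filtering instead by the homological degree of the left factor $P$ and invoking (3) for $P'$ yields $P\otimes P'\simeq P'$ by the identical argument. Hence $P\simeq P'$.

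The main obstacle is the inductive construction in the existence half: arranging that the attaching maps are available with internal degree zero, that positivity and the ``identity appears once in degree zero'' property survive each step, and --- most delicately --- that the acyclicity range $N(k)$ genuinely increases, so that the telescope is an honest projector and not merely an approximation to one. The remaining ingredients --- the reduction to $e_i$, the telescope formalism, and the cancellation argument for uniqueness --- are formal once the construction is in hand.
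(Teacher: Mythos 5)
The first thing to note is that the paper does not prove Theorem \ref{uprojectorthm} at all: it is imported from \cite{CK} (see also \cite{R1}, \cite{FSS}), where existence is established by an explicit recursive construction (the Frenkel--Khovanov complex, whose recursion is what Lemma \ref{standardformlemma} records; Rozansky's infinite full twist gives an alternative), and uniqueness by essentially the tensor-and-filter argument you sketch. Your uniqueness half is fine in outline: filter $P\otimes P'$ by the homological degree of one factor, observe that all but one column is contractible, and collapse --- in the language of this paper this is Lemma \ref{contractLemma} applied to the one-sided twisted complex of columns, and it is the argument of \cite{CK}. One caveat: property (3) as stated kills non-identity diagrams glued on one side only, whereas your two filtrations require both $P\otimes D\simeq 0$ and $D\otimes P'\simeq 0$, i.e.\ killing on both sides. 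In \cite{CK} this is resolved because the explicitly constructed projector kills turnbacks on both sides (it is reflection-symmetric up to homotopy); as written, one of your two cancellations invokes an unproved left-handed version of (3), so you must either establish that version or arrange both filtrations to use the axiom on its stated side against a two-sided model.

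The genuine gap is the existence half. What you give is a plan, not a construction: the inductive step --- enlarging $A_k$ so that $A_{k+1}\otimes e_i$ is acyclic in a strictly larger range, with attaching maps of internal degree zero that preserve positivity and the single occurrence of the identity diagram --- is precisely the content of the theorem, and you offer no argument that such attaching maps exist, nor why coning off the lowest homology of $A_k\otimes e_i$ can be realized by adjoining turnback diagrams and maps to $A_k$ itself. The concluding step, that $P_n\otimes e_i$ agrees degreewise with $A_k\otimes e_i$ for $k\gg 0$ and is therefore a bounded-below acyclic complex, hence contractible, is also not valid as stated: $\Cob(n)$ is additive but not abelian, so acyclicity in whatever ambient sense does not by itself produce a contracting homotopy, and carrying the homotopies for the $A_k\otimes e_i$ through the telescope requires a compatibility with the maps $f_k$ that you have not arranged (note the $A_k\otimes e_i$ are not themselves contractible, so this is not a telescope of contractible complexes). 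The known proofs close exactly this gap by exhibiting an explicit model and verifying (1)--(3) for it directly; some such explicit input, or a genuinely carried-out obstruction-theoretic induction, is needed before your telescope exists at all. The surrounding reductions (non-identity diagrams factor through some $e_i$, and tensoring preserves contractibility) are correct and unproblematic.
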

See also \cite{FSS, R1}. See \cite{Cautis} and \cite{BenW2} for related
ideas. 

We conclude this section with a lemma. This lemma plays an important role in the proof of Theorem \ref{conethingthm}.
\begin{lemma} \label{standardformlemma}
If $P_n \in \Kom(n)$ is a projector then there is a twisted complex
$$\CPic{pn} = \Cone\left(\hspace{-.02in}\CPic{pnm1parjw}\hspace{-.05in} \to t \Bigg[\frac{[n-1]}{[n]}\! \CPic{pnm1sad2} \Bigg]\right)$$
which is also a projector.
\end{lemma}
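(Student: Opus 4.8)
The plan is to check that the twisted complex on the right-hand side --- write $\widetilde P_n$ for it --- satisfies the three characterizing properties of Theorem~\ref{uprojectorthm}; since those properties determine a projector up to homotopy, this shows simultaneously that $\widetilde P_n$ is a projector and that $\widetilde P_n\simeq P_n$. First I would make the cone precise. Let $S$ denote the saddle complex appearing on the right-hand side: it is $P_{n-1}\sqcup 1$ with the turnback $e_{n-1}$ attached along the last two strands at top and bottom, so that, absorbing one copy of $P_{n-1}$ into the other through the turnback and neck-cutting, $S$ is homotopy equivalent to a $q,t$-shift of $P_{n-1}$ living on $n-2$ strands with a cup--cap attached; in particular $S$ is positively graded with degree-zero differential and every diagram underlying $S$ has through-degree $<n$. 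The bracketed term $t\,[\tfrac{[n-1]}{[n]}S]$ is the infinite iterated cone of copies of $S$ provided by the grading conventions of Section~\ref{categorified TL section}, where by Remark~\ref{remarkring} we read $[n-1]/[n]=q-q^{2n-1}+q^{2n+1}-\cdots\in\ZZ[q^{-1}][[q]]$, whose lowest-order term is $q$. The structure maps of the iterated cone, and the connecting map $P_{n-1}\sqcup 1\to t[\tfrac{[n-1]}{[n]}S]$, are built from the degree-zero saddle cobordism on the last two strands and its birth/death dual, chosen so that two consecutive such maps compose --- via neck-cutting and the sphere evaluations of Section~\ref{categorified TL section} --- to a scalar multiple of the identity.

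Properties (1) and (2) are then bookkeeping. For (1): $P_{n-1}$ is positively graded with degree-zero differential by its own property (1), the saddle maps have internal degree zero, and each copy of $S$ in the bracket sits in homological degree $\geq 1$ and is shifted up by a positive power of $q$ (the lowest-order term of $[n-1]/[n]$ being $q^1$); hence $\widetilde P_n$ is again positively graded with degree-zero differential. For (2): the identity diagram $1_n$ has through-degree $n$, whereas every diagram occurring in $S$ --- hence in the whole bracket --- has through-degree $<n$, so $1_n$ occurs only in the summand $P_{n-1}\sqcup 1$, where by property (2) for $P_{n-1}$ it occurs exactly once and only in homological degree zero.

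Property (3) is the crux. For a non-identity diagram $D\in\Cob(n)$ one must show $\widetilde P_n\ott D\simeq 0$, where
\[ \widetilde P_n\ott D=\Cone\Big((P_{n-1}\sqcup 1)\ott D\;\longrightarrow\; t\big[\tfrac{[n-1]}{[n]}\,S\ott D\big]\Big). \]
If the last strand of $D$ is a separate vertical segment, so $D=D'\sqcup 1$ with $D'$ non-identity on $n-1$ strands, then $(P_{n-1}\sqcup 1)\ott D\cong(P_{n-1}\ott D')\sqcup 1$ is contractible by property (3) for $P_{n-1}$, the complex $S\ott D$ contains the same contractible factor, and a cone of contractibles is contractible. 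Otherwise $D$ factors through $e_{n-1}$; then $(P_{n-1}\sqcup 1)\ott D$ is in general \emph{not} contractible, and the role of the cone is that both $(P_{n-1}\sqcup 1)\ott D$ and every copy of $S\ott D$ in the bracket simplify --- by closing $P_{n-1}$ up against the turnback --- to $q,t$-shifts of one and the same complex, so that $\widetilde P_n\ott D$ becomes an infinite iterated cone whose structure maps are, by the composition property of the saddle/birth--death maps recorded above, homotopy equivalences onto successive summands; a Gaussian elimination (convergent, since the $q$-shifts grow and the complexes are bounded below) then collapses the entire tower to $0$. This is the categorical shadow of $e_i p_n=0$ together with the identity $e_{n-1}(p_{n-1}\sqcup 1)e_{n-1}=\tfrac{[n]}{[n-1]}e_{n-1}$, and the coefficient $[n-1]/[n]$ is exactly its inverse, which balances the telescope.

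The main obstacle is the turnback case of property (3): one must identify the simplification of $(P_{n-1}\sqcup 1)\ott e_{n-1}$ and of $S\ott e_{n-1}$ explicitly via neck-cutting and the sphere relations, and then verify that consecutive structure maps of the bracket compose, up to homotopy and the scalar dictated by $[n-1]/[n]$, to the identity, so that the infinite Gaussian elimination is legitimate and converges. Once this is settled, properties (1)--(3) hold and Theorem~\ref{uprojectorthm} finishes the argument. The most economical alternative is simply to recall that $P_n$ is constructed in \cite{CK} (see also \cite{R1,FSS}) as precisely an infinite twisted complex of this shape, whereupon the lemma follows at once from the uniqueness clause of Theorem~\ref{uprojectorthm}.
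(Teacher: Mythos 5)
Your overall strategy---build the right-hand side by hand and then invoke the uniqueness clause of Theorem \ref{uprojectorthm}---is reasonable in outline, but as written it has a genuine gap exactly where the content of the lemma lies. The object on the right-hand side is not yet defined in your argument: you never actually produce the differentials of the bracket $t\,[\tfrac{[n-1]}{[n]}S]$ nor the connecting map out of $P_{n-1}\sqcup 1$, you only assert that they can be ``chosen so that two consecutive such maps compose to a scalar multiple of the identity.'' Taken literally this is inconsistent with the complex being a (one-sided) twisted complex: consecutive components of the differential must square to zero up to the Maurer--Cartan corrections of Equation \eqref{MCeqn}, and a nonzero multiple of the identity between the (non-contractible) terms $S$ cannot be a boundary. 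What you presumably mean is that the induced maps become equivalences \emph{after} closing against the turnback $e_{n-1}$; but that is precisely the categorified partial-trace computation (the chain-level analogue of $e_{n-1}(p_{n-1}\sqcup 1)e_{n-1}=\tfrac{[n]}{[n-1]}e_{n-1}$), and you explicitly defer it as ``the main obstacle.'' Since property (3) in the turnback case, the convergence of the infinite Gaussian elimination, and even the well-definedness of the twisted complex all hinge on it, the proof is not complete. (A smaller slip: your dichotomy ``last strand vertical / otherwise $D$ factors through $e_{n-1}$'' is not a dichotomy---a non-identity diagram whose last strand is not vertical may have its only relevant cap strictly among the first $n-1$ points, e.g.\ top cap at $(1,2)$ on three strands, and such $D$ need not factor through $e_{n-1}$; the correct split is by where the top caps of $D$ meet the projector.)

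Your ``economical alternative'' does not rescue this, because it mischaracterizes \cite{CK}: the complex constructed there is the Frenkel--Khovanov complex, built from the Frenkel--Khovanov recursion, and it is \emph{not} literally of the Wenzl-recursion shape asserted in Lemma \ref{standardformlemma}---that is exactly why the lemma requires an argument. The paper's proof takes the opposite route from yours: it starts from that explicit Frenkel--Khovanov model of $P_n$, tensors it with $(P_{n-2}\sqcup 1_2)\otimes e_{n-1}\otimes (P_{n-1}\sqcup 1)$, and then contracts the parts of the subcomplex in which projectors meet turnbacks (using that such compositions are contractible, cf.\ Theorem \ref{uprojectorthm} and Lemma \ref{contractLemma}); what remains is a twisted complex of the stated standard form which is homotopy equivalent to $P_n$ by construction, so no fresh verification of the axioms is needed. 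To salvage your version you would have to carry out the turnback computation you postponed, or else reduce to an explicit model as the paper does.
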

\begin{proof}
The proof follows from tensoring the Frenkel-Khovanov complex of \cite{CK} for $P_n$ with $(P_{n-2} \sqcup 1_2) \otimes e_{n-1} \otimes (P_{n-1} \sqcup 1_1)$ then contracting portions of the subcomplex consisting of projectors containing turnbacks.
\end{proof}

\section{Twisted complexes and operations on twisted complexes}

\subsection{Twisted complexes}\label{twistedcomsec}
In this section we recall the definition of the category $\Tw \aA$ of
twisted complexes over a differential graded category $\aA$, see
\cite{BK,OTT}. The reader may assume that $\aA = \Kom^*(n,m)$, see
Definition \ref{komstardef}.

Our main construction in Section \ref{mainconstructionsec} will occur in the
category of twisted complexes. Informally, the definitions in this section
codify situations in which the objects of study are chain complexes $M$ with
a decreasing filtration
$$M = F^0 M\supseteq F^1 M \supseteq F^2 M \supseteq \dots $$ 
and a splitting $F^{i+1}(M) = G_{i+1} \oplus F^i(M)$ as graded objects. Maps
are required to respect this filtration.

The definitions presented here are variations on standard ones which allow
one to work with categories of twisted complexes that are unbounded and
indexed by countable sets (such as $\ZZ_+$). This is accomplished
by requiring that maps are lower triangular, see Definition \ref{totdef}
below.

\begin{definition}%{(twisted complex)}
A \emph{twisted complex} over $\aA$ is a collection
$$\{ (E_i), q_{ij} : E_i \to E_j \} \conj{where} i\in \ZZ_{+}$$
consisting of objects $E_i \in \aA$ and maps $q_{ij}$ of degree $1$ which satisfy $q_{ij} = 0$ for $i \geq j$ and the equation
\begin{equation}\label{MCeqn}
(-1)^j d_{\aA} (q_{ij}) + \sum_{k} q_{kj} \circ q_{ik} = 0.
\end{equation}
\end{definition}

\begin{definition}
  Twisted complexes which satisfy the condition that $q_{ij} = 0$ when $i
  \geq j$ are called \emph{one-sided}. 
\end{definition}

\begin{remark}
  All infinite twisted complexes will be one-sided. The resolution of
  identity $R_n$ in Theorem \ref{pespexistencethm} is a one-sided twisted
  complex.
\end{remark}

It will be convenient later to use ordered sets besides $\ZZ_+$ to index
components of twisted complexes. In particular, the set of sequences $\aL_n$
together with the dominance order described in Definition
\ref{domOrderstuff} will be used throughout
Section \ref{mainconstructionsec}. In general, it will be clear from context
when this is done.

\begin{definition}{($\Tw\aA$)}
The one-sided twisted complexes form a differential graded category.  If $A
= \{(A_i), a_{ij}\}$ and $B = \{(B_i), b_{ij}\}$ then degree $k$ maps are
those that intertwine the diagrams formed by $A$ and $B$,
$$\Morph^k_{\Tw\aA}(A,B) = \prod_{i\leq j} \Morph^{k+i-j}_{\Kom^*(n,m)}(A_i, B_j).$$
In other words, morphisms $f : A \to B$ are collections $\{f_{ij}\}$ of maps
having the appropriate degree which satisfy $f_{ij} = 0$ unless $i\leq
j$. The composition of morphisms is defined in terms of components by the
equation,
$$(f\circ g)_{ij} = \sum_{i\leq k \leq j} f_{kj} \circ g_{ik}.$$
If $f\in\Hom^*(A,B)$ is given by $\{f_{ij}\}$ then the equation
$$(d f)_{ij} = (-1)^j d_{\aA} (f_{ij}) + \sum_k b_{kj} \circ f_{ik} - (-1)^{\vert f\vert} f_{kj}\circ q_{ik}$$
determines a differential which makes $\Tw\aA$ into a differential graded category.
\end{definition}

The categories $\Tw\Kom^*(n,m)$ are examples of pre-triangulated categories.
Pre-triangulated categories can be seen as an alternative to triangulated
categories because every such category $\aA$ yields a triangulated category
$H^0(\aA)$, see \cite{BK}.

If $\{E_i\}\subset\Kom(\aC)$ is a collection of non-negatively graded chain
complexes then as graded objects, $\prod_{i\geq 0} t^i E_i \cong
\bigoplus_{i\geq 0} t^i E_i$ since the direct product is finite in each
degree. This allows us to flatten each twisted complex $A = \{(A_i),
a_{ij}\}$ to a chain complex $\Tot(A)$ by summing together the individual
components $A_i$ of $A$.

\renewcommand{\matrix}[1]{\begin{bmatrix}#1\end{bmatrix}}
\begin{definition}\label{totdef}
If $\aA = \Kom(\aC)$ is the category of non-negatively graded chain complexes over an additive category $\aC$  then there is a dg functor 
$$\Tot:\Tw\aA \rightarrow \aA$$
from twisted complexes to complexes defined on objects $\{(E_i), q_{ij}\}\in\Tw\aA$ by
\[
\Tot(\{(E_i), q_{ij}\}) = \left\{\bigoplus_{i\geq 0}t^i E_i\ ,\  d\right\} \conj{where} d = \left(\begin{array}{cccc} d_{E_0} & & &  \\ q_{01} & -d_{E_1} & & \\ q_{02} & q_{12} & d_{E_2}  & \\ \vdots & \vdots & \vdots  &\ddots \end{array} \right) 
\]
and on morphisms $f = \{f_{ij}\}$ by
\[
\Tot(f) = \left(\begin{array}{cccc} f_{00} & & & \\ f_{01} & f_{11} & & \\ f_{02} & f_{12} & f_{22}  & \\ \vdots & \vdots & \vdots  &\ddots \end{array} \right).
\]

\end{definition}

\begin{remark}
The condition that $d_{\Tot(A)}^2 = 0$ is implied by Equation \eqref{MCeqn} above.  
\end{remark}

\begin{remark}
The functor $\Tot$ defined above preserves homotopy equivalences. In
particular, for all $X,Y\in\Tw\aA$,  $X\simeq Y \Rightarrow \Tot(X)\simeq \Tot(Y)$.
\end{remark}

\begin{remark}\label{totremark}
If $\aC$ is an additive category which contains countable direct products
and $\aA$ is the differential graded category of possibly unbounded chain
complexes over $\aC$ then we can define a dg functor 
$$\Tot^\Pi : \Tw\aA \rightarrow \aA$$
using the formulas above with $\oplus$ replaced by $\Pi$.
\end{remark}

\begin{definition}{(Convolution)}\label{convolutiondef}
If a chain complex $A$ is the total complex of some twisted complex $\{(E_i),
q_{ij}\}$ then we say $A$ is a \emph{convolution} of $\{ (E_i), q_{ij}\}$.
\end{definition}

\begin{example}\label{twistedcomplexexample}
In Lemma \ref{combinTheHairs} the twisted complex $T$ pictured below is considered.
\begin{center}
\begin{tikzpicture}[scale=10, node distance=2.5cm]
\node (A1) {$A$};
\node (B1) [right=1cm of A1] {$t B$};
\node (C1) [right=1cm of B1] {$t^2 C$};
\node (D1) [right=1cm of C1] {$t^3 D$};
\draw[->] (A1) to node {$\alpha$} (B1);
\draw[->] (B1) to node {$\beta$} (C1);
\draw[->] (C1) to node {$\gamma$} (D1);
\draw[->, bend left=50] (A1) to node  {$\rho$} (C1);
\draw[->, bend right=50] (A1) to node [swap] {$\eta$} (D1);
\draw[->, bend left=50] (B1) to node {$\sigma$} (D1);
\end{tikzpicture}
\end{center}
Each object $A$, $B$, $C$ and $D$ is a chain complex. The convolution $\Tot(T)$ is the chain complex 
$A \oplus t B \oplus t^2 C \oplus t^3 D$
with differential
$$d_{\Tot(T)} = \left(\begin{array}{cccc} d_A & & & \\ \alpha & -d_B & & \\ \rho & \beta & d_C & \\ \eta & \sigma & \gamma & -d_D \end{array}\right).$$
This twisted complex is one-sided with respect to the order of the letters
appearing in the alphabet.
\end{example}

The notion of hull defined below formalizes the idea of the subcategory of
all chain complexes built out of iterated extensions of elements of some
fixed set of chain complexes. 

\begin{definition}\label{hulldef}
If $\aE = \{A_1,\ldots, A_r\} \subset \Kom(\aC)$ is a collection of chain
complexes then the \emph{hull} $\inp{\aE}\subset \Kom(\aC)$ is the smallest
strictly full additive subcategory containing each $A_i$ and closed under
convolution.

In particular, if $\{(E_i), q_{ij}\}\in\Tw\Kom(\aC)$ satisfies $E_i \in \inp{\aE}$ for all $i\in\ZZ_+$ then $\Tot(\{(E_i), q_{ij}\}) \in \inp{\aE}$.
\end{definition}

% If $\aE \subset \aA$ is a collection of
% objects then the Grothendieck group of the hull of $\aE$ is the span of
% $\K_0(\aE)$ in the Grothendieck group of $\aA$,
% $$\K_0(\inp{\aE}) = \inp{\K_0(\aE)}.$$

\begin{definition}\label{conedef}
Suppose that $A = \{(A_i), a_{ij}\}$ and $B = \{ (B_i), b_{ij} \}$ are twisted
complexes and $f = \{ f_{ij} \} : A \to B$ is a degree zero cycle then the
\emph{cone} of $f$ is the twisted complex given by
$$\Cone(f) = \left\{ (A_i\oplus B_{i-1})\ ,\ \left(\begin{array}{cc} a_{ij} & 0 \\ f_{i,j-1} & - b_{i-1,j-1} \end{array}\right)\right\}.$$
\end{definition}

\begin{remark}
The condition that $\Cone(f)$ is a twisted complex is equivalent to the requirement that $f$ is a degree zero cycle in the definition above.  
\end{remark}

\begin{remark}
If $A,B$ are twisted complexes over a category of chain complexes and $f :
A\rightarrow B$ a degree zero cycle then 
$$\Tot(\Cone(f)) = \Cone(\Tot(f)).$$
\end{remark}

\begin{definition}\label{truncdef}
Suppose that $Y=\{(Y_i), y_{ij}\}$ is a twisted complex indexed by $\ZZ_+$. For each $r,s\in\ZZ_+$ with $r\leq s$ the $[r,s]-$\emph{truncation} of $Y$ is given by
$$Y_{[r,s]} = \{(T_i), t_{ij}\}$$ 
where $T_i = Y_i$, $t_{ij} = y_{ij}$ when $i,j \in [a,b]$ and  $T_i = 0$, $t_{ij} = 0$ when $i,j \not\in [r,s]$.
\end{definition}

The lemma below says that a twisted complex is determined by its
truncations and each truncation is an iterated mapping cone. For simplicity
of notation, we restrict to $\ZZ_+$-indexed twisted complexes over the
categories $\Kom^*(n)$.

\begin{lemma}\label{truncObs}
If $\{(E_i), q_{ij}\}$ is a twisted complex over $\Kom^*(n)$ then for each integer $s\geq 0$,
\[
\Tot(Y_{[0,s]}) = \Cone(\Tot(Y_{[0,s-1]})\buildrel \d\over\rightarrow t^{s-1} E_s),
\]
where $\d = \left(\begin{array}{cccc} q_{0,s} & q_{1,s} & \dots & q_{s-1,s} \end{array}\right)$
is a chain map of degree zero. Conversely, if we have chain complexes $C_s$
and maps $\d_s:C_s\rightarrow t^s E_{s+1}$ such that $C_{s+1} = \Cone(\d_s)$
then there is a unique twisted complex $Y$ such that $C_s =
\Tot(Y_{[0,s]})$.
\end{lemma}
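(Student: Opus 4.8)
The plan is to prove both assertions by unwinding the definition of $\Tot$ (Definition~\ref{totdef}), of the truncation $Y_{[0,s]}$ (Definition~\ref{truncdef}), and of the mapping cone of chain complexes, keeping careful track of the grading shifts and signs; the two directions will turn out to be mutually inverse.

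For the forward direction, fix $s\geq 0$ and write $Y=\{(E_i),q_{ij}\}$. First note that $Y_{[0,s]}$ is again a one-sided twisted complex, since its Maurer--Cartan equations are exactly those of $Y$ indexed by pairs $i,j\leq s$; hence $\Tot(Y_{[0,s]})$ is defined, and as a graded object it equals $\bigoplus_{i=0}^{s}t^iE_i=\Tot(Y_{[0,s-1]})\oplus t\cdot(t^{s-1}E_s)$, which is the underlying graded object of $\Cone\big(\Tot(Y_{[0,s-1]})\to t^{s-1}E_s\big)$. Inspecting the differential matrix in Definition~\ref{totdef}, $d_{\Tot(Y_{[0,s]})}$ is block lower triangular with top-left $s\times s$ block $d_{\Tot(Y_{[0,s-1]})}$, bottom-right entry $(-1)^sd_{E_s}=d_{t^sE_s}$, and bottom row $\delta=(q_{0,s},q_{1,s},\dots,q_{s-1,s})$; this is precisely the cone differential. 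It then remains to verify that $\delta$, regarded as a map $\Tot(Y_{[0,s-1]})\to t^{s-1}E_s$, has degree zero — a bookkeeping check, since the $(i,s)$-component $q_{i,s}$ has internal degree $1+i-s$, which after the shifts by $t^i$ on the source and $t^{s-1}$ on the target becomes $0$ — and is a chain map. The latter is exactly the family of Maurer--Cartan equations \eqref{MCeqn} with $j=s$; equivalently, it is the vanishing of the off-diagonal block of $d_{\Tot(Y_{[0,s]})}^2$, which holds by the Remark following Definition~\ref{totdef}.

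For the converse, construct $Y=\{(E_i),q_{ij}\}$ by induction on $s$, with the recursion forced at each stage by $C_s=\Tot(Y_{[0,s]})$. Put $E_0:=C_0$, so $\Tot(Y_{[0,0]})=C_0$. Given $E_0,\dots,E_s$ and $q_{ij}$ for $i,j\leq s$ with $\Tot(Y_{[0,s]})=C_s$, the map $\delta_s:C_s\to t^sE_{s+1}$ determines the new object $E_{s+1}$ (desuspend the target) and, via $C_s=\bigoplus_{i=0}^{s}t^iE_i$, its components desuspend to maps $q_{i,s+1}:E_i\to E_{s+1}$ of internal degree $1+i-(s+1)$; set $q_{s+1,j}:=0$. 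Then $C_{s+1}=\Cone(\delta_s)$ has underlying graded object $\bigoplus_{i=0}^{s+1}t^iE_i$ and differential matrix agreeing with the $\Tot$ formula for $Y_{[0,s+1]}$, so $\Tot(Y_{[0,s+1]})=C_{s+1}$; and the Maurer--Cartan equations for the enlarged data that involve the index $s+1$ are exactly the assertion that $\delta_s$ is a chain map, i.e.\ $d_{C_{s+1}}^2=0$, which holds because $C_{s+1}$ is a chain complex. Thus $\{(E_i),q_{ij}\}_{i,j\leq s+1}$ is a twisted complex, and the union over all $s$ is the required $Y$. Uniqueness follows from the construction: $E_0$ must be $C_0$, and at step $s+1$ the object $E_{s+1}$ is forced to be the new graded summand of $C_{s+1}=C_s\oplus t^{s+1}E_{s+1}$, while the $q_{i,s+1}$ are forced to be the new column of off-diagonal entries of $d_{C_{s+1}}$.

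I do not expect a genuine obstacle here: each direction is a routine unwinding of the definitions and they are inverse to one another. The one point requiring care is reconciling the various sign conventions — the $(-1)^j$ in \eqref{MCeqn}, the alternating signs $(-1)^id_{E_i}$ on the diagonal of the $\Tot$ differential, the sign in Definition~\ref{conedef}, and the convention $d_{tA}=-d_A$ — but these are pinned down by checking the cases $s=1$ and $s=2$ by hand, after which the general pattern is evident.
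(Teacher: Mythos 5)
Your proof is correct: the paper states Lemma \ref{truncObs} without proof, treating it as an immediate consequence of Definitions \ref{totdef}, \ref{truncdef} and \ref{conedef}, and your definition-unwinding — the block decomposition of the $\Tot$ differential, the degree count showing $\d$ is a degree-zero chain map, and the identification of the chain-map condition with the $j=s$ instances of Equation \eqref{MCeqn} — is exactly the intended argument. The inductive construction in the converse direction, with $E_0=C_0$ and the $q_{i,s+1}$ read off as the components of $\d_s$, together with the observation that these choices are forced, is likewise the expected verification.
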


Recall that an object $E$ in a dg category $\aA$ is \emph{contractible} when
the map $\Id_{E}$ is a boundary in the mapping space $\Endo_{\aA}(E)$. The next lemma is a useful tool
for showing that certain filtered chain complexes are contractible.

\begin{lemma}\label{contractLemma}
  If $\{E_i\}\subset \aA$ is a collection of contractible objects then each
  twisted complex $\{(E_i),q_{ij}\}$ is contractible.
\end{lemma}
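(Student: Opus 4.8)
The plan is to reduce the statement to the fact that a twisted complex with contractible components can be contracted ``one layer at a time'' using the truncation structure from Lemma \ref{truncObs}. Write $Y = \{(E_i), q_{ij}\}$ and, for each $i$, fix a contracting homotopy $h_i \in \Endo^{-1}_{\aA}(E_i)$ with $d_{\aA}(h_i) = \Id_{E_i}$. First I would treat the bounded case: for a truncation $Y_{[0,s]}$, argue by induction on $s$ that $\Tot(Y_{[0,s]})$ is contractible. The base case $s=0$ is just the contractibility of $E_0$. For the inductive step, Lemma \ref{truncObs} gives $\Tot(Y_{[0,s]}) = \Cone(\Tot(Y_{[0,s-1]}) \xto{\d} t^{s-1}E_s)$; since both the source and target of $\d$ are contractible (the source by induction, the target because $E_s$ is), and the cone on a map between contractible objects is contractible, we conclude $\Tot(Y_{[0,s]})$ is contractible. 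The only thing to verify here is the elementary fact that $\Cone(f)$ is contractible whenever its two ``ends'' are contractible, which follows by writing down the standard block homotopy $\begin{bmatrix} h_A & 0 \\ h_B f h_A & h_B \end{bmatrix}$ (up to signs) and checking $d$ of this equals the identity; I would spell out the signs but not belabor them.

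For the unbounded case one must produce a single homotopy on $\Tot(Y)$ itself. The cleanest route is to observe that the components $q_{ij}$ of the twisted complex are strictly lower triangular, so the matrix $d_{\Tot(Y)} = D + N$ decomposes as the ``diagonal'' part $D = \bigoplus_i (\pm d_{E_i})$ and a strictly lower-triangular nilpotent-in-each-degree part $N = (q_{ij})$. Let $H = \bigoplus_i h_i$ be the diagonal homotopy contracting $D$, so $DH + HD = \Id$. Then set $h = H(\Id + NH)^{-1} = H - HNH + HNHNH - \cdots$; this sum is finite in each internal-and-homological degree because $Y$ is one-sided and each $E_i$ is bounded below, so the infinite geometric series converges component-wise. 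A direct computation — using $d_{\Tot(Y)} h + h\, d_{\Tot(Y)} = (D+N)H(\Id+NH)^{-1} + H(\Id+NH)^{-1}(D+N)$ and the identities $DH+HD = \Id$, together with telescoping — shows this equals $\Id_{\Tot(Y)}$, so $\Tot(Y)$ is contractible, hence $Y$ is contractible in $\Tw\aA$.

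Alternatively, and perhaps more in the spirit of the surrounding text, I would phrase the unbounded case as a ``limit of truncations'' argument: the homotopies $h$ built on $\Tot(Y_{[0,s]})$ in the bounded case can be chosen compatibly with the truncation maps $\Tot(Y_{[0,s-1]}) \hookrightarrow \Tot(Y_{[0,s]})$ (by taking the one produced by the explicit cone formula above, which only modifies the new block), and then assemble into a homotopy on the colimit $\Tot(Y) = \mathrm{colim}_s \Tot(Y_{[0,s]})$; one-sidedness guarantees this colimit is computed degreewise and that the assembled $h$ is a well-defined morphism in $\Tw\aA$.

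I expect the main obstacle to be bookkeeping rather than ideas: getting the signs in the block homotopy formula consistent with the sign conventions in Definitions \ref{totdef} and \ref{conedef} (note the $-d_{E_i}$ in alternate rows and the $(-1)^j$ in the Maurer--Cartan equation \eqref{MCeqn}), and, in the unbounded case, carefully justifying the convergence of the series $H(\Id + NH)^{-1}$ in each fixed degree — which hinges precisely on one-sidedness plus bounded-below-ness of the $E_i$, the hypotheses built into the definition of $\Tw\aA$ here. Neither of these is deep, but both need to be stated cleanly for the lemma to be usable (as it will be) in proving contractibility of composites like $P_\e \ott P_\d$ later.
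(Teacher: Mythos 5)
The paper offers no proof of Lemma \ref{contractLemma} at all --- it is stated as standard, followed only by the remark about when $\Tot$ makes sense --- so there is nothing to match your argument against; what matters is whether your proof is correct, and it is. Both of your routes work. For the explicit homotopy: writing $d_{\Tot(Y)}=D+N$ with $D$ the diagonal part and $N=(q_{ij})$ strictly triangular, and $H$ the diagonal homotopy with $DH+HD=\Id$, the identity $d h + h d = \Id$ for $h=H(\Id+NH)^{-1}$ does hold with \emph{no} side conditions on the chosen $h_i$: the computation reduces, via $DN+ND+N^2=0$, to the commutator identity $AD-DA=-AN$ for $A=(\Id+NH)^{-1}$, which gives $H(AD-DA)=B-\Id$ with $B=(\Id+HN)^{-1}$, exactly what is needed; so your claim that a direct computation closes the argument is justified, not just hoped for. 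Two small refinements are worth making explicit. First, the convergence of the series needs only one-sidedness, not bounded-below-ness of the $E_i$: the component of $h$ from $E_i$ to $E_j$ receives contributions from at most the terms with at most $j-i$ factors of $N$, so each $h_{ij}$ is a finite sum; since every $h_{ij}$ vanishes unless $i\leq j$, the element $h$ is a genuine degree $-1$ morphism in $\Tw\aA$ with $d(h)=\Id$, which proves the lemma as stated (contractibility of the twisted complex itself), whereas bounded-below-ness (or countable products, as in the paper's remark following the lemma) is only needed when you pass to $\Tot$. Second, your truncation-plus-colimit variant is fine and fits the paper's Lemma \ref{truncObs} machinery, but note that the cone-by-cone homotopies you build are exactly the truncations of the explicit $h$ above (up to sign), so the "compatibility with truncation maps" you invoke is automatic rather than an extra choice. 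Your remaining caveats (signs against Definitions \ref{totdef} and \ref{conedef}) are genuinely just bookkeeping.
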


\begin{remark}
One subtlety to keep in mind is that the corresponding result for chain
complexes only holds in situations where convolution $\Tot$ is defined,
e.g. over a category of non-negatively graded chain complexes \emph{or} a
category of, possibly unbounded, chain complexes over an additive category
containing countable direct products.
\end{remark}

The following theorem says that the dg subcategory of $\aA$ determined by
the hull of $\aE$ is controlled by the dg algebra of $\Hom$-spaces between
objects in $\aE$.

\begin{theorem}{(\cite{BK})}\label{modulethmdef}
If $\aE$ is a collection of objects in a pre-triangulated category $\aA$ then the category of differential graded modules over the algebra
$$E = \bigoplus_{i,j} \Hom^*(E_i,E_j)$$
is equivalent to the category of $\inp{\aE}$.
\end{theorem}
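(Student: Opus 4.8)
The plan is to identify $\inp{\aE}$ with the subcategory of ``free'' dg $E$-modules via a dg Yoneda functor, and then to hit every dg $E$-module by resolving it through such frees. Concretely, I would first fix $E=\bigoplus_{i,j}\Hom^*(E_i,E_j)$ and build the dg functor
$$\Phi\co\aA\longrightarrow\mathrm{Mod}\text{-}E,\qquad \Phi(X)=\bigoplus_i\Hom^*_\aA(E_i,X),$$
with $E$ acting by precomposition and a map $f\co X\to Y$ acting by postcomposition. By inspection (the dg Yoneda lemma) $\Phi(E_j)$ is the representable module $P_j$, and $\Hom^*_{\mathrm{Mod}\text{-}E}(P_i,P_j)\cong\Hom^*_\aA(E_i,E_j)$, so $\Phi$ is fully faithful on the full dg subcategory spanned by the objects of $\aE$. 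Since $\aA$ is pre-triangulated, $\inp{\aE}$ is closed under shifts and under the cone of Definition \ref{conedef}; and $\Phi$, being additive and $\Hom$-valued, commutes with shifts and carries the cone of a twisted complex to the degreewise cone of modules. Hence $\Phi$ restricts to a dg functor $\inp{\aE}\to\mathrm{Mod}\text{-}E$ whose essential image is the iterated convolutions of shifts of the $P_i$.

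Next I would show this restricted functor is a quasi-equivalence onto its essential image. For \emph{full faithfulness}, fix $Y\in\inp{\aE}$ and let $\mathscr{C}_Y$ be the class of $X\in\inp{\aE}$ for which $\Phi$ induces a quasi-isomorphism $\Hom^*_\aA(X,Y)\to\Hom^*_{\mathrm{Mod}\text{-}E}(\Phi X,\Phi Y)$. By the above, every $E_i$ lies in $\mathscr{C}_Y$; because $\Hom^*_\aA(-,Y)$ and $\Hom^*_{\mathrm{Mod}\text{-}E}(\Phi(-),\Phi Y)$ both send cones to distinguished triangles, the five lemma makes $\mathscr{C}_Y$ closed under shifts and cones, and a limit argument — using that in the one-sided, lower-triangular setting of $\Tot$ (Definition \ref{totdef}) and $\Tot^\Pi$ (Remark \ref{totremark}) these $\Hom$-complexes commute with the relevant countable products — promotes this to closure under the convolutions of Definition \ref{hulldef}. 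Thus $\mathscr{C}_Y=\inp{\aE}$, and the symmetric bootstrap in the variable $Y$ (presenting $Y$ as a convolution through the truncation formula of Lemma \ref{truncObs}, starting from the $E_i$) completes full faithfulness.

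Finally, for \emph{essential surjectivity} I would take, for an arbitrary dg $E$-module $M$, a semifree (equivalently, bar) resolution: an exhaustive increasing filtration whose successive quotients are direct sums of shifts of the frees $P_j$, which is precisely the data of a one-sided twisted complex with each component lying in $\Phi(\inp{\aE})$. Applying $\Tot$ — or $\Tot^\Pi$ in the unbounded case — and invoking Definition \ref{hulldef}, the associated convolution of shifts of the $E_j$ lies in $\inp{\aE}$ and is carried by $\Phi$ to a module quasi-isomorphic to $M$; taking $H^0$ then yields the equivalence (keeping the enhancement yields a quasi-equivalence). The hard part will be the infinite convolutions: one must arrange the resolution so that $\Tot$ (resp. $\Tot^\Pi$) is honestly defined on it and its total complex really lands in $\inp{\aE}$, and one must verify that mapping complexes out of such convolutions decompose as the expected countable products on both sides — the one-sidedness built into Definition \ref{totdef} and the contractibility criterion of Lemma \ref{contractLemma} are exactly the tools that make these checks go through.
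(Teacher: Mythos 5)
The paper offers no proof of this statement; it is imported directly from Bondal--Kapranov \cite{BK}, so there is nothing internal to compare your argument with. Your outline is the standard one behind that citation: the dg Yoneda functor sending $X$ to $\bigoplus_i\Hom^*(E_i,X)$, full faithfulness on the generators, and a bootstrap over shifts, cones and one-sided convolutions. The full-faithfulness half is sound under the paper's conventions (components are non-negatively graded complexes and twisted complexes are $\ZZ_+$-indexed and lower triangular), since then the relevant products are finite in each homological degree and the comparison of $\Hom$-complexes of totalizations is an honest quasi-isomorphism.

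The gap is in essential surjectivity as you state it. You claim every dg $E$-module is hit, by choosing a semifree resolution and transporting it back to a convolution of shifts of the $E_j$. But a semifree resolution of an arbitrary dg module in general requires arbitrary (possibly uncountable) direct sums of shifts of the representables, shifts unbounded in both directions, and exhaustive filtrations that need not be $\ZZ_+$-indexed or one-sided in the sense of Definition \ref{totdef}; the hull $\inp{\aE}$ of Definition \ref{hulldef} is closed under none of these operations, as it only allows countable one-sided convolutions of objects of the ambient category, which here are complexes bounded below. So the object you would need on the $\aA$-side need not exist in $\inp{\aE}$, and the literal claim ``all dg $E$-modules'' cannot be established by this route. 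The correct target --- and the one actually used later, e.g.\ in Theorem \ref{dgathm}, where only finitely many projectors occur --- is the subcategory of dg $E$-modules generated by the representable modules under shifts, cones and the corresponding one-sided convolutions (equivalently, semifree modules whose filtration matches the paper's notion of convolution). With that reading your argument closes up with no resolution step at all: $\Phi$ carries a convolution to the semifree module with the same lower-triangular differential, and conversely every such module arises this way.
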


% One is interested in applying this theorem in cases when $\Hom$-spaces
% between objects in the collection $\aE$ satisfy nice properties. It is
% common to ask for a condition such as the one defined below.

% \begin{definition}\label{exceptionalcoldef}
% A family of objects $\aE = \{ E_1, E_2, \ldots, E_n \}$ is an \emph{exceptional collection} if
% $$\Morph^*(E_i,E_j) \simeq 0 \quad\normaltext{ for }\quad i> j.$$
% \end{definition}

% \begin{remark}
%   % If $\aE$ is an exceptional collection and $\inp{\aE} = \aA$ then the
%   % differential algebra $A = \bigoplus_{i\leq j} \Hom^*(E_i, E_j)$ controls
%   % the category $\aA$, in the sense that $A\module \simeq \aA$. 
% Theorem \ref{dgathm} applies this theorem with the higher order projectors: $\aP^n = \{ P_{n,n}, P_{n,n-2},
%   P_{n,n-4},\ldots\}$. 
% \end{remark}

\subsection{Operations on twisted complexes}\label{ellemmasec}
In this section we introduce a number of lemmas and notations which will be used repeatedly
throughout Sections \ref{explicitsec} and \ref{mainconstructionsec}.

We will use the following proposition in Theorem \ref{conethingthm}
to construct the chain complexes $P_\e$.

\begin{proposition}{(Obstruction Theory)}\label{obstprop}
For each pair of chain maps $\alpha : A \to B$ and $\beta : B \to C$ there is
a chain map $\gamma : \Cone(\alpha) \to tC$ of the form $\gamma =
\left(\begin{array}{cc} -h & \beta\end{array}\right)$ where
  $\beta\circ\alpha = d_C \circ h$if and only if $\beta \circ \alpha \simeq
  0$.
Moreover, if $\Hom^*(A,C)\simeq 0$ then $\beta\circ\alpha \simeq 0$ and the map $\gamma$ is unique up to homotopy.
\end{proposition}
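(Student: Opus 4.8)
The plan is to unwind the cocycle condition on $\gamma$ into its two matrix components and to recognize the resulting constraint as the assertion that $\beta\circ\alpha$ is a boundary in the mapping complex $\Hom^*(A,C)$. As a graded object, $\Cone(\alpha)$ has two summands, one a (shifted) copy of $A$ and one a (shifted) copy of $B$, and by Definition~\ref{conedef} its differential is lower triangular with off-diagonal component $\alpha$; the target $tC$ carries the differential $-d_C$. A degree-zero graded map $\gamma\colon\Cone(\alpha)\to tC$ is therefore determined by two components, which I would name $-h$ on the $A$-summand and $\beta$ on the $B$-summand, so that $\gamma=\left(\begin{array}{cc}-h&\beta\end{array}\right)$ exactly as in the statement. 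Expanding the equation $d(\gamma)=0$ and using that $d$ is a derivation with respect to composition, the $B$-component of this equation records precisely that $\beta$ is a chain map $B\to C$ (true by hypothesis), while the $A$-component reads $\beta\circ\alpha=d(h)$ in $\Hom^*(A,C)$, i.e. that $h$ is a nullhomotopy of $\beta\circ\alpha$. Hence a chain map $\gamma$ of the stated form exists if and only if $\beta\circ\alpha$ is a boundary in $\Hom^*(A,C)$, equivalently $\beta\circ\alpha\simeq 0$; this is the ``if and only if''.

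For the ``moreover'', assume $\Hom^*(A,C)\simeq 0$, so this mapping complex is acyclic. Because $\alpha$ and $\beta$ are chain maps and $d$ is a derivation, $d(\beta\circ\alpha)=d(\beta)\circ\alpha\pm\beta\circ d(\alpha)=0$, so $\beta\circ\alpha$ is a degree-zero cycle in $\Hom^*(A,C)$; by acyclicity it is a boundary, so $\beta\circ\alpha=d(h)$ for some $h$, that is $\beta\circ\alpha\simeq 0$, and the first part supplies the map $\gamma$. For uniqueness up to homotopy, suppose $\gamma=\left(\begin{array}{cc}-h&\beta\end{array}\right)$ and $\gamma'=\left(\begin{array}{cc}-h'&\beta\end{array}\right)$ are two maps of this form; then $d(h)=d(h')=\beta\circ\alpha$, so $h-h'$ is a cycle in $\Hom^*(A,C)$ and hence $h-h'=d(k)$ for some $k$. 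A short check against the explicit cone differential shows that, with the correct sign, the degree $-1$ map $s=\left(\begin{array}{cc}\pm k&0\end{array}\right)\colon\Cone(\alpha)\to tC$ satisfies $d(s)=\gamma-\gamma'$, so $\gamma\simeq\gamma'$. (Conceptually this is the long exact sequence obtained by applying $\Hom^*(-,tC)$ to the triangle $A\xrightarrow{\alpha}B\to\Cone(\alpha)\to tA$: when $\Hom^*(A,C)\simeq 0$, restriction to the $B$-summand is a quasi-isomorphism $\Hom^*(\Cone(\alpha),tC)\simeq t\Hom^*(B,C)$, so $\gamma$ is pinned down up to homotopy by its $B$-component.)

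The only real obstacle is sign bookkeeping: one must match the conventions of the shift $t$, of the lower-triangular cone differential of Definition~\ref{conedef}, and of the mapping-complex differential $[d,-]$, so that the $A$-component of $d(\gamma)=0$ comes out exactly as $\beta\circ\alpha=d_C\circ h$ (with $d_C\circ h$ read as the mapping-complex differential applied to $h$) and so that the homotopy $s$ in the uniqueness step carries the right sign. No step is conceptually hard; all the content is in organizing the cocycle and coboundary conditions inside $\Hom^*(\Cone(\alpha),tC)$ correctly.
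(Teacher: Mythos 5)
Your argument is correct, but it proceeds differently from the paper. The paper works in the triangulated homotopy category: it takes the exact triangle $A \xrightarrow{\alpha} B \to \Cone(\alpha)$, applies $\Morph(-,C)$ to get the long exact sequence, reads off existence of $\gamma$ from exactness when $\alpha^*(\beta)=0$, gets uniqueness from exactness on the other side, and then merely asserts ("one can check") that the resulting map has the explicit matrix form $\left(\begin{array}{cc}-h&\beta\end{array}\right)$. You instead work entirely at the chain level: you expand the cocycle condition $d(\gamma)=0$ against the lower-triangular cone differential, identify its $A$-component as $\beta\circ\alpha=d(h)$ in $\Hom^*(A,C)$, and then use acyclicity of that mapping complex for both existence (a cycle is a boundary) and uniqueness (two nullhomotopies differ by a boundary $d(k)$, which assembles into an explicit homotopy $s=\left(\begin{array}{cc}\pm k&0\end{array}\right)$ between the two lifts); your parenthetical remark about $\Hom^*(\Cone(\alpha),tC)\simeq t\Hom^*(B,C)$ is exactly the paper's long-exact-sequence mechanism. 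What your route buys is that the "of the form $\gamma=(-h\ \ \beta)$" clause and both directions of the "if and only if" fall out of the computation rather than being asserted afterward, at the cost of the sign bookkeeping you flag (which is routine but should be done once carefully, since the shift $t$ and the cone convention of Definition~\ref{conedef} both contribute signs); what the paper's route buys is brevity and the immediate availability of the same argument for any functor-level statement about the triangle. Both proofs are valid, and your uniqueness step is, if anything, slightly more explicit than the paper's.
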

\begin{proof}
The associated homotopy category $\Ho(\Kom)$ is triangulated. There is an exact triangle,
$A \to B \to \Cone(\alpha)$. Applying the functor $\Morph(-,C)$ yields a long exact sequence,
$$\cdots \to \Morph^i(\Cone(\alpha),C) \to \Morph^i(B,C) \to \Morph^i(A,C) \to \cdots. $$
If $\beta\circ\alpha \simeq 0$ then $\alpha^*(\beta) = 0$ and exactness
implies the existence of $\gamma$. One can check that $\gamma$ is given by
the map above between chain complexes. Uniqueness of $\gamma$ is implied by
exactness on the other side.
\end{proof}

\begin{remark}
The uniqueness of the lifts $\gamma$ in the proposition above will guarantee
that there is exactly one choice at each stage in the construction of the
projectors $P_\e$, see Theorem \ref{conethingthm} and Corollary
\ref{pespexistenceprop}.
\end{remark}

It will be useful to add a contractible chain complex to a chain complex
using the $\Cone$ construction.

\begin{lemma}{(Substitution)}\label{addcontractiblelemma}
Let $A,B,C,D$ be chain complexes and $f:B\rightarrow D$ a chain map. Then we have:
\begin{center}
\begin{tikzpicture}[scale=10, node distance=2.5cm]
\node (A1) {$A$};
\node (B1) [right=1cm of A1] {$B$};
\node (C1) [right=1cm of B1] {$C$};
\draw[->] (A1) to node {$\alpha$} (B1);
\draw[->] (B1) to node {$\beta$} (C1);
\draw[->, bend left=50] (A1) to node {$\sigma$} (C1);
\node (P1) [right=.25cm of C1] {$\simeq$};
\node (A2) [right=.25cm of P1] {$A$};
\node (B2) [right=1cm of A2] {$\Cone(f)$};
\node (C2) [right=1cm of B2] {$C$};
\node (D2) [below=1cm of B2] {$D$};
\draw[->, bend left=50] (A2) to node {$\sigma$} (C2);
\draw[->] (A2) to node {$\gamma$} (B2);
\draw[->] (B2) to node {$\delta$} (C2);
\draw[->, bend right=25] (A2) to node[swap] {$f\circ \alpha$} (D2);
\draw[right hook->] (D2) to node  {$\zeta$} (B2);
\end{tikzpicture}
\end{center}
where $\gamma = \left(\begin{array}{c} \alpha \\ 0 \end{array}\right)$, 
$\delta = \left(\begin{array}{cc} \beta & 0 \end{array}\right)$ and $\zeta = \left(\begin{array}{c} 0 \\ \Id \end{array}\right)$.
\end{lemma}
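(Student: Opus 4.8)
The plan is to exhibit the homotopy equivalence directly by writing down mutually inverse chain maps between the two twisted complexes and checking they compose to the identity up to an explicit homotopy, rather than by invoking any abstract machinery. The right-hand complex is, by the definition of $\Cone$, the twisted complex with objects $A$, $\Cone(f) = B\oplus tD$, $C$ (with $D$ sitting inside $\Cone(f)$ via $\zeta$), and I want to compare it to the left-hand complex with objects $A$, $B$, $C$.

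First I would verify that the right-hand diagram is actually a legitimate twisted complex, i.e. that the displayed maps satisfy the Maurer--Cartan equation \eqref{MCeqn}; this amounts to checking that $\delta\circ\gamma = \beta\circ\alpha = \sigma$ (so the curved $\sigma$-arrow is the one forced by the lower-triangular data) and that the differential of $\Cone(f)$ together with the component $f\circ\alpha: A\to D$ and $\zeta: D\to \Cone(f)$ are compatible — this is immediate from $\gamma = \left(\begin{smallmatrix}\alpha\\0\end{smallmatrix}\right)$, $\zeta = \left(\begin{smallmatrix}0\\ \Id\end{smallmatrix}\right)$, and the matrix form of $d_{\Cone(f)} = \left(\begin{smallmatrix} d_B & 0\\ f & -d_D\end{smallmatrix}\right)$. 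Then I would write the forward map $\Phi$ from the left complex to the right complex as $\Id_A$ on $A$, $\gamma = \left(\begin{smallmatrix}\alpha\\0\end{smallmatrix}\right)$ composed appropriately — more precisely $\Phi$ has identity components on $A$ and $C$, the inclusion $B\hookrightarrow B\oplus tD$ on the $B$-term, and suitable off-diagonal components dictated by requiring $d\Phi = 0$; and the backward map $\Psi$ with identity components on $A$ and $C$ and the projection $B\oplus tD \to B$ on the middle term.

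The key computation is that $\Psi\circ\Phi = \Id$ on the nose (the composite of inclusion and projection on $B\oplus tD$ is the identity on $B$), while $\Phi\circ\Psi$ differs from the identity on the right-hand twisted complex only in the $tD$ summand, where it is zero rather than the identity; the difference is killed by the obvious homotopy $H$ supported in the $tD$-$tD$ slot, using that $tD$ is an acyclic direct summand "up to the connecting data" — concretely, $H$ is built from $\Id_{tD}$ and one checks $dH + Hd = \Id - \Phi\Psi$ using that the contribution of $f\circ\alpha$ and $\zeta$ cancels. This is exactly the pattern of Lemma \ref{addcontractiblelemma}'s statement, so the content is a bookkeeping check that the twisted-complex differential behaves as claimed; I would present it by giving $\Phi$, $\Psi$, $H$ in matrix form with respect to the decomposition $A\oplus B\oplus tD\oplus C$ of the convolution and quoting the remark that $\Tot$ preserves homotopy equivalences, so it suffices to produce the homotopy equivalence at the level of twisted complexes.

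The main obstacle is purely organizational: getting the signs and the $t$-shift right in the matrix entries of $d_{\Cone(f)}$, $\Phi$, $\Psi$, and $H$, since the twisted-complex conventions in Definition \ref{totdef} and Definition \ref{conedef} carry $(-1)^j$ factors and the homological shift negates differentials. I expect no conceptual difficulty — the equivalence is essentially "adding the acyclic complex $\Cone(\Id_D)$ in a filtered way" — so the write-up would be a short explicit verification, and I would flag at the start that all maps are as displayed in the statement with $\sigma = \beta\alpha$ and that $D$ enters only through the mapping cone, making the equivalence a special case of the general principle that convolving with a contractible object does not change homotopy type.
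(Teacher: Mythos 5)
The paper states this lemma without proof (it is treated as routine bookkeeping), so the only question is whether your verification plan is itself correct. Your strategy --- explicit mutually inverse maps plus a homotopy, i.e.\ splitting off the contractible piece coming from $D$ --- is the right one, but your reading of the right-hand side contains a genuine error. The right-hand diagram is a twisted complex with \emph{four} objects $A$, $D$, $\Cone(f)$, $C$: the node $D$ is a separate entry, and $\zeta=\left(\begin{smallmatrix}0\\ \Id\end{smallmatrix}\right)$ and $f\circ\alpha$ are components of the twisted differential, not an indication that $D$ merely ``sits inside $\Cone(f)$''. This is exactly how the lemma is used in Sections \ref{threestrandssec} and \ref{fourstrandssec}: one glues the contractible complex $\Cone(-\Id)=D\to tD$ onto the given complex and reassociates, so both a free copy of $D$ and the $tD$ summand of $\Cone(f)$ are present. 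Consequently the convolution of the right-hand side is, up to shifts, $A\oplus tD\oplus tB\oplus t^{2}D\oplus t^{2}C$ --- it contains \emph{two} copies of $D$ --- whereas you work with ``the decomposition $A\oplus B\oplus tD\oplus C$'', which has only one. Under your three-object reading the statement is false (take $A=C=0$: it would assert $B\simeq\Cone(f)$ for an arbitrary $f$), and your proposed homotopy cannot exist: an $H$ ``supported in the $tD$--$tD$ slot and built from $\Id_{tD}$'' has $dH+Hd$ equal in that slot to $\pm(d_{D}k+kd_{D})$ plus lower-triangular terms, so killing $\Id_{tD}$ this way would require $\Id_{D}$ to be nullhomotopic.

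The correct homotopy instead couples the two copies of $D$. Take $\Psi$ to be the projection ($\Id_{A}$, $\Id_{C}$, $\Cone(f)\twoheadrightarrow B$, zero on the node $D$) and $\Phi$ the inclusion augmented by the component $f:B\to D$ into the separate node; this extra component is the ``off-diagonal term dictated by $d\Phi=0$'' you allude to, and it is forced because $\gamma$ alone is not a cycle: $d\gamma=\left(\begin{smallmatrix}0\\ \pm f\alpha\end{smallmatrix}\right)$ is cancelled by $\zeta\circ(f\alpha)$, which is also why the $A\to D$ arrow must be $f\circ\alpha$. Then $\Psi\Phi=\Id$ strictly, and $\Id-\Phi\Psi=dH$ where $H$ has the single degree $-1$ component $(0,\Id_{D}):\Cone(f)\to D$, i.e.\ from the $tD$ summand of the cone to the separate node $D$ --- not a self-map of one $tD$. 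Finally, the parenthetical ``$\delta\circ\gamma=\beta\circ\alpha=\sigma$'' is wrong: $\sigma$ is independent data of degree one lower, constrained only by the Maurer--Cartan equation $d\sigma\pm\beta\alpha=0$; what your compatibility check actually needs is just $\delta\gamma=\beta\alpha$, which holds and shows the $\sigma$-component satisfies the same equation on both sides. With these corrections your outline becomes the routine verification the paper leaves implicit.
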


The following corollary is an inductive version of the previous lemma.

\begin{corollary}{(Inductive Substitution)}\label{substitution}
Suppose that $A = \{(A_i), q_{ij}\}$ is a one-sided twisted complex and there are maps $f_i:A_i\rightarrow C_i$. Then $A \simeq \{ (C_i \hookrightarrow  B_i), p_{ij} \}$ where $B_i = \Cone(f_i)$.
\end{corollary}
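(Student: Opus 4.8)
The plan is to bootstrap from Lemma \ref{addcontractiblelemma} (Substitution) by applying it one index at a time, together with the truncation description of twisted complexes provided by Lemma \ref{truncObs}. I would proceed by building, for each $s\geq 0$, a homotopy equivalence
\[
\Tot\bigl(A_{[0,s]}\bigr) \;\simeq\; \Tot\bigl(\{(C_i\hookrightarrow B_i),p_{ij}\}_{[0,s]}\bigr)
\]
that is compatible with the truncation maps, and then pass to the colimit (equivalently, invoke that $\Tot$ commutes with the relevant (co)limits and preserves homotopy equivalences, as recorded in the remarks following Definition \ref{totdef}). The base case $s=0$ is just the statement that $A_0 \simeq \Cone(f_0)$ by adjoining the contractible complex $C_0$; this is the one-object instance of Lemma \ref{addcontractiblelemma}, where in the notation of that lemma one takes the ``$B$'' there to be $A_0$ and the ``$D$'' to be $C_0$ with $f = f_0$.

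For the inductive step, suppose the equivalence has been constructed through index $s-1$. By Lemma \ref{truncObs}, $\Tot(A_{[0,s]}) = \Cone\bigl(\Tot(A_{[0,s-1]}) \xrightarrow{\ \d\ } t^{s-1}A_s\bigr)$ with $\d = (q_{0,s}\ q_{1,s}\ \cdots\ q_{s-1,s})$. Now I would apply Lemma \ref{addcontractiblelemma} in the following configuration: let ``$A$'' of the lemma be $\Tot(A_{[0,s-1]})$, let ``$B$'' be $A_s$, let ``$D$'' be $C_s$, let ``$C$'' be the zero complex (or rather, observe we only need the left half of the diagram), and let $f = f_s : A_s \to C_s$. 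The lemma then replaces $A_s$ by $\Cone(f_s) = B_s$ inside the cone, changing the attaching map $\d$ by precomposition with the inclusion and adjusting the components involving $C_s$ — exactly the data $p_{ij}$ dictated by the cone construction of Definition \ref{conedef}. Feeding the inductive equivalence $\Tot(A_{[0,s-1]})\simeq \Tot(\{(C_i\hookrightarrow B_i)\}_{[0,s-1]})$ into the ``$A$'' slot and using that $\Cone$ of homotopy equivalent maps are homotopy equivalent yields the equivalence at stage $s$.

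The main obstacle, and the place requiring genuine care rather than formal nonsense, is the \emph{compatibility} of these stagewise equivalences with the truncation/inclusion maps: to conclude anything about the untruncated twisted complexes one needs the square relating stage $s-1$ and stage $s$ to commute (up to coherent homotopy), and one needs the resulting system of maps $\{f_{ij}\}$ to genuinely assemble into a morphism of one-sided twisted complexes — i.e. to verify the degree-zero-cycle condition of Definition \ref{conedef} at every index simultaneously, and that the off-diagonal homotopies ``$h$'' produced by Proposition \ref{obstprop}/Lemma \ref{addcontractiblelemma} at each stage fit into the lower-triangular pattern without interfering. Once the bookkeeping is set up so that each application of Lemma \ref{addcontractiblelemma} only modifies the $s$-th row and column, the contributions are disjoint and the assembly is automatic; the honest content is checking that the maps $p_{ij}$ inherit $(dp)_{ij}=0$ from Equation \eqref{MCeqn} for $A$, which is a direct componentwise computation using that each $C_i\hookrightarrow B_i$ is a split inclusion with contractible cokernel-free complement, so I would only sketch it. Finally, taking $s\to\infty$ and applying Lemma \ref{contractLemma} (to see that the ``error terms'' built from the contractible $C_i$ do not contribute) gives $A\simeq\{(C_i\hookrightarrow B_i),p_{ij}\}$ as twisted complexes.
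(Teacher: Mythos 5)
Your proposal is correct and is essentially the paper's intended argument: the paper gives no written proof, presenting the statement as "an inductive version" of Lemma \ref{addcontractiblelemma}, and your stagewise application of that lemma along the truncations of Lemma \ref{truncObs} --- together with the observation that each stage only modifies the $s$-th row and column, so the process stabilizes degreewise and passes to the limit exactly as in the paper's other stable constructions --- is that induction spelled out. One phrasing slip worth fixing: the base case is $A_0 \simeq \Tot(C_0 \hookrightarrow \Cone(f_0))$ rather than $A_0 \simeq \Cone(f_0)$, the adjoined contractible object is $\Cone(\Id_{C_0})$ (not $C_0$ itself), and no homotopies from Proposition \ref{obstprop} are needed anywhere, since Lemma \ref{addcontractiblelemma} supplies explicit maps.
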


Note that we may have $C_i = 0$ for some $i$ in the above corollary.

The next lemma allows us to remove arrows between objects in a twisted
complex when the $\Hom$-spaces between these objects is contractible (as in
Theorem \ref{homComputation}).

\begin{lemma}{(Combing)}\label{combinTheHairs}
If $\beta \in \Morph^*(B,C)$ is a boundary then 
\begin{center}
\begin{tikzpicture}[scale=10, node distance=2.5cm]
\node (A1) {$A$};
\node (B1) [right=1cm of A1] {$B$};
\node (C1) [right=1cm of B1] {$C$};
\node (D1) [right=1cm of C1] {$D$};
\draw[->] (A1) to node {$\alpha$} (B1);
\draw[->] (B1) to node {$\beta$} (C1);
\draw[->] (C1) to node {$\gamma$} (D1);
\draw[->, bend left=50] (A1) to node  {$\rho$} (C1);
\draw[->, bend right=50] (A1) to node [swap] {$\eta$} (D1);
\draw[->, bend left=50] (B1) to node {$\sigma$} (D1);
\node (P1) [right=.25cm of D1] {$\cong$};
\node (A2) [right=.25cm of P1] {$A$};
\node (B2) [right=1cm of A2] {$B$};
\node (C2) [right=1cm of B2] {$C$};
\node (D2) [right=1cm of C2] {$D$};
\draw[->] (A2) to node {$\alpha$} (B2);
\draw[->] (C2) to node {$\gamma$} (D2);
\draw[->, bend left=50] (A2) to node  {$\rho + h\alpha$} (C2);
\draw[->, bend right=50] (A2) to node [swap] {$\eta$} (D2);
\draw[->, bend left=50] (B2) to node  {$\sigma - \gamma h$} (D2);
\end{tikzpicture}
\end{center}
In words, we may remove $\beta$ from the right-hand side. However, in doing
so, we perturb the differential by arrows which factor through $\alpha$ or
$\gamma$. Note that, if $\Morph^*(B,C) \simeq 0$ then every cycle $f \in \Morph^*(B,C)$ is a boundary.
\end{lemma}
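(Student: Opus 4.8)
The plan is to produce an explicit \emph{invertible} degree-zero morphism of twisted complexes from the left-hand diagram to the right-hand one; being lower-triangular and unipotent, such a morphism is automatically an isomorphism, which is exactly what the symbol ``$\cong$'' in the statement asserts. Let $h\in\Morph^*(B,C)$ be a morphism with $\beta=d(h)$, which exists by hypothesis. Order the four objects as $A<B<C<D$, and recall that a morphism of twisted complexes on this ordered set is a lower-triangular array of components. Define $\Phi$ to be the identity on each of $A,B,C,D$, with one additional component, the map $h\colon B\to C$, and zero in every other off-diagonal slot. Since $\Phi=\Id+(\text{strictly lower triangular})$ and the array is finite, the off-diagonal part is nilpotent, so $\Phi$ is invertible, with $\Phi^{-1}$ obtained by replacing $h$ by $-h$.

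Next I would check that $\Phi$ is a degree-zero cycle in the $\Hom$-complex of $\Tw\aA$ relating the left-hand twisted complex (with differential $d$) to the right-hand one (with differential $d'$), and in the process read off what $d'$ must be. Equivalently, after applying $\Tot$ as in Example~\ref{twistedcomplexexample}, this is the matrix identity $\Phi\,d=d'\,\Phi$ for the corresponding total differentials. The computation is direct and purely formal once one tracks the Koszul signs from the differential on $\Tw\aA$: the $(B,C)$-component of the cycle condition forces the $B\to C$ arrow of the target to equal $\beta-d(h)=0$, which is the only place the hypothesis is used; the $(A,C)$-component forces the $A\to C$ arrow to be $\rho+h\alpha$; and the $(B,D)$-component forces the $B\to D$ arrow to be $\sigma-\gamma h$. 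Every remaining component — the arrows $\alpha,\gamma,\eta$ and the internal differentials — is unchanged, and the Maurer--Cartan equation \eqref{MCeqn} for the target follows from that for the source together with the cycle condition for $\Phi$. Thus $\Phi$ witnesses the asserted isomorphism.

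For the last sentence of the statement, note that in a one-sided twisted complex indexed by $A<B<C<D$ there is no index strictly between $B$ and $C$, so the quadratic term of \eqref{MCeqn} in the $(B,C)$-slot vanishes and $\beta$ is automatically a cycle; hence if $\Morph^*(B,C)\simeq 0$, then $\beta$ is a boundary and the hypothesis is met. The main point requiring care in the whole argument is the sign bookkeeping, which is precisely what pins down the signs $+h\alpha$ and $-\gamma h$ of the two new arrows; there is no conceptual obstacle beyond this.
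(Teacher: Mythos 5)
Your proposal is correct and follows essentially the same route as the paper: both take the homotopy $h$ with $\beta=d(h)$, form the unipotent map $\Id+h$ in the $(B,C)$-slot (the paper's $\varphi^{\pm 1}$), note it is invertible, and observe that transporting the differential by it kills the $B\to C$ arrow while producing exactly the perturbations $\rho+h\alpha$ and $\sigma-\gamma h$. Your added remark that $\beta$ is automatically a cycle (no intermediate index in the Maurer--Cartan equation), so that contractibility of $\Morph^*(B,C)$ makes it a boundary, is a correct justification of the paper's final note.
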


\begin{proof}
Since $\beta$ is a boundary there exists a homotopy $h : B \to C$ such that $\beta = dh - hd$. This allows us to define maps $\varphi = \varphi^1, \varphi^{-1}$  where
$$\varphi^{\pm 1} = \left(\begin{array}{cccc} \Id_A & & & \\  & \Id_B & & \\ & \pm h & \Id_C & \\  & & & \Id_D \end{array}\right).$$
Notice that $\varphi \varphi^{-1} = \Id$ and $\varphi^{-1} \varphi =
\Id$. If $d$ is the differential on the left-hand side then $\varphi d_{\Tot(T)} \varphi^{-1}$ is the differential on the right-hand side.
\end{proof}

\section{Computing spaces of maps using duality}\label{homspacessec}
In this section we recall a duality for $\Hom$-spaces inside of the category
$\Kom(n,m)$. For each sequence $\e\in\aL_n$, chain level analogues
$Q_\e\in\Kom(n)$ of the elements $q_\e\in\TL_n$ found in Section
\ref{genjwprojsec} are introduced. In Theorem \ref{homComputation} the
duality statement is used to prove that $\Hom$-spaces between convolutions
of $Q_\e$ and $Q_\nu$ respect the dominance order.

\subsection{Duality}\label{dualitysubsec}
\begin{notation}
Denote by $\Kom(n)^b$ the subcategory of $\Kom(n)$ consisting of chain
complexes which are bounded on both sides in homological degree.
\end{notation}

\begin{definition}{$(C^\vee)$}\label{reflectdef}
  If $C \in \Kom(n)^b$ then the {\em dual} complex $C^\vee$ is obtained by
  reflecting all of the diagrams in the chain complex about the $x$-axis and
  reversing both the quantum and homological gradings.
\end{definition}

\begin{remark}
When defining the invariants of tangles which live in
$\Kom(n)^b$ (see \cite{DBN,KH}) the chain complex associated to a negative
crossing can be obtained from the chain complex associated to a positive
crossing by applying this functor.
\end{remark}

\begin{remark}
One can show that $(C^{\vee})^{\vee} \cong C$ and that $-^\vee :
\Kom(n)^b\to\Kom(n)^b$ preserves homotopy. 
\end{remark}

Our primary interest in $-^\vee$ stems from its behavior with respect to the
pairing:
$$\Kom(n)^b\times\Kom(n)^b \to \Kom(0)^b \conj{where} (A,B) \mapsto \Hom^*(A,B).$$
The computation of $\Hom$-spaces in $\Kom(n)^b$ can be simplified using the planar algebra trick which is illustrated below. 
$$\Hom^*\left(\CPic{pnm1sadblankrev},\CPic{pnm1sadblank2}\right) \cong \CPic{pnm1sadblankpnm1sadblank2}$$

The boxes above represent choices of chain complexes in $\Kom(n)^b$ so that
on the left-hand side of this equation is the chain complex of maps between
the two objects in the differential graded category $\Kom^*(n)^b$.  On the
right-hand side of this equation is the chain complex in $\Kom(0)^b$ formed
by dualizing the first of the two objects and then connecting its free end
points to those of the second object. This is identified with the chain
complex of abelian groups on the left after applying the functor
$\Hom(\varnothing, -)$.

The theorem below contains a precise statement of the general case.

\begin{theorem}\label{dualthm}
For all $A,B \in \Kom(n)^b$, the Markov trace of the chain complex $B\otimes
A^\vee \in \Kom(n)^b$ computes the extended $\Hom$-space from $A$ to $B$.
$$\Hom^*(A,B) \cong \tr(B\otimes A^\vee)$$
\end{theorem}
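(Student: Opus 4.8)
The plan is to reduce the statement to the ``planar algebra trick'' picture that was sketched just before the theorem, and then to verify it at the level of generators. First I would recall that a chain complex $C\in\Kom(n)^b$ is a bounded-both-sides complex whose chain groups are formal direct sums of $q$-graded Temperley--Lieb diagrams in $\Cob(n)$, and that $\Hom^*(A,B)$ is computed componentwise: in homological degree $k$ it is $\prod_{j-i=k}\Hom_{\Cob(n)}(A_i,B_j)$, with the differential induced by $d_A$ and $d_B$. So it suffices to establish, naturally in both variables and compatibly with the differentials and gradings, an isomorphism of cobordism hom-groups
$$\Hom_{\Cob(n)}(D_1,D_2)\;\cong\;\tr(D_2\otimes D_1^\vee)$$
for single $q$-graded diagrams $D_1,D_2$ with $2n$ boundary points, where $\tr$ denotes the Markov (planar) trace that closes up the $n$ through-strands, and then to assemble these isomorphisms over the chain groups. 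The grading bookkeeping is exactly the content of the degree conventions in Section \ref{categorified TL section}: reflecting $D_1$ about the $x$-axis and negating its $q$- and $t$-gradings, then gluing to $D_2$ and taking the trace closure, turns a degree-zero cobordism $D_1\to D_2$ into a closed (well, $\Kom(0)^b$-valued) cobordism of the correct total degree, because $\chi$ is additive under gluing and the $-n$ normalization in $\deg_\chi$ is matched by the trace closure adding $n$ cups.

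The heart of the argument is the adjunction that makes ``bending a strand'' an isomorphism on hom-spaces in $\Cob$. Concretely, in the 2-category $\Cob$ each object $n$ is self-dual: the cup and cap $1$-morphisms in $\Cob(n,0)$ and $\Cob(0,n)$ exhibit $n$ as its own adjoint, with unit and counit given by the evident ``zig-zag'' cobordisms, and the zig-zag (snake) identities hold on the nose in $\Cob$ because the relevant cobordisms are isotopic rel boundary. From this self-duality one gets, for each pair of $1$-morphisms, a natural bijection $\Hom_{\Cob(n)}(D_1,D_2)\cong\Hom_{\Cob(0)}(\varnothing, D_2\otimes D_1^\vee\text{ closed up})$, i.e. exactly the planar-algebra picture displayed before the theorem; applying $\Hom(\varnothing,-)$ identifies the right-hand side with the chain complex $\tr(B\otimes A^\vee)\in\Kom(0)^b$ once we pass from single diagrams to complexes. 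I would phrase this as: the functor $-^\vee$ together with the closure maps realizes the hom-pairing $\Kom(n)^b\times\Kom(n)^b\to\Kom(0)^b$ as the composite of $\mathrm{id}\times(-^\vee)$, the monoidal product $\otimes$ into $\Kom(n)^b$, and the Markov trace $\tr:\Kom(n)^b\to\Kom(0)^b$; naturality and compatibility with differentials are then inherited from naturality of the snake isomorphisms in $\Cob$ and the fact that $-^\vee$ and $\tr$ are dg functors.

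The main obstacle I anticipate is purely the sign/grading coherence rather than the topological content: one must check that the isomorphism intertwines the differential on $\Hom^*(A,B)$ (which involves $d_B\circ(-)\pm(-)\circ d_A$ with Koszul signs) with the internal differential of the closed-up complex $\tr(B\otimes A^\vee)$, and that reversing the homological grading on $A$ in forming $A^\vee$ produces exactly the sign $\pm d_A$ appearing on the off-diagonal of the hom-complex differential. This is the same kind of check one performs to see that $\Cone$ and $-^\vee$ interact correctly, and I would handle it by verifying the identity on a two-term complex (a single map $A_0\to A_1$) and on morphisms of homological degree $0$ and $1$, then invoking additivity to conclude. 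The boundedness hypothesis $A,B\in\Kom(n)^b$ is used to ensure all the products defining $\Hom^*$ and all the direct sums defining the closure are finite in each degree, so that $\tr(B\otimes A^\vee)$ genuinely lies in $\Kom(0)^b$ and no completion subtleties arise.
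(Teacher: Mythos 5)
Your proposal is correct and follows essentially the same route as the paper: the paper's (very terse) proof likewise establishes the correspondence at the level of individual diagrams and cobordisms between them (the canonical ``bent'' element in the closure, i.e.\ the planar algebra trick), and then notes that this correspondence commutes with assembling diagrams into complexes (convolutions), hence respects the differentials. Your version simply makes explicit the cup/cap adjunction and the sign/grading bookkeeping that the paper leaves implicit.
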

\begin{proof}
If $f : D\to D'$ is a map between diagrams $D$ and $D'$ in $\Cob(n)$ then
there is a canonical way to construct an element of $D'^\vee \otimes
D$. This commutes with convolutions and so respects the differential.
\end{proof}

\begin{remark}
  This duality has been explored in \cite[Thm 1.3]{CMW}, in \cite{R2} this
  duality was denoted by $-^\Diamond$ and in \cite{Cautis} it is denoted by
  $\mathbb{D}(-)$. Moreover, in each rigid monoidal category there is an
  isomorphism $\Hom(1, A^\vee\otimes B) \to \Hom(A,B)$ \cite{BaK}. In our
  setting we can identify the left-hand side with the chain complex
  determined by the Markov trace.
\end{remark}

Since the category $\Kom(n)^b$ of bounded complexes is closed under the
duality $-^\vee$ it has an extra bit of symmetry which is lacking in the
category $\Kom(n)$. One way to use the theorem above in our context is to
allow one term in the $\Hom$-pairing to be a chain complex in $\Kom(n)$ and
require the other term to be a chain complex in $\Kom(n)^b$:
$$\Kom(n)^b\times \Kom(n) \to \Kom(0)\conj{or}\Kom(n)\times \Kom(n)^b \to \Kom(0).$$
Then Theorem \ref{dualthm} above continues to hold. This is all that is
necessary for the proof of Theorem \ref{homComputation} below.  For an
alternative viewpoint, see \cite{H}.

\subsection{Categorical quasi-idempotents}\label{catquasisubsec}
In this section we associate to each sequence $\e\in\aL_n$ a special chain
complex $Q_\e\in\Kom(n)$. This construction is directly analogous to the
definition of $q_\e$ in Section \ref{genjwprojsec}. 

\begin{definition}\label{bigqedef}
If $\e \in \aL_n$ and $|\e| = k$ then there is an element  $T_\e \in \Kom(k,n)$ which is defined inductively by $T_{(1)} = 1$, 
$$T_{\e\cdot (+1)} = \CPic{aepsilon1} \quad\normaltext{ and }\quad T_{\e\cdot (-1)} = \CPic{aepsilon2}$$
where the box represents a universal projector $P_k$ (Theorem
\ref{uprojectorthm}) and the marshmallow-shaped
region represents the element $T_\e$. The special element $Q_\e\in\Kom(n)$ is equal to the top $T_\e$ composed with its reverse,
$$Q_\e = T_\e\otimes \bar{T}_\e.$$
\end{definition}

In other words, replacing $p_k$ with $P_k$ in the definition of $q_\e$
gives us $Q_\e$. The graded Euler characteristic of $Q_\e$ is the element of
$\TL_n$ obtained from $q_\e$ after identifying its coefficients with
elements of $\ZZ[[q]][q^{-1}]$. 

The chain complexes $Q_\e$ will be used extensively in Sections
\ref{explicitsec} and \ref{mainconstructionsec}.

\subsection{Hulls of $Q_\e$ are perpendicular}\label{hullsareperpsec}
Before stating the main theorem in this section, we must introduce a lemma
which will be used in its proof.

\begin{lemma}\label{lemlemlem}
If $N\in\Kom(n)$ then the collection of complexes annihilated by $N$,
$$\Ann(N) = \{ M \in \Kom(n) : M\otimes N \simeq 0\}$$
is closed under convolution. 
\end{lemma}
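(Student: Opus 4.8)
The plan is to show that the class of objects killed by $N$ is closed under the convolution operation $\Tot$, which is the only thing that needs checking beyond the evident closure under isomorphism. The essential point is that tensoring with $N$ commutes with $\Tot$: if $Y = \{(E_i), q_{ij}\}$ is a twisted complex over $\Kom(n)$ with each $E_i \in \Ann(N)$, then $\Tot(Y) \otimes N$ is itself the total complex of the twisted complex $Y \otimes N := \{(E_i \otimes N), q_{ij}\otimes \Id_N\}$ obtained by tensoring every object and every structure map on the right with $N$ (resp. $\Id_N$). This is immediate from the block-triangular formula for $d_{\Tot}$ in Definition \ref{totdef} together with the fact that $-\otimes N$ is an additive dg functor on $\Kom(n)$, so it takes the matrix differential of $\Tot(Y)$ to the matrix differential of $\Tot(Y\otimes N)$; the Maurer--Cartan equation \eqref{MCeqn} is preserved because $-\otimes N$ is dg.

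First I would record that by hypothesis each $E_i \otimes N$ is contractible, i.e. is a contractible object of the dg category $\Kom(n)$ (here $E_i \otimes N \simeq 0$ means exactly that $\Id_{E_i\otimes N}$ is a boundary in its endomorphism complex). Then $Y \otimes N$ is a twisted complex all of whose objects are contractible, so Lemma \ref{contractLemma} applies and gives $\Tot(Y\otimes N) \simeq 0$. Since $\Tot$ preserves homotopy equivalences (the remark following Definition \ref{totdef}), and since we have identified $\Tot(Y)\otimes N \cong \Tot(Y\otimes N)$, we conclude $\Tot(Y)\otimes N \simeq 0$, i.e. $\Tot(Y) \in \Ann(N)$. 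One small caveat worth a sentence in the write-up: Lemma \ref{contractLemma} is stated for dg categories in general, but to pass its conclusion back through $\Tot$ we need convolution to be defined, which is fine here because we are working over $\Kom(n)$ — a category of (say nonnegatively graded, or unbounded over an additive category with countable products) chain complexes — exactly the situation flagged in the remark after Lemma \ref{contractLemma}.

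I do not expect a serious obstacle. The only point that needs care is the bookkeeping in the identification $\Tot(Y)\otimes N \cong \Tot(Y\otimes N)$ — one must check that the right tensor product $-\otimes N$ really does act on the block matrix $d_{\Tot(Y)}$ entrywise (including the signs $-d_{E_i}$) and that it sends twisted complexes to twisted complexes, which amounts to applying $-\otimes \Id_N$ to equation \eqref{MCeqn}. Since $-\otimes N$ is additive and dg this is routine, and the rest of the argument is just invoking Lemma \ref{contractLemma} and the homotopy-invariance of $\Tot$.
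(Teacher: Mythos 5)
Your argument is correct and is essentially the paper's own proof: the paper likewise identifies $\Tot(\{(E_i),q_{ij}\})\otimes N \cong \Tot(\{(E_i\otimes N), q_{ij}\otimes \Id_N\})$ and then invokes Lemma \ref{contractLemma}. Your write-up simply makes the routine dg-functoriality and sign bookkeeping explicit, which the paper leaves implicit.
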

\begin{proof}
If $\{(E_i), q_{ij}\}$ is a twisted complex with $E_i \in \Ann(N)$ then by Lemma \ref{contractLemma}
\[
\Tot(\{(E_i),q_{ij}\})\otimes N \cong \Tot(\{(E_i\otimes N), q_{ij}\otimes \Id_N\})\simeq 0.
\]
\end{proof}

\begin{remark}
In particular, if $Q_\e \otimes N \simeq 0$ then $\inp{Q_\e}\subset \Ann(N)$ and so each element $[Q_\e]\in\inp{Q_\e}$ in the hull of $Q_\e$ satisfies $[Q_\e]\otimes N\simeq 0$.
\end{remark}

The following theorem tells us that each convolution of $Q_\e$ is
perpendicular to each convolution of $Q_\nu$ with respect to the
$\Hom$-pairing when $\e\ndominatedBy\nu$. This is used in Theorem
\ref{conethingthm}, in conjunction with Proposition \ref{obstprop}, to
inductively construct the projectors $P_\e$.

\begin{theorem}\label{homComputation}
If $\e,\nu\in\aL_n$ are sequences and $\e\ndominatedBy\nu$ then
$$\Hom^*([Q_\e],[Q_\nu])\simeq 0$$ 
for each $[Q_\e]\in\inp{Q_\e}$ and $[Q_\nu]\in\inp{Q_\nu}$.
\end{theorem}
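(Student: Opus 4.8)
The strategy is to reduce the statement about all convolutions to a statement about the single complexes $Q_\e$ and $Q_\nu$, and then to compute that single $\Hom$-space using the duality of Theorem \ref{dualthm}. First I would use the fact that the hull operation is closed under convolution (Definition \ref{hulldef}) together with the Combing Lemma (Lemma \ref{combinTheHairs}) and Lemma \ref{contractLemma}: since $\Hom^*(A,B)$ is built from the mapping spaces $\Hom^*(A_i,B_j)$ of the components of the twisted complexes $[Q_\e]$ and $[Q_\nu]$ via iterated cones, if each $\Hom^*(Q_\e,Q_\nu)\simeq 0$ then $\Hom^*([Q_\e],[Q_\nu])\simeq 0$ as well. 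More precisely, one can fix $[Q_\nu]$, induct on the truncations $[Q_\e]_{[0,s]}$ using Lemma \ref{truncObs} to write each as an iterated cone of copies of $Q_\e$, and apply the long exact sequence in $\Hom^*(-,[Q_\nu])$; then do the same on the second variable. So the heart of the matter is:
\begin{equation*}
\e\ndominatedBy\nu \quad\Longrightarrow\quad \Hom^*(Q_\e,Q_\nu)\simeq 0.
\end{equation*}

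\textbf{The core computation.} By Theorem \ref{dualthm}, $\Hom^*(Q_\e,Q_\nu)\cong \tr(Q_\nu\otimes Q_\e^\vee)$. Writing $Q_\e = T_\e\otimes\bar T_\e$ and $Q_\nu = T_\nu\otimes\bar T_\nu$ as in Definition \ref{bigqedef}, and noting $Q_\e^\vee \cong Q_\e$ (each $P_k$ is self-dual up to the grading reversals, and the construction is symmetric), the trace $\tr(Q_\nu\otimes Q_\e)$ is a closed diagram in which two universal projectors $P_{|\nu|}$ and $P_{|\e|}$ appear, connected through the marshmallow regions $T_\nu,\bar T_\nu,T_\e,\bar T_\e$. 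The plan is to show this closed complex is contractible whenever $\e\ndominatedBy\nu$. The key algebraic input is the defining property (3) of the universal projector $P_n$ in Theorem \ref{uprojectorthm}: $P_n\otimes D$ is contractible for any non-identity diagram $D$. So I would analyze the ``cap map'' that closes up the diagram: when $\e\ndominatedBy\nu$ the dominance condition fails at some index $k$, i.e. the partial sum of $\e$ exceeds that of $\nu$; this forces the through-strands coming out of the $T_\nu$ side to be strictly fewer than $|\e|$ at the point where they meet $P_{|\e|}$, so that $P_{|\e|}$ is necessarily composed with a diagram containing a turnback (a non-identity diagram). Then property (3) of $P_{|\e|}$, propagated through the convolution via Lemma \ref{contractLemma} (as in Lemma \ref{lemlemlem}), shows the whole trace is contractible. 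Concretely, I expect to argue by induction on $n = l(\e) = l(\nu)$, peeling off the last entries of $\e$ and $\nu$ using the recursive structure of $T_\e$, $T_\nu$ and the decomposition rule \eqref{peeqn}, so that the inductive hypothesis handles the shorter sequences and the base case follows because $|\e| > |\nu|$ forces a turnback directly.

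\textbf{Main obstacle.} The delicate point is the bookkeeping that translates the combinatorial failure of the dominance order $\e\ndominatedBy\nu$ into the geometric statement that some $P_k$ inside the closed diagram is fed a diagram with a turnback. The marshmallow regions $T_\e$ are themselves built recursively out of smaller $P_j$'s and cups, so one must track exactly how many through-strands survive at each level, and show that the first index $k$ at which $\sum_{i\le k}\e_i > \sum_{i\le k}\nu_i$ is precisely where the contradiction with property (3) arises. I would handle this by setting up the induction so that the last letters $\e_n,\nu_n\in\{+1,-1\}$ are stripped off: if $\e_n=\nu_n$ the problem reduces immediately to the shorter sequences (and dominance still fails there), while if $\e_n\ne\nu_n$ one uses the recurrence for $T_{\e\cdot(\pm 1)}$ together with $P_k e_i = 0$ (property (3) of the Jones--Wenzl projector, lifted to $P_k$ by Theorem \ref{uprojectorthm}) to kill the relevant term. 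The other bit of care needed is that $Q_\nu\in\Kom(n)$ need not be bounded, so one uses the extended form of Theorem \ref{dualthm} noted in the excerpt, allowing one factor in $\Kom(n)^b$ and the other in $\Kom(n)$, which is exactly the generality provided; and one invokes Lemma \ref{contractLemma} in the unbounded setting permitted by Remark \ref{totremark}.
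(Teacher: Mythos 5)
Your overall architecture (reduce to building blocks, apply the duality of Theorem \ref{dualthm}, kill the result with a turnback against a universal projector, and propagate contractibility through convolutions via Lemma \ref{contractLemma}) is in the right spirit, but your core computation does not go through as stated. The duality $\Hom^*(A,B)\cong \tr(B\otimes A^\vee)$ requires the dualized factor to lie in $\Kom(n)^b$: the operation $(-)^\vee$ of Definition \ref{reflectdef} reverses the homological grading, so it is only defined on bounded complexes, and the extension noted after Theorem \ref{dualthm} allows exactly one unbounded factor, namely the one that is \emph{not} dualized. Both $Q_\e$ and $Q_\nu$ are semi-infinite, since they contain universal projectors $P_k$, so $\tr(Q_\nu\otimes Q_\e^\vee)$ is not available in this framework; worse, the identification $Q_\e^\vee\cong Q_\e$ is false. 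The complex $P_k^\vee$ is concentrated in non-positive homological degrees while $P_k$ is positively graded, and the two are not homotopy equivalent (they categorify $p_k$ with respect to the two different power-series expansions of the coefficients; cf.\ Remark \ref{remarkring}) --- this is precisely the distinction between the ``positive'' and ``negative'' categorified projectors. So the step $\Hom^*(Q_\e,Q_\nu)\cong\tr(Q_\nu\otimes Q_\e)$, on which your whole trace argument rests, collapses.

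The paper's proof avoids ever dualizing an unbounded object: it decomposes the \emph{target} $[Q_\nu]$ into its individual chain groups $N=[Q_\nu]^n$, which are direct sums of honest Temperley--Lieb diagrams and hence lie in $\Kom(n)^b$. The failure of dominance at an index $i$, say $k=\e_1+\cdots+\e_i>l=\nu_1+\cdots+\nu_i$, implies that every diagram summand $a$ of $N$ factors over the first $i$ boundary points through at most $l$ strands, so $Q_\e\otimes a^\vee\simeq 0$ by the same turnback argument as Proposition \ref{orthogonality}; Lemma \ref{lemlemlem} then gives $[Q_\e]\otimes N^\vee\simeq 0$ for the entire convolution at once, duality gives $\Hom^*([Q_\e],N)\simeq 0$, and the chain groups are reassembled using $\Tot^\Pi$ and Lemma \ref{contractLemma}. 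Note also that the projector doing the killing is the intermediate one of size $k$ sitting inside $T_\e$ at level $i$, not $P_{|\e|}$ at the waist (for instance $|\e|$ may be smaller than $|\nu|$ even when $\e\ndominatedBy\nu$), which is why the chain-group decomposition, rather than an induction peeling off the \emph{last} letters --- dominance failure is a condition on \emph{initial} partial sums --- is the natural way to carry out the bookkeeping you correctly identified as the main obstacle.
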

\begin{proof}
Suppose that $n\in\ZZ_+$ and $N = [Q_\nu]^n$ is the $n$th chain group of the
chain complex $[Q_\nu]$. The condition $\e\ndominatedBy\nu$ implies that there is an $i$ such that 
$$\e_1+\dots +\e_i>\nu_1+\dots +\nu_i.$$
Let $k = \e_1+\dots +\e_i$ and $l = \nu_1+\dots +\nu_i$. By definition of $Q_\nu$, every summand $a$ of $N = [Q_\nu]^n$ can be written as
$$a = c\otimes (b\sqcup 1_{n-i}) \conj{for some} b\in \TL(i,l)$$
and $b$ satisfies $Q_\e\otimes (b\sqcup 1_{n-i})^\vee\simeq 0$ (for the same reasons as Proposition \ref{orthogonality}). Hence, $Q_\e\otimes N^\vee \simeq 0$. Lemma \ref{lemlemlem} and Theorem \ref{dualthm} imply that
\[
\Hom^*([Q_\e],[Q_\nu]^n) = \Hom^*([Q_\e],N)=  \Hom^*([Q_\e]\otimes N^\vee, 1_n) \simeq 0.
\]

Finally, we observe that $\Hom^*([Q_\e],[Q_\nu]) = \Tot^\Pi(E)$ (see the Remark in Section \ref{totremark}) where
\[
E = \Hom^*([Q_\e],[Q_\nu]^0)\rightarrow \Hom^*([Q_\e],[Q_\nu]^1)\rightarrow  \dots .
\]
So $\Hom^*([Q_\e],[Q_\nu])$ is a convolution of contractible chain complexes
and Lemma \ref{contractLemma} implies the theorem.
\end{proof}

\begin{remark}
  The above considerations admit marginal generalizations which are not
  necessary for the proof of the main theorem. For instance, suppose that
  $A\otimes^{\Pi}B$ denotes the tensor product: $(A\otimes^\Pi B)_n =
  \prod_{i+j=n}A_i\otimes B_j$.  Then the argument above shows that
  $[Q_\nu]\otimes^{\Pi} [Q_\e]\simeq 0$ unless $\e\unlhd \nu$ and the
  following adjunction holds:
$$\Hom^\ast(C\otimes A,B)\cong \Hom^\ast(C,B\otimes^\Pi A^\vee).$$ 
These observations lead to the stronger statement below.
\begin{statement} Suppose that $A,B\in \Kom(n)$, $[Q_\e]\in\inp{Q_\e}$ and $[Q_\nu]\in\inp{Q_\nu}$.  Then $\e\ndominatedBy \nu$ implies that $\Hom^\ast(A\otimes [Q_\e], B\otimes [Q_\nu])$ is contractible.\end{statement}

\end{remark}

\section{Explicit constructions of resolutions of identity}\label{explicitsec}
In this section the higher order projectors are constructed for $n = 2,3,
\normaltext{ and } 4$. The general construction can be found in Section
\ref{mainconstructionsec}. Many of the important features of this proof can
be seen concretely here when $n=4$.  The subscripts used in this section
correspond to the sequences introduced in Definition \ref{domOrderstuff}.

\subsection{Two strands: $P_{(1,-1)}$ and $P_{(1,1)}$}\label{twostrandssec}
The second projector $P_2$ can be represented by chain complex of the form
\begin{center}
\begin{tikzpicture}[scale=10]
\matrix (m) [matrix of math nodes, row sep=3em,
column sep=3em, text height=1.5ex, text depth=0.25ex]
{ \CPic{p2box}  =   \CPic{n2-1}  & tq \CPic{n2-s}  & t^2 q^3 \CPic{n2-s} & t^3 q^5 \CPic{n2-s} \cdots\\ };
\path[->]
(m-1-1) edge (m-1-2)
(m-1-2) edge (m-1-3)
(m-1-3) edge (m-1-4);
\end{tikzpicture}
\end{center}
where the first map is a saddle and the last two maps alternate between a
difference and a sum of two dots, see \cite{CK}. Recall the box notation from Definition \ref{boxnotation}, we write
\begin{center}
\begin{tikzpicture}[scale=10]
\matrix (m) [matrix of math nodes, row sep=3em,
column sep=3em, text height=1.5ex, text depth=0.25ex]
{ \CPic{p2box} = \CPic{n2-1}  & t \CPicTailbox{n2-s} \quad\normaltext{ and }\quad  \CPic{n2-1} \simeq \CPicTailbox{n2-s} & \CPic{p2box}  \\ };
\path[->]
(m-1-1) edge (m-1-2)
(m-1-2) edge (m-1-3);
\end{tikzpicture}
\end{center}
where the map defining the first cone is the saddle appearing in the
definition of $P_2$ and the map in the second cone is the inclusion of the
tail into $P_2$. Let us write $t P_{(1,-1)}$ for the subcomplex of $P_2$
consisting of terms in homological degree greater than zero and set
$P_{(1,1)} = P_2$. There is a map $i : P_{(1,-1)} \to P_{(1,1)}$, with
$\deg_t(i) = 1$, which satisfies $\Cone(i)\simeq 1_2$ where $1_2$ is the
identity diagram illustrated above.

The chain complex $P_{(1,-1)}$ is idempotent and the map $i$ gives the
resolution of identity.

\subsection{Three strands: $P_{(1,-1,1)}$, $P_{(1,1,-1)}$ and $P_{(1,1,1)}$}\label{threestrandssec}
The identity object $1_3$ on three strands is given by the union of the
identity object on two strands together with an extra strand, $1_3 = 1_2
\sqcup 1_1$. Applying $-\sqcup 1$ to the resolution of identity in the
previous section we obtain:
\begin{center}
\begin{tikzpicture}[scale=10]
\matrix (m) [matrix of math nodes, row sep=3em,
column sep=3em, text height=1.5ex, text depth=0.25ex]
{ \CPic{n3-p2box}\;\; = \CPic{n3-1}  & t \CPicTailboxx{n3-e1} \quad\normaltext{ and }\quad  \CPic{n3-1} \simeq \CPicTailboxx{n3-e1} & \CPic{n3-p2box}\quad.  \\ };
\path[->]
(m-1-1) edge (m-1-2)
(m-1-2) edge (m-1-3);
\end{tikzpicture}
\end{center}

Lemma \ref{standardformlemma} implies that the third universal projector
$P_3 = P_{(1,1,1)}$ can be chosen to be equal to the cone $P_{(1,1)}\sqcup 1 \to t P_{(1,1,-1)}$. Pictorially,
\begin{center}
\begin{tikzpicture}[scale=10]
\matrix (m) [matrix of math nodes, row sep=3em,
column sep=3em, text height=1.5ex, text depth=0.25ex]
{ \CPic{p3box} = \CPic{n3-p2box}\quad & t \CPicTailboxx{pbox-ppm}   & \coneqnOne\\ };
\path[->]
(m-1-1) edge (m-1-2);
\end{tikzpicture}
\end{center}

Consider the contractible chain complex $\Cone(-\Id) = P_{(1,1,-1)} \to t
P_{(1,1,-1)}$. Using the second equation above and from gluing on the
contractible chain complex it follows that $1_3$ is homotopic to
\begin{center}
\begin{tikzpicture}[scale=10, node distance=2.5cm]
\node (D) {$\CPicTailboxx{n3-e1}$};
\node (F) [right of=D] {$\CPic{n3-p2box}$};
\node (E) [below of=D] {$\CPicTailboxx{pbox-ppm}$};
\node (G) [right of=E] {$t\CPicTailboxx{pbox-ppm}$};
\draw[->] (D) to node {} (E);
\draw[->] (D) to node {} (F);
\draw[->] (E) to node {} (G);
\draw[->] (F) to node {} (G);
\end{tikzpicture}
\end{center}
by Lemma \ref{addcontractiblelemma}. Using the triangle \coneqnOne\ above
and reassociating allows us to write this complex in terms of the projectors
$P_{(1,-1,1)} = P_{(1,-1)}\sqcup 1$, $P_{(1,1,-1)}$ and $P_{(1,1,1)}$. The
identity object $1_3$ is homotopy equivalent to $R_3$.
\begin{center}
\begin{tikzpicture}[scale=10, node distance=2.5cm]
\node (D) {$\CPicTailboxx{n3-e1}$};
\node (F) [right of=D] {$\CPic{p3box}$};
\node (E) [below of=D] {$\CPicTailboxx{pbox-ppm}$};
\draw[->] (D) to node {} (E);
\draw[->] (D) to node {} (F);
\draw[->, bend right] (E) to node {} (F);
\node (R) [right=.5cm of F] {$=$};
\node (X) [right=.5cm of R] {$P_{(1,-1,1)}$};
\node (Y) [right of=X] {$P_{(1,1,1)}$};
\node (Z) [below of=X] {$P_{(1,1,-1)}$};
\draw[->] (X) to node {} (Z);
\draw[->] (X) to node {} (Y);
\draw[->, bend right] (Z) to node {} (Y);
\end{tikzpicture}
\end{center}
The maps above are compositions of inclusions of tails and differentials
from chain complexes of projectors.

\subsection{Four strands}\label{fourstrandssec}
In the previous section we obtained a resolution of the identity on three
strands.  The identity object $1_4$ on four strands is given by the union of
the identity object on three strands together with an extra strand, $1_4 = 1_3
\sqcup 1_1$. Applying $-\sqcup 1$ to the resolution of identity in the
previous section we obtain the diagram pictured below.

\begin{center}
\begin{tikzpicture}[scale=10, node distance=2.5cm]
\node (S) {$\CPic{n4-id}$};
\node (T) [right=.5cm of S] {$\simeq$};
\node (D) [right=.5cm of T] {$\CPicTailboxx{n4-e1}$};
\node (F) [right of=D] {$\CPic{n3sqcup1}$};
\node (E) [below of=D] {$\CPicTailboxx{n4-unp2}$};
\draw[->] (D) to node {} (E);
\draw[->] (D) to node {} (F);
\draw[->, bend right] (E) to node {} (F);
\end{tikzpicture}
\end{center}

Now Lemma \ref{standardformlemma} implies that the fourth universal
projector $P_4 = P_{(1,1,1,1)}$ can be chosen to be equal to the cone
$P_3 \sqcup 1 \to t P_{(1,1,1,-1)}$.

\begin{center}
\begin{tikzpicture}[scale=10]
\matrix (m) [matrix of math nodes, row sep=3em,
column sep=3em, text height=1.5ex, text depth=0.25ex]
{ \CPic{p4box} = \CPic{n3sqcup1} & t \CPicTailboxx{p4tail} \\ };
\path[->]
(m-1-1) edge (m-1-2);
\end{tikzpicture}
\end{center}

Using Lemma \ref{addcontractiblelemma} we can add the contractible chain
complex $\Cone(-\Id) = P_{(1,1,1,-1)} \to t P_{(1,1,1,-1)}$ to the
decomposition above to obtain a homotopy equivalent complex on the left-hand
side below.

\begin{center}
\begin{tikzpicture}[scale=10, node distance=2.5cm]
\node (D) {$\CPicTailboxx{n4-e1}$};
\node (F) [right of=D] {$\CPic{n3sqcup1}$};
\node (E) [below of=D] {$\CPicTailboxx{n4-unp2}$};
\node (H) [below of=F] {};
\node (J) [right of=H] {$t \CPicTailboxx{p4tail}$};
\node (I) [below of=H] {$\CPicTailboxx{p4tail}$};
\draw[->] (D) to node {} (E);
\draw[->] (D) to node {} (F);
\draw[->, bend right] (E) to node {} (F);
\draw[->, bend right] (I)  to node [swap] {$-\Id$} (J);
\draw[->, bend left] (F) to node {} (J);
\draw[->, bend right] (E) to node {} (I);
\draw[->, bend left] (D) to node {} (I);
\node (P1) [right=.5cm of J] {$\cong$};
\node (P2) [right of=P1] {};
\node (D2) [above of=P2] {$\CPicTailboxx{n4-e1}$};
\node (F2) [right of=D2] {$\CPic{p4box}$};
\node (E2) [below of=D2] {$\CPicTailboxx{n4-unp2}$};
\node (I2) [below of=E2] {$\CPicTailboxx{p4tail}$};
\draw[->] (D2) to node {} (E2);
\draw[->] (D2) to node {} (F2);
\draw[->, bend right] (E2) to node {} (F2);
\draw[->] (E2) to node {} (I2);
\draw[->, bend right=50] (D2) to node {} (I2);
\draw[->, bend right=50] (I2) to node {} (F2);
\end{tikzpicture}
\end{center}

Reassociating allows us to replace $P_{(1,1,1)}\sqcup 1$ in the resolution
of identity and yields the isomorphic complex containing the projector $P_4$
on the right-hand side above. Unfortunately, we aren't done because our
resolution of identity still consists of terms which do not factor through
universal projectors. In order to replace the two offending terms,
$P_{(1,1,-1)}\sqcup 1$ and $P_{(1,-1,1)}\sqcup 1$, a bit of work
remains. The process by which we replace $P_{(1,1,-1)}\sqcup 1$ will
illustrate the general strategy.

We can construct the following chain complex,
\tikzset{node distance=2cm, auto}
\begin{center}
\begin{tikzpicture}[scale=10]
\matrix (m) [matrix of math nodes, row sep=3em,
column sep=3em, text height=3.5ex, text depth=3.5ex]
{ \MPic{n4-e1} & t\MPic{n4-e1}  & t^2 \MPic{n4-e1}  & \cdots  &  \\
 t \MPic{n4-e1e3} & t^2\MPic{n4-e1e3}  & t^3 \MPic{n4-e1e3}  & \cdots  &  \\
 t^2 \MPic{n4-e1e3} & t^3\MPic{n4-e1e3}  & t^4 \MPic{n4-e1e3}  & \cdots  &  \\
 \vdots & \vdots  & \vdots  &   &  \\ };
\path[->]
(m-1-1) edge (m-1-2)
(m-1-2) edge (m-1-3)
(m-1-3) edge (m-1-4);
\path[->]
(m-2-1) edge (m-2-2)
(m-2-2) edge (m-2-3)
(m-2-3) edge (m-2-4);
\path[->]
(m-3-1) edge (m-3-2)
(m-3-2) edge (m-3-3)
(m-3-3) edge (m-3-4);
\path[->]
(m-1-1) edge (m-2-1)
(m-2-1) edge (m-3-1)
(m-3-1) edge (m-4-1);
\path[->]
(m-1-2) edge (m-2-2)
(m-2-2) edge (m-3-2)
(m-3-2) edge (m-4-2);
\path[->]
(m-1-3) edge (m-2-3)
(m-2-3) edge (m-3-3)
(m-3-3) edge (m-4-3);
\end{tikzpicture}
\end{center}
In the more concise bracket notation, this chain complex is $P_{(1,-1,1,1)} =
\MPicTailboxx{n4-e1p2}$. Notice the top row of this bicomplex is
$P_{(1,-1,1)}\sqcup 1$; in bracket notation, this is $\MPicTailboxx{n4-e1}$.
The columns of the bicomplex $P_{(1,1,-1,1)}$ are given by $\MPic{n4-e1p2}$.

We define $P_{(1,-1,1,-1)}$ to be the tail of the bicomplex
$P_{(1,1,-1,1)}$: the subcomplex consisting of all rows beyond the first
(shifted down by 1). In bracket notation, $P_{(1,-1,1,-1)} =
\MPicTailboxx{n4-e1e3}.$ The vertical differential of the bicomplex
$P_{(1,1,-1,1)}$ determines a map $\delta : P_{(1,-1,1)}\sqcup 1 \to
P_{(1,-1,1,-1)}$ such that $P_{(1,-1,1,1)} = \Cone(\delta).$ In pictures,
\begin{center}
\begin{tikzpicture}[scale=10]
\matrix (m) [matrix of math nodes, row sep=3em,
column sep=3em, text height=1.5ex, text depth=0.25ex]
{ \CPicTailboxx{n4-e1p2} = \CPicTailboxx{n4-e1}  & t \CPicTailboxx{n4-e1e3} .   \\ };
\path[->]
(m-1-1) edge (m-1-2);
\end{tikzpicture}
\end{center}

Using Lemma \ref{addcontractiblelemma} we can add the contractible chain
complex $\Cone(-\Id) = P_{(1,-1,1,-1)} \to t P_{(1,-1,1,-1)}$ to our
decomposition above and reassociate. The identity object $1_4$ is homotopy
equivalent to the left complex pictured below. The complex on the right is
obtained by reassociating.
\begin{center}
\begin{tikzpicture}[scale=7, node distance=2.5cm]
\node (E) {$P_{(1,-1,1,-1)}$};
\node (F) [right=.01cm of E] {};
\node (G) [below of=E] {$t P_{(1,-1,1,-1)}$};
\node (H) [right=.01cm of F] {$P_{(1,-1,1)} \sqcup 1$};
\node (P1) [below of=H] {};
\node (I) [below of=P1] {$P_{(1,1,1,-1)}$};
\node (J) [below of=H] {$P_{(1,1,-1)}\sqcup 1$};
\node (K) [right of=H] {$P_{(1,1,1,1)}$};
\draw[->] (E) to node [swap] {$-\Id$} (G);
\draw[->, bend right] (H) to node {} (G);
\draw[->, bend right=50] (H) to node {} (I);
\draw[->] (J) to node {} (I);
\draw[->, bend right] (J) to node {} (K);
\draw[->, bend right=50] (I) to node {} (K);
\draw[->] (H) to node {} (K);
\draw[->] (H) to node {} (J);
\node (Z1) [right=.25cm of K] {};
\node (W1) [below of=K] {};
\node (W2) [right=.75cm of W1] {$\cong$};
\node (A2) [right=.25cm of Z1] {$P_{(1,-1,1,-1)}$};
\node (B2) [right of=A2] {$P_{(1,-1,1,1)}$};
\node (C2) [right of=B2] {$P_{(1,1,1,1)}$};
\node (D2) [below of=B2] {$P_{(1,1,-1)}\sqcup 1$};
\node (E2) [below of=D2] {$P_{(1,1,1,-1)}$};
\draw[->] (A2) to node {} (B2);
\draw[->] (B2) to node {} (C2);
\draw[->] (B2) to node {} (D2);
\draw[->] (D2) to node {} (E2);
\draw[->, bend right=50] (B2) to node {} (E2);
\draw[->, bend right] (D2) to node {} (C2);
\draw[->, bend right=50] (E2) to node {} (C2);
\end{tikzpicture}
\end{center}

We still have to replace the subcomplex $P_{(1,1,-1)}\sqcup 1$ with a
complex that factors through $P_2$. In order to accomplish this task we
construct a chain complex $P_{(1,1,-1,-1)} = \MPicTailboxx{n4-pp0}$ and a
chain map $\gamma : P_{(1,1,-1)}\sqcup 1 \to P_{(1,1,-1,-1)}$ so that $
P_{(1,1,-1,1)} = \Cone(\gamma) = \MPicTailboxx{n4-pp2}.$ The complex
$P_{(1,1,-1)} \sqcup 1 = \MPicTailboxx{n4-unp2}$ can be written as an
iterated cone
$$\CPicTailboxx{n4-unp2} = \Bigg(\Bigg( \Bigg(\Bigg(\CPic{n4-unp2} \to t \CPic{n4-unp2}\Bigg) \to t^2 \CPic{n4-unp2}\Bigg) \to t^3 \CPic{n4-unp2}\Bigg)\to \cdots \Bigg).$$
We can also write
\begin{center}
\begin{tikzpicture}[scale=7]
\matrix (m) [matrix of math nodes, row sep=3em,
column sep=3em, text height=1.5ex, text depth=0.25ex]
{ \CPic{n4-pp2} = \CPic{n4-unp2}  & t \CPicTailboxx{n4-pp0} .   \\ };
\path[->]
(m-1-1) edge (m-1-2);
\end{tikzpicture}
\end{center}
(In the formula above, a degree shift of $[2]/[3]$ has been omitted,
following the convention in Definition \ref{boxnotation}.) Using this map we
now construct a new triangle of the form
\begin{center}
\begin{tikzpicture}[scale=7]
\matrix (m) [matrix of math nodes, row sep=3em,
column sep=3em, text height=1.5ex, text depth=0.25ex]
{ \CPicTailboxx{n4-pp2} = \CPicTailboxx{n4-unp2}  & t \CPicTailboxxTwo{n4-pp0} .   \\ };
\path[->]
(m-1-1) edge (m-1-2);
\end{tikzpicture}
\end{center}

This process is also carried out in Theorem \ref{pespexistencethm}. We use
double brackets above to emphasize that the term on the left is a
convolution of convolutions and also to distinguish it from the complex
$\MPicTailboxx{n4-pp0}$ which is the tail of \MPic{n4-pp2}.

The first step is to form the cone on the first term of $P_{(1,1,-1)} \sqcup 1$.
\begin{center}
\begin{tikzpicture}[scale=7, node distance=2.5cm]
\node (A) {$\CPic{n4-unp2}$};
\node (B) [right of=A] {$\CPic{n4-unp2}$};
\node (C) [right of=B] {$\cdots$};
\node (D) [below of=A] {$\CPicTailboxx{n4-pp0}$};
\draw[->] (A) to node {} (B);
\draw[->] (B) to node {} (C);
\draw[->] (A) to node {} (D);
\end{tikzpicture}
\end{center}
Reassociating shows that the first term in this complex agrees with the
desired complex. Now assume by induction that we can form a chain complex in
which the first $N$ terms of $P_{(1,1,-1)} \sqcup 1$ have been written in
this way.

\begin{center}
\begin{tikzpicture}[scale=7, node distance=2.5cm]
\node (A) {$\CPic{n4-unp2}$};
\node (A1) [right of=A] {$\CPic{n4-unp2}$};
\node (A2) [right of=A1] {$\CPic{n4-unp2}$};
\node (B) [right of=A2] {$\CPic{n4-unp2}$};
\node (C) [right of=B] {$\cdots$};
\node (D) [below of=A] {$\CPicTailboxx{n4-pp0}$};
\node (E) [right of=D] {$\CPicTailboxx{n4-pp0}$};
\node (F) [right of=E] {$\CPicTailboxx{n4-pp0}$};
\draw[->] (A) to node {} (A1);
\draw[->] (A1) to node {} (A2);
\draw[->] (A2) to node {} (B);
\draw[->] (B) to node {} (C);
\draw[->] (A) to node {} (D);
\draw[->] (A1) to node {} (E);
\draw[->] (A2) to node {} (F);
\draw[->] (D) to node {} (E);
\draw[->] (E) to node {} (F);
\draw[->] (A) to node {} (E);
\draw[->] (A) to node {} (F);
\draw[->] (A1) to node {} (F);
\end{tikzpicture}
\end{center}
We draw the diagonal arrows to emphasize that the maps in this contruction
necessarily propagate in a non-trivial way.

After grouping the first $N$ terms of the top and bottom rows within
parenthesis we consider taking the cone on the $N+1$st term
\begin{center}
\begin{tikzpicture}[scale=7, node distance=3cm]
\node (A) {$\Bigg(\CPic{n4-pp2}\Bigg)$};
\node (B) [right of=A] {$\CPic{n4-unp2}$};
\node (C) [right of=B] {$\cdots$};
\node (D) [below of=A] {};
\node (E) [below of=B] {$\CPicTailboxx{n4-pp0}$};
\draw[->] (A) to node {$\alpha$} (B);
\draw[->] (B) to node {} (C);
\draw[->] (B) to node {$\delta$} (E);
\draw[->, bend right] (A) to node {$-h$} (E);
\end{tikzpicture}
\end{center}
After taking shifts into account, the composition $\delta\circ\alpha$ is a chain map of degree $0$ and
$$\delta\circ\alpha \in \Morph^*\left(\Bigg(\CPic{n4-pp2}\Bigg), \CPicTailboxx{n4-pp0}\right) \simeq 0.$$
The $\Morph$-complex is contractible by Theorem \ref{homComputation}. Proposition \ref{obstprop} allows us to produce a chain
complex with $N+1$ terms of the desired form. This process is stable, adding
the $N+1$st map does not change any maps which appear earlier, because in
Proposition \ref{obstprop}, the map $\gamma$ is an extension of the map
$\beta$. Since there are countably many terms we can use this process to
produce the chain complex, $P_{(1,1,-1,1)}$, that we want.

By construction the top row is $P_{(1,1,-1)}\sqcup 1$ and so we define the bottom row to be $t P_{(1,1,-1,-1)}$. The non-horizontal components of the differential yield a chain map 
$$\eta : P_{(1,1,-1)}\sqcup 1 \to P_{(1,1,-1,-1)}$$ 
such that $P_{(1,1,-1,1)} = \Cone(\eta)$. Our program is resumed by
replacing the $P_{(1,1,-1)}\sqcup 1$ term above.  By introducing the
contractible term $\Cone(-\Id_{P_{(1,1,-1,-1)}}) = P_{(1,1,-1,-1)}\to t
P_{(1,1,-1,-1)}$ to our last complex, we see that $1_4$ is homotopy
equivalent to the diagram pictured below.

\begin{center}
\begin{tikzpicture}[scale=10, node distance=2.5cm]
\node (L) {};
\node (A) [right=.5cm of L] {$P_{(1,-1,1,-1)}$};
\node (B) [right of=A] {$P_{(1,-1,1,1)}$};
\node (C) [right of=B] {$P_{(1,1,1,1)}$};
\node (D) [below of=B] {$P_{(1,1,-1)}\sqcup 1$};
\node (E) [below of=D] {$P_{(1,1,1,-1)}$};
\node (N) [below of=L] {$P_{(1,1,-1,-1)}$};
\node (Z) [below of=A] {};
\node (M) [below of=N] {$tP_{(1,1,-1,-1)}$};
\draw[->, bend left=5] (B) to node {} (N);
\draw[->, bend left=5] (D) to node [swap] {$\eta$} (M);
\draw[->] (N) to node [swap] {$-\Id$} (M);
\draw[->] (A) to node {} (B);
\draw[->] (B) to node {} (C);
\draw[->] (B) to node {} (D);
\draw[->] (D) to node {} (E);
\draw[->, bend right=50] (B) to node {} (E);
\draw[->, bend right] (D) to node {} (C);
\draw[->, bend right=50] (E) to node {} (C);
\end{tikzpicture}
\end{center}

We conclude by reassociating and using the Combing Lemma
\ref{combinTheHairs} to exchange the bad arrow
$P_{(1,-1,1,1)} \to P_{(1,1,-1,-1)}$ with an arrow $P_{(1,-1,1,-1)} \to
P_{(1,1,-1,-1)}$ that respects the dominance order $\dominatedBy$ on $\aL_4$
(Definition \ref{domOrderstuff}). The object $1_4$ is homotopic to the complexes
pictured below.

\begin{center}
\begin{tikzpicture}[scale=10, node distance=2.5cm]
\node (A) {$P_{(1,-1,1,-1)}$};
\node (B) [right of=A] {$P_{(1,-1,1,1)}$};
\node (C) [right of=B] {$P_{(1,1,1,1)}$};
\node (D) [below of=B] {$P_{(1,1,-1,1)}$};
\node (E) [below of=D] {$P_{(1,1,1,-1)}$};
\node (N) [below of=A] {$P_{(1,1,-1,-1)}$};
\draw[->, bend right] (B) to node {} (N);
\draw[->] (N) to node {} (D);
\draw[->] (A) to node {} (B);
\draw[->] (B) to node {} (C);
\draw[->] (B) to node {} (D);
\draw[->] (D) to node {} (E);
\draw[->, bend right=45] (B) to node {} (E);
\draw[->, bend right] (D) to node {} (C);
\draw[->, bend right=50] (E) to node {} (C);
\node (Z1) [below of=C] {};
\node (Z2) [right=1cm of Z1] {$\cong$};
\node (N2) [right=.5cm of Z2] {$P_{(1,1,-1,-1)}$};
\node (A2) [above of=N2] {$P_{(1,-1,1,-1)}$};
\node (B2) [right of=A2] {$P_{(1,-1,1,1)}$};
\node (C2) [right of=B2] {$P_{(1,1,1,1)}$};
\node (D2) [below of=B2] {$P_{(1,1,-1,1)}$};
\node (E2) [below of=D2] {$P_{(1,1,1,-1)}$};
\draw[->] (A2) to node {} (N2);
\draw[->] (N2) to node {} (D2);
\draw[->] (A2) to node {} (B2);
\draw[->] (B2) to node {} (C2);
\draw[->] (B2) to node {} (D2);
\draw[->] (D2) to node {} (E2);
\draw[->, bend right=45] (B2) to node {} (E2);
\draw[->, bend right] (D2) to node {} (C2);
\draw[->, bend right=50] (E2) to node {} (C2);
\end{tikzpicture}
\end{center}

The end result is a resolution of identity on four strands in which all of
the terms factor through universal projectors of the form $P_{4-2k}$ for
$k=0,1,2$ and all maps between terms respect the dominance order.

\begin{vista}
  In order to accomplish our task, we needed two basic manuevers. The first
  was gluing a contractible chain complex onto our resolution without
  changing the homotopy type using Lemma \ref{addcontractiblelemma}. The
  second was the construction of chain complexes suitable for
  substitution. The first step is provided by Lemma \ref{standardformlemma}.
  While Proposition \ref{obstprop} and Theorem \ref{homComputation} allow us
  to construct more sophisticated sorts of substitutions.
\end{vista}

In order to construct the $P_\e$, a general version of the argument given
above is carried out in Section \ref{mainconstructionsec}. The reader may
refer to this section for intuition.

\section{General construction of the resolution of identity}\label{mainconstructionsec}
In this section we categorify the equations
$$1_n = \sum_{\e \in \aL_n} p_\e \conj{ and } p_\e p_{\nu} = \d_{\e \nu} p_\e$$
of Theorem \ref{decompositionOfIdentity}. This is accomplished by
constructing chain complexes $P_\e\in \Kom(n)$ for each sequence $\e \in
\aL_n$ which satisfy idempotence and orthogonality properties, $P_\e\otimes
P_\nu \simeq \delta_{\e\nu} P_\e$.  In the process of constructing the
projectors $P_\e$, we build the {\em resolution of identity} $R_n$; a
convolution of projectors $P_\e$ which satisfies $1_n \simeq R_n$.  The
Euler characteristic $\K_0(1_n) = \K_0(R_n)$ can be identified with the
first equation above.

In Theorem \ref{homComputation}, we showed that $\e \ndominatedBy \nu$
implies $\Hom^*([Q_{\e}],[Q_{\nu}]) \simeq 0$, for each two convolutions
$[Q_\e]\in\inp{Q_\e}$ and $[Q_{\nu}]\in\inp{Q_{\nu}}$.  The theorem below
exploits this fact in order to build triangles relating convolutions in the hulls 
$\inp{Q_{\e\cdot (+1)}}$, $\inp{Q_{\e\cdot (-1)}}$ and $\inp{Q_{\e}\sqcup
  1}$.  An immediate consequence is Corollary \ref{pespexistenceprop}, which
constructs chain complexes $P_\e \in\inp{Q_\e}$. These chain complexes categorify the idempotents $p_\e\in\TL_n$.

\begin{theorem}\label{conethingthm}
For each sequence $\e \in \aL_n$ and convolution $[Q_\e]\in \inp{Q_\e}$,  
there exists a convolution $[Q_{\e\cdot (-1)}]\in\inp{Q_{\e\cdot (-1)}}$ and a chain map 
$$\d : [Q_\e]\sqcup 1 \rightarrow [Q_{\e\cdot (-1)}]$$ 
of homological and internal degree zero such that 
$$\Cone(\d)\in\inp{Q_{\e\cdot (+1)}}.$$
\end{theorem}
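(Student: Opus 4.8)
\emph{Strategy.} First reduce to the atomic case $[Q_\e]=Q_\e$, then bootstrap to an arbitrary convolution $[Q_\e]=\Tot(\{(A_i),a_{ij}\})$ by iterating the atomic step over the pieces, exactly as in the $n=4$ computation of Section~\ref{fourstrandssec}. Throughout, set $k=|\e|$ and note that $\e\cdot(+1)$ and $\e\cdot(-1)$ agree except in the last slot, where $(+1)>(-1)$, so $\e\cdot(+1)\dominates\e\cdot(-1)$ but $\e\cdot(+1)\ndominatedBy\e\cdot(-1)$. By Theorem~\ref{homComputation} this gives $\Hom^*([Q_{\e\cdot(+1)}],[Q_{\e\cdot(-1)}])\simeq 0$ for all convolutions, while the reverse $\Hom$-space need not vanish; this asymmetry is what forces the cone to point the way the statement requires.

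\emph{Atomic case.} By Definition~\ref{bigqedef}, $Q_\e=T_\e\otimes\bar T_\e$ with a single universal projector $P_k$ along the through-strand of $T_\e$, and $Q_{\e\cdot(\pm1)}$ are obtained by attaching a strand to that $P_k$ in the two prescribed ways. Feeding the presentation of $P_{k+1}$ from Lemma~\ref{standardformlemma} into this local picture exhibits $Q_\e\sqcup 1$ in a triangle whose remaining vertices are one copy of $Q_{\e\cdot(+1)}$ (consistent with the coefficient $1$ of $q_{\e\cdot(+1)}$ in $q_\e\sqcup 1=q_{\e\cdot(+1)}+\tfrac{[k]}{[k+1]}q_{\e\cdot(-1)}$) and a convolution $\bigl[\tfrac{[k]}{[k+1]}Q_{\e\cdot(-1)}\bigr]\in\inp{Q_{\e\cdot(-1)}}$. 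Its connecting map lies in a shift of $\Hom^*(Q_{\e\cdot(+1)},[Q_{\e\cdot(-1)}])\simeq 0$, hence is null-homotopic, so the triangle splits:
$$Q_\e\sqcup 1\;\simeq\;Q_{\e\cdot(+1)}\;\oplus\;\Bigl[\tfrac{[k]}{[k+1]}Q_{\e\cdot(-1)}\Bigr].$$
Taking $\d$ to be the projection onto the second summand --- of internal and homological degree $0$ once the $q$-shift is absorbed into the bracket notation --- gives $\Cone(\d)\simeq Q_{\e\cdot(+1)}\in\inp{Q_{\e\cdot(+1)}}$.

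\emph{General case.} For $[Q_\e]=\Tot(\{(A_i),a_{ij}\})$ with each $A_i\simeq Q_\e$ (up to shift), apply $-\sqcup 1$ to get a one-sided twisted complex $[Q_\e]\sqcup 1=\Tot(\{(A_i\sqcup 1),a_{ij}\sqcup\Id\})$ with pieces $Q_\e\sqcup 1$. Substitute the atomic splitting into each piece (Corollary~\ref{substitution}) to obtain a homotopy equivalent twisted complex with pieces $X_i:=Q_{\e\cdot(+1)}$ and $Y_i:=\bigl[\tfrac{[k]}{[k+1]}Q_{\e\cdot(-1)}\bigr]$ and connecting maps of types $X\to X$, $Y\to Y$, $Y\to X$, $X\to Y$. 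The $X\to Y$ maps are boundaries since $\Hom^*(Q_{\e\cdot(+1)},[Q_{\e\cdot(-1)}])\simeq 0$, so the Combing Lemma~\ref{combinTheHairs} removes them, perturbing the differential only by maps through the $X\to X$, $Y\to X$, $Y\to Y$ components and thus creating no new $X\to Y$ maps. (Equivalently, these vertical maps are produced by Obstruction Theory, Proposition~\ref{obstprop}, precisely as in Section~\ref{fourstrandssec}.) After this, $\{X_i\}$ is a sub-twisted-complex with quotient $\{Y_i\}$: put $[Q_{\e\cdot(-1)}]:=\Tot(\{Y_i\})\in\inp{Q_{\e\cdot(-1)}}$, let $\d:[Q_\e]\sqcup 1\to[Q_{\e\cdot(-1)}]$ be the quotient map (degree zero), and conclude $\Cone(\d)\simeq t\,\Tot(\{X_i\})\in\inp{Q_{\e\cdot(+1)}}$.

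\emph{Main difficulty.} The substantive point is making the iteration rigorous over the countable index set --- the Combing Lemma and Proposition~\ref{obstprop} concern one map in a finite diagram, and each step a priori affects later ones. Following Section~\ref{fourstrandssec}, one works on the truncations $([Q_\e]\sqcup 1)_{[0,s]}$ (Lemma~\ref{truncObs}), uses that the lift $\gamma$ in Proposition~\ref{obstprop} \emph{extends} $\beta$ so the construction stabilizes, and passes to the colimit ($\Tot$ preserves homotopy equivalences and commutes with cones; see Definition~\ref{totdef} and the following remarks). The uniform vanishing $\Hom^*([Q_{\e\cdot(+1)}],[Q_{\e\cdot(-1)}])\simeq 0$, valid for all convolutions by Lemma~\ref{lemlemlem} and Theorem~\ref{dualthm}, keeps every obstruction trivial at each stage. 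The rest --- one-sidedness, that $-\sqcup 1$ preserves convolutions, and bookkeeping of grading shifts --- is routine.
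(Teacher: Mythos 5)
The atomic case is wrong, and it carries the rest of your argument. The triangle coming from Lemma \ref{standardformlemma} is
$Q_\e\sqcup 1 \xrightarrow{\;\d\;} \bigl[\tfrac{[k]}{[k+1]}Q_{\e\cdot(-1)}\bigr] \to Q_{\e\cdot(+1)} \to t\,(Q_\e\sqcup 1)$,
and the rotation in which $Q_\e\sqcup 1$ is the middle (hence potentially split) term has its connecting map in a shift of $\Hom^*\bigl([Q_{\e\cdot(-1)}],\,Q_{\e\cdot(+1)}\bigr)$, \emph{not} in $\Hom^*\bigl(Q_{\e\cdot(+1)},[Q_{\e\cdot(-1)}]\bigr)$. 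Since $\e\cdot(-1)\dominatedBy\e\cdot(+1)$, Theorem \ref{homComputation} says nothing about that space, and the map in question (essentially the inclusion of the tail of $P_{k+1}$) is not null-homotopic. Concretely, for $\e=(1)$ your splitting asserts $1_2\simeq P_2\oplus\bigl[\tfrac{1}{[2]}Q_{(1,-1)}\bigr]$; applying the Markov trace refutes this, because $\tr(1_2)$ has homology concentrated in homological degree $0$ while $\tr(P_2)$ (the closed-up complex $A^{\ott 2}\to A\to A\to\cdots$ with alternating zero and $2X$ maps) has nonzero homology in infinitely many homological degrees, and homology of a direct summand is a direct summand. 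This non-splitness is the whole point of the paper (Section \ref{twostrandssec} presents $1_2$ as a non-trivial cone on $P_{(1,-1)}\to P_{(1,1)}$). Consequently your general case --- replacing each $A_i\sqcup 1$ by a direct sum $X_i\oplus Y_i$ and combing away the $X\to Y$ arrows --- never gets started, because the input decomposition does not exist.

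What does survive is your closing ``main difficulty'' paragraph, which is essentially the paper's actual proof. The correct atomic statement is the non-split cone: writing $Q_\e = A_\e\ott P_k\ott\overline{A}_\e$ and tensoring the triangle of Lemma \ref{standardformlemma} with $A_\e\sqcup 1$ and $\overline{A}_\e\sqcup 1$ gives $\d:Q_\e\sqcup 1\to [Q_{\e\cdot(-1)}]$ with $\Cone(\d)\simeq Q_{\e\cdot(+1)}$, and nothing splits. The passage to an arbitrary convolution $[Q_\e]=\Tot\{(E_i),q_{ij}\}$ is then an induction over the convolution structure: assuming the statement for each $E_i$ (data $\d_i:E_i\sqcup 1\to T_i$ with $\Cone(\d_i)\in\inp{Q_{\e\cdot(+1)}}$), one assembles a twisted complex whose top row is $[Q_\e]\sqcup 1$ and whose columns are the $\Cone(\d_i)$, producing the extra south-pointing arrows one truncation at a time (Lemma \ref{truncObs}, Proposition \ref{obstprop}); the obstructions lie in $\Hom^*\bigl([Q_{\e\cdot(+1)}]_{[0,r]},\,t^rT_{r+1}\bigr)\simeq 0$, which is exactly the vanishing direction $\e\cdot(+1)\ndominatedBy\e\cdot(-1)$ that you correctly identified at the outset, and one passes to the limit. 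So the repair is: delete the splitting, take the non-split cone as the atomic case, and promote your last paragraph from a remark about ``bookkeeping'' to the body of the proof.
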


The proof below is a generalization of the obstruction theoretic argument
used to construct the map $\eta : P_{(1,1,-1)} \sqcup 1 \to t
P_{(1,1,-1,-1)}$ in Section \ref{fourstrandssec}. The convolution
$[Q_{\e\cdot (+1)}]$ will be defined as $\Cone(\d)$.

\begin{proof}
Let $S\subset \inp{Q_{\e}}$ denote the collection of chain complexes for
which the theorem is true.  In order to prove the theorem we show that
$Q_\e\in S$ and that $S$ is closed under convolution. These two statements
imply that $\inp{Q_\e}\subset S$.

In order to show that $Q_\e\in S$ we must chase our own definitions. By
definition, there is a chain complex $A_\e\in\Kom^*(n)$ such that $Q_\e = A_\e \otimes
P_k \otimes \overline{A}_\e$ where $|\e| = k$. By Lemma \ref{standardformlemma}, there is a triangle
$P_{k+1} = P_k\sqcup 1\buildrel \d'\over\longrightarrow t T$ where $T$
denotes the tail of the projector $P_{k+1}$.  Setting $[Q_{\e\cdot (-1)}] = (A_\e \sqcup
1)\otimes T\otimes (\overline{A}_\e\sqcup 1)$ and $\d =\Id\otimes \d'
\otimes \Id$ shows that $Q_\e\in S$.

The remainder of the proof shows that $S$ is closed under convolutions. Suppose that $[Q_\e]\in \inp{Q_\e}$ is a convolution. So $[Q_\e] = \Tot(E)$ where $E = \{ (E_i), q_{ij} \}$ and the
 $E_i\in \inp{Q_\e}$ are chain complexes for which the theorem holds.
By assumption there are chain complexes $T_i \in \inp{Q_{\e\cdot (-1)}}$ and maps
$$\d_i:E_i \sqcup 1\rightarrow T_i \conj{such that} \Cone(\d_i)\in \inp{Q_{\e\cdot (+1)}}.$$ 
We wish to define a chain complex $[Q_{\e\cdot (+1)}]\in\inp{Q_{\e\cdot (+1)}}$ which,
as a graded object, is a sum of the complexes appearing in the diagram below.
\begin{center}
\begin{tikzpicture}[scale=7, node distance=2.5cm]
\node (A) {$E_0 \sqcup 1$};
\node (A1) [right of=A] {$t E_1 \sqcup 1$};
\node (A2) [right of=A1] {$t^2 E_2 \sqcup 1$};
\node (B) [right of=A2] {$\cdots$};
\node (D) [below of=A] {$t T_0$};
\node (E) [right of=D] {$t^2 T_1$};
\node (F) [right of=E] {$t^3 T_2 $};
\node (G) [right of=F] {$\cdots$};
\draw[->] (A) to node {} (A1);
\draw[->] (A1) to node {} (A2);
\draw[->] (A2) to node {} (B);
\draw[->] (F) to node {} (G);
\draw[->] (A) to node {$\d_0$} (D);
\draw[->] (A1) to node {$\d_1$} (E);
\draw[->] (A2) to node {$\d_2$} (F);
\draw[->] (D) to node {} (E);
\draw[->] (E) to node {} (F);
\draw[->] (A) to node {} (E);
\draw[->] (A) to node {} (F);
\draw[->] (A1) to node {} (F);
\end{tikzpicture}
\end{center}

The convolution $[Q_{\e\cdot (+1)}]$ will be defined as a direct limit of
truncations $[Q_{\e\cdot (+1)}]_{[0,r]}$, see Definition \ref{truncdef}. We
proceed by induction on $r$.

If $r=0$ then set $[Q_{\e\cdot (+1)}]_{[0,0]} = \Cone(\d_0)$.
Assume that $[Q_{\e\cdot (+1)}]_{[0,r]}$ has constructed and let
$[Q_{\e}\sqcup 1]_{[0,r]}$ denote the corresponding truncation of
$[Q_\e]\sqcup 1$, which appears as the top row of $[Q_{\e\cdot (+1)}]_{[0,r]}$.  The
differential on $[Q_\e]\sqcup 1$ gives a chain map $\a:[Q_{\e}\sqcup
  1]_{[0,r]}\rightarrow t^rE_{r+1}\sqcup 1$.  Let $z$ be the
composition of the maps in the diagram below.

\begin{center}
\begin{tikzpicture}[scale=7, node distance=4cm]
\node (A) {$[Q_{\e\cdot (+1)}]$};
\node (A1) [right of=A] {$[Q_\e \sqcup 1]_{[0,r]}$};
\node (A2) [right of=A1] {$t^r E_{r+1}$};
\node (A3) [right of=A2] {$t^r T_{r+1}$};
\draw[->] (A) to node {$\pi$} (A1);
\draw[->] (A1) to node {$\alpha$} (A2);
\draw[->] (A2) to node {$(-1)^r \d_{r+1},$} (A3);
\end{tikzpicture}
\end{center}

\noindent where $\pi$ is the projection of $[Q_{\e\cdot (+1)}]_{[0,r]}$ onto its top row. By Theorem \ref{homComputation} the map $z$ belongs to a contractible $\Hom$-space:
$$z\in\Hom^*([Q_{\e \cdot (+1)}], t^r T_{r+1}) \simeq 0.$$
Therefore, $z = d(h)$ is a
boundary and Proposition \ref{obstprop} allows us to produce a chain complex
with $r+1$ terms of the desired form. 
\begin{center}
\begin{tikzpicture}[scale=3, node distance=3.5cm]
\node (A) {$[Q_{\e\cdot (+1)}]_{[0,r]}$};
\node (B) [right of=A] {$t^{r+1}E_{i+1}\sqcup 1$};
\node (D) [below of=A] {};
\node (E) [below of=B] {$t^{r+2} T_{r+1}$};
\draw[->] (A) to node {$\alpha \circ \pi$} (B);
\draw[->] (B) to node {$(-1)^{r+1} \delta_{r+1}$} (E);
\draw[->, bend right=25] (A) to node {$h$} (E);
\end{tikzpicture}
\end{center}

Observe that the differentials only point south or east.  The new column in
this complex is $t^{r+1}\Cone(\d_{r+1})$.  Since $\pi$ is the identity on
the top row and zero on the bottom row, the top row of this complex is the
corresponding truncation of $[Q_\e]\sqcup 1$. This construction is stable
for the same reasons as in Section \ref{fourstrandssec}.

Defining $[Q_{\e\cdot (+1)}]$ to be the limit of the resulting directed
system completes the proof.
\end{proof}

\begin{corollary}\label{pespexistenceprop}
For each sequence $\e\in \aL_n$, there exist a chain complex $P_\e \in \inp{Q_\e}$ in the hull of $Q_\e$ and maps
$$\delta_\e : P_\e \sqcup 1 \to t P_{\e \cdot (-1)}$$
such that $P_{\e \cdot (+1)} = \Cone(\delta_\e)$.
\end{corollary}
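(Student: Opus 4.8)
**

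The plan is to deduce Corollary~\ref{pespexistenceprop} from Theorem~\ref{conethingthm} by an easy induction on the length $\ell(\e)$ of the sequence, using the base projector $P_n$ of Theorem~\ref{uprojectorthm} and the recurrence $\aL_{n} \ni \e \mapsto \{\e\cdot(+1),\e\cdot(-1)\}$ that organizes all admissible sequences into a tree.

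\textbf{Base case.} For a sequence of length $1$ the only admissible sequence is $\e=(1)$, and we set $P_{(1)} = P_1 = 1_1$, the identity object on one strand, which trivially lies in $\inp{Q_{(1)}}=\inp{1_1}$. More generally, to start the induction one observes that the ``straight'' sequence $\e = (1,1,\dots,1)$ of length $n$ has $|\e|=n$, $Q_\e = P_n$, and $P_\e := P_n \in \inp{Q_\e}$; this is the universal projector supplied by Theorem~\ref{uprojectorthm}.

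\textbf{Inductive step.} Suppose that for every admissible sequence $\e$ of length $n$ we have constructed $P_\e \in \inp{Q_\e}$. Fix such an $\e$ with $|\e| = k$. Apply Theorem~\ref{conethingthm} to the convolution $[Q_\e] = P_\e \in \inp{Q_\e}$: this produces a convolution, which we name $P_{\e\cdot(-1)} \in \inp{Q_{\e\cdot(-1)}}$, together with a chain map $\delta_\e : P_\e \sqcup 1 \to P_{\e\cdot(-1)}$ of homological and internal degree zero such that $\Cone(\delta_\e) \in \inp{Q_{\e\cdot(+1)}}$; we set $P_{\e\cdot(+1)} := \Cone(\delta_\e)$. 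Introducing a homological shift to match the statement (replacing $\delta_\e$ by the corresponding map $P_\e \sqcup 1 \to t P_{\e\cdot(-1)}$ and $\Cone$ by the shifted cone, exactly as in Definition~\ref{conedef}) gives the displayed formula $P_{\e\cdot(+1)} = \Cone(\delta_\e)$. Since every admissible sequence of length $n+1$ is uniquely of the form $\e\cdot(+1)$ or $\e\cdot(-1)$ for a unique admissible $\e\in\aL_n$ (when $\e\cdot(-1)$ is itself admissible; otherwise $\e\cdot(-1)\notin\aL_{n+1}$ and we use the convention $P_{\e\cdot(-1)}=0$, consistent with Theorem~\ref{uprojectorthm} applied to an empty tail), this constructs $P_\nu \in \inp{Q_\nu}$ for all $\nu \in \aL_{n+1}$, completing the induction.

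\textbf{Main obstacle.} The genuine content has already been packaged into Theorem~\ref{conethingthm}, so the remaining work is essentially bookkeeping; the one point requiring care is the edge case where $\e\cdot(-1)$ fails to be admissible (equivalently $\tau$ would force a negative through-degree), so that the ``tail'' $T$ in the proof of Theorem~\ref{conethingthm} is zero and $P_{\e\cdot(+1)}=P_\e\sqcup 1$. One should check that the construction of Theorem~\ref{conethingthm} degenerates correctly in this situation --- it does, since then $[Q_{\e\cdot(-1)}]=0$ and $\delta=0$, so $\Cone(\delta)=P_\e\sqcup 1 \in \inp{Q_\e \sqcup 1}=\inp{Q_{\e\cdot(+1)}}$. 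Granting this, the corollary follows immediately.
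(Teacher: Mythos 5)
Your proposal is correct and follows essentially the route the paper intends: the corollary is obtained by induction on the length of $\e$, taking $P_{(1)}=1_1$ and repeatedly applying Theorem~\ref{conethingthm} to $[Q_\e]=P_\e$ to produce $P_{\e\cdot(-1)}$, $\delta_\e$, and $P_{\e\cdot(+1)}=\Cone(\delta_\e)$, with the shift handled by the cone convention. Your explicit treatment of the degenerate case $|\e|=0$ (trivial tail, $\delta_\e=0$, $\Cone(\delta_\e)=P_\e\sqcup 1$) is a welcome bit of care that the paper leaves implicit.
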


The corollary below follows from the argument given above.

\begin{corollary}
For each $n>0$ and each sequence $\e\in\aL_n$ the triangle
$$P_\e \sqcup 1 = P_{\e\cdot (-1)}\rightarrow P_{\e\cdot (+1)}$$
descends to Equation \eqref{peeqn} of Section \ref{genjwprojsec} in the
Grothendieck group $\K_0(\Kom(n))$.
\end{corollary}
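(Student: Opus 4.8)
The plan is to apply the Grothendieck group functor $\K_0$ to the mapping cone that defines $P_{\e\cdot(+1)}$ in Corollary \ref{pespexistenceprop} and read off the resulting additive relation among Euler characteristics. Recall that $P_{\e\cdot(+1)} = \Cone(\delta_\e)$ for a chain map $\delta_\e : P_\e\sqcup 1 \to tP_{\e\cdot(-1)}$, so that, as a graded object, $P_{\e\cdot(+1)}$ is $(P_\e\sqcup 1) \oplus tP_{\e\cdot(-1)}$. On the subcategory of $\Kom$ consisting of complexes that are bounded from below and have degree zero differential the Grothendieck group is well behaved (Section \ref{categorified TL section}), and the class of a complex is the alternating sum of the classes of its terms; in particular $[tC] = -[C]$. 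Hence
$$[P_{\e\cdot(+1)}] \;=\; [P_\e\sqcup 1]\;+\;[tP_{\e\cdot(-1)}]\;=\;[P_\e\sqcup 1]\;-\;[P_{\e\cdot(-1)}],$$
that is, $[P_\e\sqcup 1] = [P_{\e\cdot(+1)}] + [P_{\e\cdot(-1)}]$. Since $-\sqcup 1$ on complexes induces the operation $-\sqcup 1$ on $\TL$ after passing to $\K_0$ (part of the compatibility of the categorification $\Cob$ of $\TL$ with its planar algebra structure), once $\K_0(P_\nu) = p_\nu$ is known for every admissible $\nu$ this identity transports, under the isomorphism $\K_0(\Kom)\otimes_{\ZZ[q,q^{-1}]}\CC(q)\xto{\sim}\TL$, to $p_\e\sqcup 1 = p_{\e\cdot(+1)} + p_{\e\cdot(-1)}$, which is Equation \eqref{peeqn}.

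It remains to verify $\K_0(P_\nu) = p_\nu$, and I would fold this into the same induction on $l(\nu)$. Because $P_\nu\in\inp{Q_\nu}$ and $\K_0(Q_\nu) = q_\nu$ (Section \ref{catquasisubsec}), the identity $\K_0(\inp{Q_\nu}) = \inp{\K_0(Q_\nu)}$ forces $\K_0(P_\nu) = c_\nu q_\nu$ for some scalar $c_\nu$; the base case is $P_{(1)} = 1_1$ with $c_{(1)} = 1$. For the inductive step, substitute $\K_0(P_\bullet) = c_\bullet q_\bullet$ into the displayed relation and compare with the Temperley-Lieb recurrence \eqref{qerecureqn}, namely $q_\e\sqcup 1 = q_{\e\cdot(+1)} + \tfrac{[k]}{[k+1]}q_{\e\cdot(-1)}$ with $k = |\e|$. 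Since $q_{\e\cdot(+1)}$ and $q_{\e\cdot(-1)}$ are nonzero and mutually orthogonal by Proposition \ref{orthogonality}, they are linearly independent, so matching coefficients forces $c_{\e\cdot(+1)} = c_\e$ and $c_{\e\cdot(-1)} = \tfrac{[k]}{[k+1]}c_\e$. These are precisely the recursions defining the scalars $f_\bullet$ in the proof of Proposition \ref{decompositionOfIdentity}, whence $c_\nu = f_\nu$ and $\K_0(P_\nu) = f_\nu q_\nu = p_\nu$.

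The only step demanding real care is the sign in $[\Cone(\delta_\e)]$: one must keep track of the homological shift $t$ on the target $tP_{\e\cdot(-1)}$ contributing a factor $-1$ to the Euler characteristic, so that the minus sign in $[P_\e\sqcup 1] - [P_{\e\cdot(-1)}]$ is exactly what becomes the plus sign of \eqref{peeqn}; everything else is formal. As a consistency check, in the two strand case of Section \ref{twostrandssec} this computation reads $[P_2] = [1_2] - [P_{(1,-1)}] = 1 - \tfrac{1}{[2]}[e_1] = p_2$, recovering the Jones-Wenzl projector.
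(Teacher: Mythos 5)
Your proposal is correct and is essentially the paper's intended argument: the paper gives no proof beyond noting that the corollary ``stems from the argument given above,'' i.e.\ one applies $\K_0$ to the cone $P_{\e\cdot(+1)} = \Cone(\delta_\e : P_\e\sqcup 1 \to t P_{\e\cdot(-1)})$ produced by Theorem \ref{conethingthm} and Corollary \ref{pespexistenceprop}, and your sign bookkeeping ($[tC]=-[C]$, so $[P_\e\sqcup 1] = [P_{\e\cdot(+1)}]+[P_{\e\cdot(-1)}]$) agrees with the two-strand model of Section \ref{twostrandssec}. Your additional induction identifying $\K_0(P_\nu) = f_\nu q_\nu = p_\nu$ via the hull property, the recurrence \eqref{qerecureqn}, and Proposition \ref{orthogonality} supplies a normalization step the paper leaves implicit, and it is carried out correctly.
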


The following theorem is a generalization of the resolution of identity
found in Section \ref{fourstrandssec}. This is the main result of this section.

\begin{theorem}\label{pespexistencethm}
For each $n>0$, there is a twisted complex $R_n = \{(P_\e), d_{\e\nu}\}_{\e\in\aL_n} \in \Tw\Kom(n)$ such that 
$$1_n \simeq R_n.$$
\end{theorem}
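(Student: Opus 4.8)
The plan is to argue by induction on $n$, following the template of Section~\ref{fourstrandssec}: the two basic manoeuvres are \emph{substitution} (trading each $P_\e\sqcup 1$ for the pair $P_{\e\cdot(+1)},P_{\e\cdot(-1)}$) and \emph{combing} (straightening out the resulting differential). For $n=1$ we have $\aL_1=\{(1)\}$ and $P_{(1)}=1_1$ by Corollary~\ref{pespexistenceprop}, so $R_1:=P_{(1)}$ works. Suppose then that for $n-1$ we have a one-sided twisted complex $R_{n-1}=\{(P_\e)_{\e\in\aL_{n-1}},d_{\e\nu}\}$ with $1_{n-1}\simeq R_{n-1}$, in which $d_{\e\nu}=0$ unless $\e\dominatedBy\nu$; fix a total order $\prec$ on $\aL_{n-1}$ refining the dominance order. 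Since $-\sqcup 1\colon\Kom(n-1)\to\Kom(n)$ is a dg functor it preserves twisted complexes and homotopy equivalences, so $1_n=1_{n-1}\sqcup 1\simeq R_{n-1}\sqcup 1=\{(P_\e\sqcup 1)_{\e\in\aL_{n-1}},\,d_{\e\nu}\sqcup\Id_1\}$.

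\emph{Step 1 (substitution).} For each $\e\in\aL_{n-1}$, Corollary~\ref{pespexistenceprop} supplies a degree-zero chain map $\delta_\e\colon P_\e\sqcup 1\to tP_{\e\cdot(-1)}$ with $\Cone(\delta_\e)=P_{\e\cdot(+1)}$ (using the convention $P_{\e\cdot(-1)}=0$ when $\e\cdot(-1)\notin\aL_n$, in which case $\Cone(\delta_\e)=P_\e\sqcup 1=P_{\e\cdot(+1)}$). Apply the Inductive Substitution Corollary~\ref{substitution} to $R_{n-1}\sqcup 1$ with $A_\e=P_\e\sqcup 1$, $C_\e=tP_{\e\cdot(-1)}$, $f_\e=\delta_\e$ and hence $B_\e=\Cone(f_\e)=P_{\e\cdot(+1)}$. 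This produces a twisted complex whose objects are the $C_\e$ and $B_\e$; after discarding the zero objects and using the bijection $\aL_n\xrightarrow{\ \sim\ }\{(\e,s):\e\in\aL_{n-1},\ s\in\{\pm 1\},\ \e\cdot s\in\aL_n\}$ that sends $\mu=(i_1,\dots,i_n)$ to $((i_1,\dots,i_{n-1}),i_n)$, the objects are precisely $\{P_\mu\}_{\mu\in\aL_n}$, each occurring once. Call the result $R'_n=\{(P_\mu)_{\mu\in\aL_n},d'_{\mu\lambda}\}$; thus $1_n\simeq R'_n$, and by Definition~\ref{hulldef} we already know $1_n\in\inp{\{P_\mu:\mu\in\aL_n\}}$.

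\emph{Step 2 (combing).} Extend $\prec$ to a total order on $\aL_n$ refining the dominance order. Whenever $d'_{\mu\lambda}\ne 0$ with $\mu\ndominatedBy\lambda$ (a ``bad'' arrow), Theorem~\ref{homComputation} gives $\Hom^*(P_\mu,P_\lambda)\simeq 0$ (since $P_\mu\in\inp{Q_\mu}$ and $P_\lambda\in\inp{Q_\lambda}$), so $d'_{\mu\lambda}$ is a boundary and the Combing Lemma~\ref{combinTheHairs} removes it, perturbing the rest of the differential only by arrows factoring through an arrow into $P_\mu$ or an arrow out of $P_\lambda$. Carrying out these combings in a suitable order (see below) one arrives at $R_n=\{(P_\e)_{\e\in\aL_n},d_{\e\nu}\}$, a twisted complex in which $d_{\e\nu}=0$ unless $\e\dominatedBy\nu$ (so it is one-sided with respect to $\prec$), still satisfying $1_n\simeq R_n$; this is the required resolution, and it reproduces the induction hypothesis at level $n$. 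Taking $\K_0$ recovers $1_n=\sum_{\e\in\aL_n}p_\e$ via~\eqref{peeqn}, as in the corollary following Corollary~\ref{pespexistenceprop}.

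\emph{Main obstacle.} Steps~1 and the individual applications of the combing lemma are routine given Theorem~\ref{homComputation} and Corollary~\ref{pespexistenceprop}; the real work is the bookkeeping in Step~2, since combing a bad arrow $P_\mu\to P_\lambda$ can create new bad arrows among the perturbation terms. One must therefore fix a disciplined order of processing — for instance, rather than performing all of Step~1 first, interleave Steps~1 and~2 exactly as in Section~\ref{fourstrandssec}, handling the sequences $\e\in\aL_{n-1}$ one at a time in decreasing $\prec$-order and combing away the bad arrows produced by each substitution before moving on — and then show, via a well-founded complexity measure on the set of bad arrows (using finiteness of $\aL_n$ together with the fact that $R_{n-1}$ already respects dominance, which restricts which bad arrows can appear), that the procedure terminates. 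Making this termination argument precise is the heart of the proof; the four-strand computation of Section~\ref{fourstrandssec} is the model to follow, with Theorem~\ref{conethingthm} providing the obstruction-theoretic construction of each $P_{\e\cdot(+1)}$ used along the way.
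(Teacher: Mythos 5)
Your proposal is correct and follows essentially the same route as the paper: induct on $n$, apply $-\sqcup 1$ to $R_{n-1}$, substitute each $P_\e\sqcup 1$ by the cone $P_{\e\cdot(-1)}\to P_{\e\cdot(+1)}$ via Corollaries \ref{substitution} and \ref{pespexistenceprop}, and then remove the arrows violating the dominance order using Theorem \ref{homComputation} together with the Combing Lemma \ref{combinTheHairs}. The termination issue you single out is indeed glossed over in the paper's own proof; it is settled by the kind of disciplined ordering you sketch, e.g.\ processing bad arrows in increasing order of the gap they span in (a total refinement of) the order on the finite set $\aL_n$, since each combing only perturbs the differential by arrows spanning strictly larger gaps.
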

\begin{proof}
When $n=1$ we set $R_1 = 1$. Assume by induction that there is a twisted complex $R_{n-1} =\{ (P_\e), d_{\e\nu}\}_{\e\in\aL_{n-1}}$ such that $1_{n-1} \simeq R_{n-1}$. Placing a disjoint strand next to everything yields
$$1_n = 1_{n-1}\sqcup 1 =  R_{n-1}\sqcup 1 = \{ (P_\e\sqcup 1), d_{\e\nu}\sqcup 1\}.$$
Corollary \ref{substitution} and Corollary \ref{pespexistenceprop} imply that we can replace each $P_\e\sqcup 1$ with $P_{\e\cdot (-1)}\rightarrow P_{\e\cdot (+1)}$ obtaining an equivalence
\[
1_n\simeq \Big\{P_{\e\cdot (-1)}\xrightarrow{f_\e} P_{\e\cdot (+1)}, \left(\begin{smallmatrix} d_{P_{\e\cdot (-1)}} & 0\\ f_\e & d_{P_{\e\cdot (+1)}}\end{smallmatrix}\right)\Big\} \cong \{ P_{\e \cdot (\pm 1)}, d'_{\e\nu} \}
\]
The right-hand side is a twisted complex indexed by elements of
$\aL_n$. There may be maps which do not respect the dominance order $\dominatedBy$ on
$\aL_n$. However, when $\e \ndominatedBy \nu$ the $\Hom$-space from $P_\e$ to $P_\nu$ is contractible: 
$$\Hom^*(P_{\e},P_{\nu}) \simeq 0.$$

Applications of the Combing Lemma \ref{combinTheHairs} allow us to exchange
maps in $\{ P_{\e \cdot (\pm 1)}, d'_{\e\nu} \}$ which do not respect the
dominance order for those that do. The resulting twisted complex is the
resolution of identity $R_n$.
\end{proof}

\begin{remark}
  When referring to a chain complex in $R_n \in \Kom(n)$ the resolution of
  identity $R_n$ is defined to be the convolution $\Tot(R_n)$, see
  Definition \ref{totdef}.
\end{remark}

\begin{remark}
  In the Grothendieck group $\K_0(\Kom(n))$, the resolution of identity
  becomes the equation $1_n = \sum_{\e\in\aL_n} p_\e $ from Section
  \ref{genjwprojsec}. From the discussion in Section \ref{idempotentsirred sec}, we see that $R_n$ categorifies the decomposition of $V_1^{\otimes
    n}$ into irreducible representations.
\end{remark}

\begin{remark}
In the decategorified setting, representations decompose into direct sums of
irreducible representations. After categorification we have learned that
this decomposition is maintained up to homotopy, but the irreducible
components now have non-trivial maps between them.
\end{remark}

\section{Higher order projectors}\label{charprojsec}
In this section we will define the universal higher order projectors and
articulate the sense in which the resolutions of identity $R_n$ produced in
Sections \ref{explicitsec} and \ref{mainconstructionsec} yield
categorifications of the idempotents $P_{n,k}$ defined in Section
\ref{idempotentsirred sec}. While the axioms of Definition
\ref{ukgenprojdef} given below are sufficient to characterize the projectors
$P_{n,k}$ uniquely up to homotopy we will see that the $P_{n,k}$ also
satisfy a number of other useful properties analogous to those enumerated in
Section \ref{genjwprojsec}. We begin by introducing a few definitions
similar to those of Section \ref{TLcat}.

Just as elements $a\in \TL(n,m)$ have a notion of through-degree (Definition
\ref{throughdegdef}), chain complexes $A\in\Kom(n,m)$ have a corresponding
notion of through-degree.

\begin{definition}\label{komthroughdegdef}
Suppose that $A \in \Cob(n,m)$ is a Temperley-Lieb diagram then $A$ factors
as a composition $A = C\otimes B$ where
$$B\times C \in \Cob(n,l)\times \Cob(l,m).$$
The \emph{through-degree} $\tau(A)$ of $A$ is equal to the minimal $l$ in
such a factorization. If $A\in \Kom(n,m)$ is chain complex of
Temperley-Lieb diagrams $\{A_i\}$ then $\tau(A)=\max_i \tau(A_i)$.
\end{definition}

We now define subcategories $\Kom^k(n)$ of $\Kom(n)$. Elements of
$\Kom^k(n)$ will be convolutions of complexes which factor through the
universal projector $P_k$. If $A\in\Kom^k(n)$ then $\tau(A) = k$. Compare to
Remark \ref{REMREF}.

\begin{definition}\label{subcatkom}
A chain complex $C\in\Kom(n,m)$ \emph{factors through} $P_k$ when there are objects $A\in \Kom(n,k)$ and $B \in \Kom(k,m)$ such that
$$C \cong A \otimes P_k \otimes B.$$
Let $\Kom^k(n,m) \subset\Kom(n,m)$ be the full subcategory of convolutions
of chain complexes which factor through $P_k$. We have analogous notions of
subcategories $\Kom^{*,k}(n,m)$ and $\Tw\Kom^k(n,m)$ in $\Kom^*(n,m)$ and
$\Tw\Kom(n,m)$ respectively. See Section \ref{twistedcomsec} for definitions
of these categories.
\end{definition}

The next lemma tells us that chain complexes which factor through various
universal projectors $P_k$ compose in a predictable manner.

\begin{lemma}\label{ortholemma}
For each $A\in \Kom^k(n,m)$ and $B\in \Kom^l(m,r)$, if $k \neq l$ then 
$$B\otimes A\simeq 0.$$  
\end{lemma}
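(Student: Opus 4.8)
The plan is to reduce the statement to a property of the universal projectors $P_k$ themselves, namely that $P_k \otimes (D) \otimes P_l \simeq 0$ whenever $k \neq l$ and $D$ is an arbitrary diagram, and then bootstrap this to convolutions using the fact that $\Ann(-)$-type subcategories are closed under convolution (Lemma \ref{lemlemlem} and Lemma \ref{contractLemma}). First I would observe that by Definition \ref{subcatkom}, $A$ is a convolution of complexes of the form $A' \otimes P_k \otimes A''$ and $B$ is a convolution of complexes of the form $B' \otimes P_l \otimes B''$. Using bilinearity of $\otimes$ and Lemma \ref{contractLemma} (applied twice, once in each variable), it suffices to prove contractibility of a single composite $(A' \otimes P_k \otimes A'') \otimes (B' \otimes P_l \otimes B'')$; the outer factors $A'$ and $B''$ play no role, so the real content is that $P_k \otimes A'' \otimes B' \otimes P_l \simeq 0$ when $k\neq l$.

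Next I would reduce $A'' \otimes B'$ to a single Temperley-Lieb diagram $D \in \Cob(k,l)$: it is a convolution of such diagrams (after passing to $\Mat(\PCob)$), so again by Lemma \ref{contractLemma} it is enough to show $P_k \otimes D \otimes P_l \simeq 0$ for each individual diagram $D\co k \to l$ with $k \neq l$. Without loss of generality assume $k > l$ (the case $k < l$ follows by applying the duality $-^\vee$ of Section \ref{homspacessec}, which exchanges the two factors and preserves contractibility, or symmetrically by the same argument read upside-down). Any diagram $D\co k \to l$ with $k > l$ is not the identity and in fact must contain a turnback among its top $k$ endpoints — more precisely, $D$ factors as $D = D' \circ e$ where $e\co k \to k$ is (a scalar multiple of) an elementary diagram $e_i$, or directly $D$ has through-degree $\tau(D) \le l < k$ so it factors through fewer than $k$ strands on the $P_k$ side. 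Then $P_k \otimes D$ already contains a composite $P_k \otimes (\text{turnback})$ which is contractible by property (3) of Theorem \ref{uprojectorthm} (the universal projector kills any non-identity diagram), so $P_k \otimes D \simeq 0$, hence $P_k \otimes D \otimes P_l \simeq 0$.

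I expect the main obstacle to be bookkeeping about convolutions rather than any genuine difficulty: one must be careful that $A$ and $B$ are convolutions (possibly infinite, one-sided twisted complexes) of complexes factoring through $P_k$ and $P_l$ respectively, not literally equal to such complexes, and that tensoring a contractible object into a convolution yields a contractible convolution — this is exactly Lemma \ref{contractLemma} together with the compatibility of $\Tot$ with $\otimes$ used in the proof of Lemma \ref{lemlemlem}. The one genuinely geometric input is the claim that a Temperley-Lieb diagram $D\co k \to l$ with $k \ne l$ forces a turnback against the larger projector; this is immediate from $\tau(D) \le \min(k,l)$ and the characterization of $P_k$ as annihilating everything of through-degree $< k$, which is property (3) of Theorem \ref{uprojectorthm} (equivalently the chain-level analogue of property (3) of the Jones-Wenzl projector). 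So the write-up would be: reduce to a single diagram via Lemma \ref{contractLemma}, invoke $\tau(D) < \max(k,l)$, and conclude by the defining contractibility property of $P_{\max(k,l)}$.
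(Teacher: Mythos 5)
Your proposal is correct and follows essentially the same route as the paper: the geometric input is that any diagram joining $k$ strands to $l$ strands with $k\neq l$ presents a turnback to the larger projector, which is killed by property (3) of Theorem \ref{uprojectorthm}, and the passage from single factored complexes to arbitrary convolutions is handled by Lemma \ref{contractLemma} (as in Lemma \ref{lemlemlem}). The paper's proof is just a compressed version of your argument, so no substantive difference.
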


\begin{proof}
  Observe that composing complexes which factor through projectors $P_k$ and
  $P_l$ with $k \neq l$ produces a complex containing a turnback; this is
  contractible by Theorem \ref{uprojectorthm}. The composite twisted complex
  lies in the hull of a collection of contractible complexes and therefore
  it is contractible by Lemma \ref{contractLemma}.
\end{proof}

We will now state what is meant by universal higher order projectors. 

\begin{definition}\label{ukgenprojdef}
A chain complex $P \in \Kom(n)$ is a $k$th \emph{universal higher order 
  projector} if
\begin{enumerate}
\item The through-degree $\tau(P)$ of $P$ is equal to $k$.

\item $P$ vanishes when the number of turnbacks is sufficiently high. For each $l\in\ZZ_+$ and $a \in \Cob(n,l)$ if $\tau(a) < k$ then
$$a \otimes P \simeq 0 \quad\normaltext{ and }\quad P \otimes \bar{a} \simeq 0.$$ 

\item There exists a chain complex $C \in \Kom(n)$ with $\tau(C) <  k$ and a twisted complex 
$$D =  1_n\rightarrow C\rightarrow t P $$ 
such that 
$$a\otimes D \simeq 0\quad \normaltext{ and }\quad D\otimes \bar{a} \simeq 0$$
for all diagrams $a\in \Cob(n,m)$ such that $\tau(a)\leq k$.
\end{enumerate}
\end{definition}

For each sequence $\e\in \aL_n$, there is a complex $Q_\e$ (see Definition
\ref{bigqedef}) and if $|\e| = k$ then $Q_\e$
factors through $P_k$ by construction. It follows that each object $A
\in\inp{Q_\e}$ must factor through $P_k$. In particular, $P_\e \in
\Kom^{|\e|}(n)$ for each $\e\in\aL_n$.

As in Definition \ref{generalizedprojdef}, the
constitutents of the higher order projector $P_{n,k}$ consist of projectors
$P_\e$ with $\e\in \aL_{n,k}$. The categorical construction differs in that
there are now non-trivial maps between the components, $P_{\e}$. We extract
$P_{n,k}$ from the resolution of identity in the definition below.

\begin{definition}\label{ugenprojdef}
A $k$th \emph{universal higher order projector} $P_{n,k}$ is the convolution
of the subcomplex formed by isotypic components in the resolution of
identity.
$$P_{n,k} = \left(\displaystyle\bigoplus_{\e\in\aL_{n,k}} P_\e, d_{\e\nu}\right)$$
\end{definition}

\begin{remark}
  The projector $P_{n,k}$ is homotopy equivalent to the cone of the inclusion map between relative truncations of $R_n$, see Equation \eqref{coneeqn}.
\end{remark}

\begin{theorem}\label{pnkarepnkthm}
The chain complexes $P_{n,k}$ of Definition \ref{ugenprojdef} are universal higher order projectors.
\end{theorem}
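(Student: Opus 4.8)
The plan is to verify the three conditions of Definition \ref{ukgenprojdef} for $P=P_{n,k}$, everything resting on one structural feature of the resolution of identity $R_n=\{(P_\e),d_{\e\nu}\}_{\e\in\aL_n}$ of Theorem \ref{pespexistencethm}. After the applications of the Combing Lemma \ref{combinTheHairs} in the proof of that theorem, each nonzero component $d_{\e\nu}\colon P_\e\to P_\nu$ has $\e\dominatedBy\nu$; since $\e\dominatedBy\nu$ forces $|\e|\le|\nu|$, the size of the index is nondecreasing along the differential of $R_n$. Hence for every $m$ the components indexed by $\{\e\in\aL_n:|\e|\ge m\}$ form a genuine sub-twisted-complex $R_n^{\ge m}\subseteq R_n$, the components with $|\e|<m$ form the quotient $R_n^{<m}=R_n/R_n^{\ge m}$, and $R_n^{\ge k}/R_n^{\ge k+2}$ is precisely the twisted complex $P_{n,k}$ of Definition \ref{ugenprojdef} (the Maurer--Cartan equation restricts correctly because an intermediate index $\kappa$ with $\e\dominatedBy\kappa\dominatedBy\nu$ and $|\e|=|\nu|=k$ must have $|\kappa|=k$).

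Property (1) is then immediate: by Definition \ref{bigqedef} each $Q_\e$ with $|\e|=k$ factors through $P_k$, so $P_\e\in\inp{Q_\e}\subseteq\Kom^k(n)$ (Definition \ref{subcatkom}); a convolution of objects of $\Kom^k(n)$ again lies in $\Kom^k(n)$, so $P_{n,k}\in\Kom^k(n)$ and therefore $\tau(P_{n,k})=k$. The engine behind Properties (2) and (3) is the claim that \emph{if $A\in\Kom^j(n)$ and $a\in\Cob(n,l)$ has $\tau(a)<j$, then $a\otimes A\simeq 0$ and $A\otimes\bar a\simeq 0$}. To see this it suffices, since $a\otimes(-)$ is a dg functor and by Lemma \ref{contractLemma}, to treat $A=X\otimes P_j\otimes Y$; then $a\otimes X$ has through-degree $\le\tau(a)<j$, so it is a convolution of diagrams through which $P_j$ is composed with a non-identity diagram, whence $a\otimes X\otimes P_j\simeq0$ by Theorem \ref{uprojectorthm} and Lemma \ref{contractLemma}, so $a\otimes A\simeq0$; the statement for $\bar a$ is the mirror image. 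Taking $A=P_{n,k}$ and $j=k$ (each constituent $P_\e$, $|\e|=k$, factoring through $P_k$) gives Property (2).

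For Property (3), put $C:=R_n^{<k}$. As $C$ is a convolution of the $P_\e$ with $|\e|\le k-2$, each of through-degree $|\e|<k$, Definition \ref{komthroughdegdef} yields $\tau(C)<k$ (when $k$ is minimal, $C=0$ and there is nothing to check). Using $1_n\simeq R_n$ together with the two exact triangles $R_n^{\ge k}\to R_n\to C$ and $R_n^{\ge k+2}\to R_n^{\ge k}\to P_{n,k}$ coming from the filtration, one rewrites $R_n$ --- via the cone and truncation formalism of Section \ref{twistedcomsec} (Definition \ref{conedef}, Lemma \ref{truncObs}) --- as a three-term twisted complex $D$ of the required shape $1_n\to C\to tP_{n,k}$ whose total complex is homotopy equivalent to a homological shift of $R_n^{\ge k+2}$. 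Since $R_n^{\ge k+2}$ is a convolution of the $P_\e$ with $|\e|\ge k+2$, and any diagram $a$ with $\tau(a)\le k<|\e|$ annihilates each such $P_\e$ by the engine above (with $j=|\e|$), Lemma \ref{contractLemma} gives $a\otimes D\simeq a\otimes R_n^{\ge k+2}\simeq 0$ and likewise $D\otimes\bar a\simeq 0$, which is Property (3).

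The step I expect to be the main obstacle is the assembly of $D$ in the last paragraph: one must check that splicing the two triangles really produces a one-sided twisted complex with components exactly $1_n$, $C$ and $tP_{n,k}$ in the asserted homological positions, with a controlled long-range differential, and that its total complex is the claimed shift of $R_n^{\ge k+2}$. This is routine bookkeeping given the machinery of Section \ref{twistedcomsec}, but it is where the grading conventions must be tracked carefully; by contrast, the size-monotonicity of the differential of $R_n$ and the turnback/contractibility arguments are direct consequences of results already established.
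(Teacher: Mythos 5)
Your proposal is correct and follows essentially the same route as the paper: Property (1) from the fact that each $P_\e$ with $|\e|=k$ factors through $P_k$ (the paper's Observation \ref{obs4}), Property (2) by killing each constituent against $P_k$ via turnbacks and then invoking Lemma \ref{contractLemma} on the hull, and Property (3) by truncating/rotating the resolution of identity of Observation \ref{obs6} so that $D=1_n\to C\to tP_{n,k}$ is identified with the part $P_{n,k+2}\to\cdots\to P_{n,n}$ of $R_n$, which is annihilated by diagrams of through-degree at most $k$. The filtration-by-size structure you extract from the dominance order is exactly what underlies the paper's Observation \ref{obs6}, so no genuinely new ingredient or gap is involved.
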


Before proving the theorem we record several observations. By construction $P_{n,k}$ is contained in the hull of the set $\aQ = \{ Q_\e : \e \in \aL_{n,k} \}$. The next observation follows from the discussion preceding the definition.

\begin{observation}\label{obs4}
The $k$th universal higher order projector factors through the universal projector $P_k$. In particular,  $P_{n,k} \in \Kom^k(n).$
\end{observation}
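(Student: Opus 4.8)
The plan is to peel the construction of $P_{n,k}$ apart one layer at a time and observe that every constituent already lies in $\Kom^k(n)$; the conclusion then follows because $\Kom^k(n)$ is, by Definition \ref{subcatkom}, nothing but the subcategory of convolutions of complexes factoring through $P_k$, hence is closed under convolution. By Definition \ref{ugenprojdef}, $P_{n,k}$ is the convolution $\Tot\big(\{(P_\e), d_{\e\nu}\}_{\e\in\aL_{n,k}}\big)$, so it is enough to prove $P_\e\in\Kom^k(n)$ for each $\e\in\aL_{n,k}$ and then apply the ``in particular'' clause of Definition \ref{hulldef} (or, equivalently, note directly that $P_{n,k}$ lies in the hull of $\aQ=\{Q_\e:\e\in\aL_{n,k}\}$ and that $\inp{\aQ}\subseteq\Kom^k(n)$).

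For a single $P_\e$ with $|\e|=k$: by Corollary \ref{pespexistenceprop} we have $P_\e\in\inp{Q_\e}$, so it suffices to show $Q_\e$ factors through $P_k$, since the hull of a complex that factors through $P_k$ is again contained in $\Kom^k(n)$ by closure under convolution. Recall from Definition \ref{bigqedef} that $Q_\e = T_\e\otimes\bar T_\e$ with $T_\e\in\Kom(k,n)$. A short induction on $l(\e)$, running through the two inductive clauses for $T_{\e\cdot(+1)}$ and $T_{\e\cdot(-1)}$, shows that $T_\e$ is assembled from a single universal-projector box attached at its $k$-strand end, i.e. $T_\e\cong A\otimes P_k\otimes B$ for suitable $A,B$; at each inductive step the box introduced is $P_{|\e\cdot(\pm1)|}$, in accordance with the through-degree bookkeeping behind the recurrence \eqref{qerecureqn}. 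Consequently $Q_\e$ factors through $P_k$, i.e. $Q_\e\in\Kom^k(n)$, and hence $P_\e\in\inp{Q_\e}\subseteq\Kom^k(n)$.

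Putting the two steps together: $P_{n,k}$ is a convolution of the complexes $P_\e$ with $\e\in\aL_{n,k}$, each of which lies in $\Kom^k(n)$, so $P_{n,k}\in\Kom^k(n)$. The only spot where any genuine verification enters is the induction showing that $T_\e$ factors through the single projector $P_{|\e|}$; once that is in hand everything else is the formal manipulation of hulls and convolutions already set up in Section \ref{twistedcomsec}, which is why the statement is recorded here as an observation rather than proved at length.
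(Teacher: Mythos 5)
Your proof is correct and follows essentially the same route as the paper: the paper likewise observes that $P_{n,k}$ lies in the hull of $\{Q_\e : \e\in\aL_{n,k}\}$, that each $Q_\e$ with $|\e|=k$ factors through $P_k$ by the construction of $T_\e$ in Definition \ref{bigqedef}, and that $\Kom^k(n)$, being the subcategory of convolutions of complexes factoring through $P_k$, absorbs convolutions of such objects. No gaps.
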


Since each $P_{n,k}$ is the restriction of the resolution of identity to the
subcomplex consisting of the $P_{\e}$ with $|\e| = k$ we can write a
resolution of identity purely in terms of the higher order projectors.

\begin{observation}\label{obs6}
$$1_n \simeq P_{n,n \imod{2}} \rightarrow \cdots \rightarrow P_{n,n-4} \rightarrow P_{n,n-2}\rightarrow P_{n,n}$$
The diagram on the right usually contains higher differentials, $P_{n,i} \to P_{n,j}$, when $i<j$.
\end{observation}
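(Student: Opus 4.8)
The plan is to read off the asserted equivalence from the resolution of identity $R_n$ by grouping its components according to the size function $\e\mapsto|\e|$ on the index set $\aL_n$. By Theorem~\ref{pespexistencethm} we have $1_n\simeq R_n=\{(P_\e),d_{\e\nu}\}_{\e\in\aL_n}$, a finite one-sided twisted complex over $\Kom(n)$; the final application of the Combing Lemma~\ref{combinTheHairs} in that proof arranges that $d_{\e\nu}=0$ unless $\e\dominatedBy\nu$, this being possible by Theorem~\ref{homComputation}. Evaluating the dominance inequality at $k=n$ gives $\e\dominatedBy\nu\Rightarrow|\e|\le|\nu|$, so the size function is monotone for the dominance order, and $\aL_n$ is partitioned into the finitely many level sets $\aL_{n,k}$, one for each $k$ with $0\le k\le n$ and $k\equiv n\pmod 2$, i.e.\ $k\in\{n\imod{2},\,n\imod{2}+2,\dots,n\}$.

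First I would check that for each fixed $k$ the collection $\{(P_\e),d_{\e\nu}\}_{\e\in\aL_{n,k}}$ is itself a twisted complex. In Equation~\eqref{MCeqn} for $R_n$ with $\e,\nu\in\aL_{n,k}$, any intermediate index $\mu$ with $|\mu|\ne k$ kills one of the two factors $d_{\e\mu}$, $d_{\mu\nu}$ by monotonicity of size, so the equation reduces to the one for the sub-collection indexed by $\aL_{n,k}$. Hence the convolution $P_{n,k}=\bigl(\bigoplus_{\e\in\aL_{n,k}}P_\e,\,d_{\e\nu}\bigr)$ of Definition~\ref{ugenprojdef} is a well-defined chain complex, a finite convolution of bounded-below complexes.

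Next I would coarsen the index poset $\aL_n$ along the size function. Since the connecting maps of $R_n$ respect dominance, hence size, contracting each block $\aL_{n,k}$ to a single object produces a twisted complex $\overline R_n$ over $\Kom(n)$ indexed by the linearly ordered set $\{n\imod{2}<n\imod{2}+2<\cdots<n\}$: its $k$th object is $P_{n,k}$, its within-block differentials have been absorbed into the $P_{n,k}$, and its connecting map $P_{n,i}\to P_{n,j}$ for $i<j$ is the collection $\{d_{\e\nu}:\e\in\aL_{n,i},\ \nu\in\aL_{n,j}\}$. This $\overline R_n$ is one-sided because no $d_{\e\nu}$ runs from a larger block to a smaller one; its twisted-complex equation results from grouping the equation of $R_n$ by size; and $\Tot(\overline R_n)$ is canonically isomorphic to $\Tot(R_n)$, since the two have the same underlying graded object $\bigoplus_{\e}P_\e$ and the same total differential up to the standard diagonal sign reindexing. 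Feeding $\overline R_n$ into Lemma~\ref{truncObs} now exhibits $\Tot(\overline R_n)$ as the iterated mapping cone of the $P_{n,k}$ taken in increasing order of $k$, so that
$$1_n\;\simeq\;\Tot(R_n)\;\cong\;\Tot(\overline R_n)\;=\;\bigl(P_{n,n\imod{2}}\to\cdots\to P_{n,n-2}\to P_{n,n}\bigr),$$
with the connecting maps $P_{n,i}\to P_{n,j}$ for $i<j$ assembled from those $d_{\e\nu}$ having $|\e|<|\nu|$; these are the higher differentials of the statement, and they are nonzero in general.

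I expect the only step needing genuine care to be the coarsening in the last paragraph: verifying that the size filtration of $R_n$ produces sub-twisted-complexes with the expected block subquotients and that the homological shifts and signs inherited from $R_n$ match the conventions of Definitions~\ref{totdef} and~\ref{conedef} and of Lemma~\ref{truncObs}. This is formal, given that the connecting maps of $R_n$ respect dominance and therefore size; no geometric input beyond Theorems~\ref{pespexistencethm} and~\ref{homComputation} is required.
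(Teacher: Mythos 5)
Your proposal is correct and follows essentially the same route as the paper, which offers no separate proof: the observation is read off from the construction of $R_n$ in Theorem \ref{pespexistencethm} exactly as you do, by grouping the $P_\e$ by size $|\e|=k$ (Definition \ref{ugenprojdef}) and using that the combed differentials respect the dominance order, hence never decrease size. Your extra verifications (the block Maurer--Cartan equations and the compatibility of $\Tot$ with the coarsened, size-indexed twisted complex) are exactly the formal bookkeeping the paper leaves implicit.
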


We are ready to prove that the chain complexes $P_{n,k}$ extracted from the
resolution of identity above satisfy the properties listed in Definition
\ref{ukgenprojdef}.

\begin{proof}{(of Theorem \ref{pnkarepnkthm})}
The first property, $\tau(P_{n,k}) = k$, follows from
Observation \ref{obs4} above.

Suppose that $a \in \Cob(n,l)$ is a diagram with $\tau(a) < k$. Again, by
Observation \ref{obs4} the complex $a\otimes P_{n,k}$ is contained in the
hull of $\{a\otimes Q_\e : |\e| = k\}$, but $a\otimes Q_\e \simeq 0$ because
$Q_\e$ factors through $P_k$. Each complex in the hull of a collection of
contractible complexes is contractible by Lemma \ref{contractLemma}.

Rotating distinguished triangles in Observation \ref{obs6} above
gives the homotopy equivalence
$$ 1_n\rightarrow t(P_{n, n \imod{2}}\rightarrow \cdots \rightarrow P_{n,k-2}) \rightarrow P_{n,k} \simeq P_{n,k+2}\rightarrow \cdots\rightarrow P_{n,n}$$
Let $D$ be the left-hand side of this equation and set $C$ to be the middle term
$$C = t(P_{n, n\imod{2}}\rightarrow \cdots \rightarrow  P_{n,k-2})$$
so that the third property follows.
\end{proof}

We have seen that the chain complexes $P_{n,k}$ defined above are $k$th
universal higher order projectors. The next theorem states that each chain
complex $P$ satisfying the properties of Definition \ref{ukgenprojdef} is
homotopy equivalent to the chain complex $P_{n,k}$.

\begin{theorem}\label{pnkareuniquethm}
If $P\in\Kom(n)$ is a $k$th universal higher order projector then $P$ is homotopy equivalent to $P_{n,k}$ of Definition \ref{ukgenprojdef}.
\end{theorem}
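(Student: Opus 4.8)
The plan is to show that a $k$th universal higher order projector $P$ is isomorphic (up to homotopy) to $P_{n,k}$ by using the resolution of identity $R_n$ together with the through-degree filtration, mimicking the uniqueness argument for $p_{n,k}$ in Theorem \ref{thmgenproj} but in the homotopy category. First I would tensor $P$ with the resolution of identity: since $1_n \simeq R_n$ and Observation \ref{obs6} gives $1_n \simeq (P_{n,n\imod 2}\to \cdots \to P_{n,n-2}\to P_{n,n})$, we get $P \simeq P\otimes 1_n$ which is a convolution of the complexes $P\otimes P_{n,l}$ for $l$ ranging over the allowed through-degrees. The goal is to show $P\otimes P_{n,l} \simeq 0$ for $l\ne k$ and $P\otimes P_{n,k}\simeq P$, so that the convolution collapses.

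The vanishing $P\otimes P_{n,l}\simeq 0$ for $l\ne k$ should come in two cases, exactly as in the proof of Theorem \ref{thmgenproj}. If $l < k$: by Observation \ref{obs4}, $P_{n,l}$ factors through $P_l$, hence $P_{n,l}$ is a convolution of complexes of the form $a\otimes P_l\otimes b$ with $\tau(a\otimes P_l) = l < k$; applying Property (2) of Definition \ref{ukgenprojdef} to $P$ (with the diagram $\bar a$ of through-degree $< k$) gives $P\otimes (a\otimes P_l\otimes b)\simeq 0$, and Lemma \ref{contractLemma} propagates this through the convolution. If $l > k$: symmetrically, $P$ factors through $P_k$ by Property (1), so $P$ is a convolution of complexes factoring through $P_k$, while $P_{n,l}$ factors through $P_l$ with $l\ne k$; Lemma \ref{ortholemma} then forces each pairwise composite to be contractible, and Lemma \ref{contractLemma} finishes. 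Thus in the convolution $P\simeq P\otimes(P_{n,n\imod 2}\to\cdots\to P_{n,n})$ every term except $P\otimes P_{n,k}$ is contractible, so $P\simeq P\otimes P_{n,k}$.

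It remains to identify $P\otimes P_{n,k}$ with $P_{n,k}$. For this I would use Property (3) of Definition \ref{ukgenprojdef} for $P$: there is a twisted complex $D = (1_n\to C\to tP)$ with $\tau(C) < k$ and $D\otimes \bar a\simeq 0$ for all diagrams $a$ with $\tau(a)\le k$. Since $P_{n,k}\in\Kom^k(n)$ is a convolution of complexes $a\otimes P_k\otimes b$ with $\tau(a\otimes P_k) = k$, tensoring $D$ on the right with $\overline{a\otimes P_k} = \bar b\otimes P_k\otimes\bar a$ and using $D\otimes \bar a\simeq 0$ (as $\tau(\bar a)\le k$) gives $D\otimes P_{n,k}\simeq 0$ by another application of Lemma \ref{contractLemma}. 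Unwinding the twisted complex $D\otimes P_{n,k} = (P_{n,k}\to C\otimes P_{n,k}\to tP\otimes P_{n,k})$ and noting that $C\otimes P_{n,k}\simeq 0$ because $\tau(C) < k$ (same argument as the $l<k$ case above), contractibility of $D\otimes P_{n,k}$ forces the remaining map $P_{n,k}\to tP\otimes P_{n,k}$ to be a homotopy equivalence, i.e. $P\otimes P_{n,k}\simeq P_{n,k}$. Combining with $P\simeq P\otimes P_{n,k}$ yields $P\simeq P_{n,k}$.

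The main obstacle I anticipate is bookkeeping the degree shifts and signs when converting the distinguished triangles of Observations \ref{obs4} and \ref{obs6} into honest twisted complexes and then tensoring them with $P$ or $D$; in particular one must check that "tensoring a convolution of contractibles with a fixed complex again gives a convolution of contractibles" is legitimate here — this is exactly the content of Lemma \ref{lemlemlem} and Lemma \ref{contractLemma}, but applying it requires knowing the relevant $\Tot$ (or $\Tot^\Pi$) is defined, which holds since all complexes in sight are bounded below with finite-dimensional graded pieces. A secondary subtlety is that $P_{n,k}$ is itself only a twisted complex, not a single diagram, so Property (2)/(3) of Definition \ref{ukgenprojdef} must be invoked term-by-term on its constituents $a\otimes P_k\otimes b$ and then reassembled; making this precise is routine given Lemma \ref{contractLemma} but should be stated carefully.
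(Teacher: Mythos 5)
Your proposal is correct and follows essentially the same route as the paper: tensor $P$ with the resolution of identity from Observation \ref{obs6} to get $P\simeq P_{n,k}\otimes P$, then tensor the twisted complex $D = 1_n\to C\to tP$ from Property (3) with $P_{n,k}$, kill $C\otimes P_{n,k}$ by through-degree, and use contractibility of the resulting cone to conclude $P_{n,k}\otimes P\simeq P_{n,k}$. The only minor slip is citing Property (1) as saying $P$ ``factors through $P_k$'' (it only gives $\tau(P)=k$), but the vanishing $P\otimes P_{n,l}\simeq 0$ for $l>k$ still follows, e.g.\ from Property (2) of $P_{n,l}$ applied term-by-term to the chain groups of $P$ together with Lemma \ref{contractLemma}.
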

\begin{proof}
Suppose that $P \in \Kom(n)$ satisfies properties (1)--(3) of Definition \ref{ukgenprojdef} above.  From Observation \ref{obs6} we have the resolution of identity.
$$1_n \simeq P_{n,n \imod{2}} \rightarrow \cdots \rightarrow P_{n,n-4} \rightarrow P_{n,n-2}\rightarrow P_{n,n}$$
Applying $P\otimes -$ above yields $P = P\otimes 1_n \simeq P_{n,k}\otimes P$.
By Property (3) there are complexes $C$ and $D$ where
$$D = 1_n\rightarrow C\rightarrow t P.$$
Since $\tau(P_{n,k})=k$, Property (3) also implies that $P_{n,k}\otimes D\simeq 0$. Now $P_{n,k}\otimes D\simeq 0$ tells us that
\[
0 \simeq P_{n,k}\otimes 1_n \rightarrow P_{n,k} \otimes C \rightarrow tP_{n,k}\otimes P = \Cone(P_{n,k} \rightarrow P_{n,k}\otimes P)
\]
because $P_{n,k} \otimes C \simeq 0$. However, since $\Cone(f)\simeq 0$ if and only if
$f$ is a homotopy equivalence, the above equation implies
that $P_{n,k}\otimes P\simeq P$ and therefore $P_{n,k}\simeq P$.
\end{proof}

Now that existence and uniqueness have been shown, we continue our
discussion with a series of observations.

\begin{proposition}
The top projector $P_{n,n}$ is a universal projector $P_n$. 
\end{proposition}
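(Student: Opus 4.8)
The plan is to deduce the statement from the uniqueness clause of Theorem~\ref{uprojectorthm}. The first step is purely combinatorial: the index set $\aL_{n,n}$ consists of the single sequence $(1,1,\dots,1)$, because $|\e|=n$ together with $\e_i\in\{\pm 1\}$ forces every $\e_i=1$. Hence Definition~\ref{ugenprojdef} collapses to $P_{n,n}=P_{(1,\dots,1)}$, with no internal differential to record, and it suffices to check that $P_{(1,\dots,1)}$ (which I abbreviate $P_{(1^n)}$) satisfies the three properties that characterize the universal projector $P_n$.

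Property~(3) of Theorem~\ref{uprojectorthm} comes essentially for free: by Theorem~\ref{pnkarepnkthm}, $P_{n,n}$ is a $k$th universal higher order projector with $k=n$, so Property~(2) of Definition~\ref{ukgenprojdef} gives $a\otimes P_{n,n}\simeq 0$ and $P_{n,n}\otimes\bar a\simeq 0$ whenever $\tau(a)<n$. Any Temperley--Lieb diagram $D$ on $2n$ boundary points that is not the identity must contain a turnback, so $\tau(D)\le n-2<n$; letting $\bar a$ run over all such $D$ in the second vanishing yields $P_{n,n}\otimes D\simeq 0$ for every non-identity diagram $D$, which is exactly Property~(3).

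For Properties~(1) and~(2) I would use the inductive presentation of $P_{(1^n)}$. Since $P_{(1)}=Q_{(1)}=1_1$, the base case is trivial, and internal degree zero is guaranteed throughout by the conventions of Section~\ref{categorified TL section}. For the inductive step, Corollary~\ref{pespexistenceprop} applied to $(1,\dots,1)\in\aL_{n-1}$ exhibits $P_{(1^n)}=\Cone\bigl(\delta : P_{(1^{n-1})}\sqcup 1 \to t\,P_{(1^{n-1},-1)}\bigr)$. By induction $P_{(1^{n-1})}$ is a universal projector, so $P_{(1^{n-1})}\sqcup 1$ is positively graded and contains the identity diagram $1_n=1_{n-1}\sqcup 1$ exactly once, in homological degree zero, every other summand of it being either a non-identity diagram in degree zero or a term in positive degree. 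On the other hand $|(1^{n-1},-1)|=n-2$, so by the remark following Corollary~\ref{pespexistenceprop} the complex $P_{(1^{n-1},-1)}$ lies in $\Kom^{n-2}(n)$; each diagram occurring in it has through-degree $\le n-2<n$ and so is never the identity, and $t\,P_{(1^{n-1},-1)}$ sits in strictly positive homological degree inside the cone. Consequently $P_{(1^n)}$ is positively graded and still contains the identity diagram exactly once, in homological degree zero, establishing Properties~(1) and~(2). The uniqueness statement of Theorem~\ref{uprojectorthm} then forces $P_{n,n}=P_{(1^n)}\simeq P_n$, completing the argument.

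The step that will need the most care is Property~(2): one must be sure that the auxiliary target $P_{(1^{n-1},-1)}$ of the cone contributes no identity diagram, which is exactly the assertion $P_\e\in\Kom^{|\e|}(n)$, and one should confirm against Definition~\ref{conedef} that this target lands in positive homological degree rather than negative. A cleaner-looking but essentially equivalent alternative is to match $\Cone(\delta)$ directly with the standard-form presentation of $P_n$ in Lemma~\ref{standardformlemma}, using that the connecting map and tail produced in the base case of Theorem~\ref{conethingthm} for the all-$(+1)$ sequence are precisely the map $\d'$ and the tail $T$ of that lemma; this replaces the axiom-chasing above by a bookkeeping check of which choices the construction makes along the sequence $(1),(1,1),(1,1,1),\dots$.
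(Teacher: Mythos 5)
Your proposal is correct and takes essentially the route the paper itself sketches: the paper's proof just says to compare the three properties of Definition \ref{ukgenprojdef} with those of Theorem \ref{uprojectorthm} and, alternatively, to trace through the construction, and your write-up is a fleshed-out combination of exactly these two suggestions (axiom (3) of Theorem \ref{uprojectorthm} from the higher order projector axioms, axioms (1)--(2) by induction through the cone presentation of Corollary \ref{pespexistenceprop}, then the uniqueness clause of Theorem \ref{uprojectorthm}). No genuinely different idea is involved, and the level of detail you supply is at least that of the paper's own argument.
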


\begin{proof}
  This can be seen indirectly by comparing the three properties found in
  Definition \ref{ukgenprojdef} with those of Theorem \ref{uprojectorthm}.
  Alternatively, this can be seen directly by tracing through the
  construction in either Section \ref{explicitsec} or Section
  \ref{mainconstructionsec}. See \cite{CK} for an extended discussion of the
  axioms found in Theorem \ref{uprojectorthm}.
\end{proof}

\begin{remark}
The universal projectors $P_{n,n}$ of Definition \ref{uprojectorthm} were
first categorified in \cite{CK, R1, FSS}.  The bottom projectors
$P_{2n,0}\in\Kom(n)$ were categorified and related to the Hochschild
homology of Khovanov's ring $H_n$ in \cite{R2}.
\end{remark}

\begin{proposition}
The $P_{n,k}$ are mutually orthogonal idempotents,
$$P_{n,k}\otimes P_{n,l}\simeq \delta_{k,l} P_{n,k}.$$
\end{proposition}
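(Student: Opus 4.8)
The plan is to split into the orthogonality case $k\neq l$ and the idempotency case $k=l$, reducing each to results already in hand.

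For $k\neq l$ the argument is immediate. By Observation \ref{obs4} the complex $P_{n,k}$ factors through the universal projector $P_k$, i.e. $P_{n,k}\in\Kom^k(n)$, and likewise $P_{n,l}\in\Kom^l(n)$. Since $k\neq l$, Lemma \ref{ortholemma} (applied with $A = P_{n,l}$ and $B = P_{n,k}$) gives $P_{n,k}\otimes P_{n,l}\simeq 0$, and the symmetric application gives $P_{n,l}\otimes P_{n,k}\simeq 0$ as well. Nothing further is needed here.

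For $k=l$ the plan is to tensor the resolution of identity with $P_{n,k}$ and use the orthogonality just proved to annihilate all but one term. Observation \ref{obs6} exhibits $1_n$ as the total complex of a one-sided twisted complex whose components are the higher order projectors $P_{n,m}$ for $m = n, n-2, n-4, \dots$. Applying the functor $P_{n,k}\otimes -$, which passes through $\Tot$ exactly as in the proof of Lemma \ref{lemlemlem}, produces a twisted complex with components $P_{n,k}\otimes P_{n,m}$ whose total complex is homotopy equivalent to $P_{n,k}\otimes 1_n = P_{n,k}$. By the $k\neq l$ case, $P_{n,k}\otimes P_{n,m}\simeq 0$ for every $m\neq k$, so every component is contractible except the one at $m = k$, namely $P_{n,k}\otimes P_{n,k}$. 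Collapsing the contractible components — for instance by writing the total complex as an iterated mapping cone via Lemma \ref{truncObs} and repeatedly using that a mapping cone on or over a contractible complex is homotopy equivalent to the other term, or by invoking Corollary \ref{substitution} — then yields $P_{n,k}\simeq P_{n,k}\otimes P_{n,k}$.

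The only point requiring care is this last collapsing step: one must check that discarding the contractible components leaves $P_{n,k}\otimes P_{n,k}$ in its original homological position, with no stray grading shift. That bookkeeping can be sidestepped entirely by noting that $P_{n,k}$ is a $k$th universal higher order projector by Theorem \ref{pnkarepnkthm}, so that the opening step of the proof of Theorem \ref{pnkareuniquethm}, namely the identification $P = P\otimes 1_n \simeq P_{n,k}\otimes P$, applied with $P = P_{n,k}$, gives $P_{n,k}\simeq P_{n,k}\otimes P_{n,k}$ directly. I would present the self-contained convolution argument and add a remark pointing out this shortcut.
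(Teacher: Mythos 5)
Your proposal is correct and takes essentially the same route as the paper: orthogonality for $k\neq l$ via Observation \ref{obs4} together with Lemma \ref{ortholemma}, and idempotency for $k=l$ by tensoring the resolution of identity of Observation \ref{obs6} with $P_{n,k}$ and discarding the components annihilated by the orthogonality just established. Your extra discussion of the collapsing step (and the alternative appeal to the first step of the proof of Theorem \ref{pnkareuniquethm}) merely makes explicit bookkeeping that the paper's one-line argument leaves implicit.
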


% darth vader vs hitler

\begin{proof}
When $k\neq l$ the statement $P_{n,k} \otimes P_{n,l} \simeq 0$ follows from
Observation \ref{obs4} and Lemma \ref{ortholemma} above. If $k=l$ then
consider the resolution of identity
$$1_n \simeq P_{n,n\imod{2}} \rightarrow \cdots \rightarrow P_{n,n-2}\rightarrow P_{n,n}$$ 
composing with $P_{n,k}$ gives $P_{n,k} = P_{n,k}\otimes 1_n \simeq P_{n,k}\otimes P_{n,k}$.
\end{proof}

The proof of the proposition below is analogous to the proof of Proposition
\ref{slidethroughobs}.

\begin{proposition}
Suppose that $a\in\Kom(m,n)$ then
$$a\otimes P_{m,k} \simeq P_{n,k}\otimes a.$$
In pictures,
$$\CPic{atopnk} \simeq \hspace{.5in}\CPic{mktopa}.$$
\end{proposition}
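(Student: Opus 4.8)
The plan is to follow the proof of Proposition~\ref{slidethroughobs}, replacing direct sums of idempotents by convolutions and vanishing of products of idempotents by contractibility of composites. Write $C=a\otimes P_{m,k}$ and $D=P_{n,k}\otimes a$. The first step is to record that $C,D\in\Kom^k(m,n)$. By Observation~\ref{obs4}, $P_{m,k}\in\Kom^k(m)$, so by Definition~\ref{subcatkom} it is a convolution of complexes of the form $X\otimes P_k\otimes Y$; tensoring such a complex with the fixed complex $a$ again produces a complex factoring through $P_k$, so $C$ is a convolution of complexes factoring through $P_k$. The same reasoning gives $D\in\Kom^k(m,n)$.

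Next I would compute $C$ up to homotopy. Since $1_n\simeq R_n$ (Theorem~\ref{pespexistencethm}) and tensoring by the fixed complex $C$ is a dg functor commuting with $\Tot$, one has
$$C\;=\;1_n\otimes C\;\simeq\;R_n\otimes C\;=\;\bigl\{\,(P_{n,l}\otimes C)\,,\ d_{ll'}\otimes\Id\,\bigr\}$$
a twisted complex over $\Kom(m,n)$ indexed by $l\in\{n\imod{2},\dots,n\}$. By Lemma~\ref{ortholemma}, $P_{n,l}\otimes C\simeq 0$ for every $l\neq k$, since $C\in\Kom^k(m,n)$ and $P_{n,l}\in\Kom^l(n)$. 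By Observation~\ref{obs6} the twisted complex $R_n$, hence also $R_n\otimes C$, is one-sided with every nonzero arrow increasing in $l$; thus the components with $l\ge k$ span a subcomplex of $R_n\otimes C$ whose quotient is a convolution of the $P_{n,l}\otimes C$ with $l<k$, and within that subcomplex the components with $l>k$ span a subcomplex whose quotient is the single term $P_{n,k}\otimes C$. Both quotients just described are convolutions of contractible complexes, hence contractible by Lemma~\ref{contractLemma}, so the two associated exact triangles yield $R_n\otimes C\simeq P_{n,k}\otimes C$; that is, $a\otimes P_{m,k}\simeq P_{n,k}\otimes a\otimes P_{m,k}$.

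The third step is the mirror-image computation on the other side: from $1_m\simeq R_m$ one gets $D\simeq D\otimes R_m=\{(D\otimes P_{m,l})\}$, with $D\otimes P_{m,l}\simeq 0$ for $l\neq k$ by Lemma~\ref{ortholemma} (now using $D\in\Kom^k(m,n)$ and $P_{m,l}\in\Kom^l(m)$); collapsing exactly as before gives $P_{n,k}\otimes a\simeq P_{n,k}\otimes a\otimes P_{m,k}$. Combining the two homotopy equivalences gives $a\otimes P_{m,k}\simeq P_{n,k}\otimes a\otimes P_{m,k}\simeq P_{n,k}\otimes a$, as claimed.

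I do not expect a serious obstacle. The only delicate point is the collapsing step: one must use the explicit one-sided structure of $R_n$ recorded in Observation~\ref{obs6} to realize $P_{n,k}\otimes C$ as an iterated subquotient of $R_n\otimes C$, rather than trying to delete a contractible object from the middle of a twisted complex; once that is set up, the argument reduces to two exact triangles and Lemma~\ref{contractLemma}. Everything else is bookkeeping, parallel to the skein-theoretic manipulation in the proof of Proposition~\ref{slidethroughobs}.
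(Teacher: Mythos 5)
Your proof is correct and is essentially the paper's own argument: the paper proves this proposition by the same maneuver as Proposition \ref{slidethroughobs}, namely composing with the resolutions of identity $R_n$ and $R_m$ on either side and using orthogonality (Lemma \ref{ortholemma} together with Lemma \ref{contractLemma}) to collapse both $a\otimes P_{m,k}$ and $P_{n,k}\otimes a$ onto $P_{n,k}\otimes a\otimes P_{m,k}$. One small wording slip: in your collapsing step the two contractible pieces are the quotient spanned by the terms with $l<k$ and the \emph{subcomplex} spanned by the terms with $l>k$ (not the quotient $P_{n,k}\otimes C$, which is the term you keep); the exact triangles you invoke are otherwise exactly right.
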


Theorem \ref{homComputation} implies that
$\Hom$-spaces between convolutions of $Q_\e$ and $Q_\nu$ are contractible
when $\e \ndominatedBy \nu$. Since the complexes $P_\e$ are convolutions of
$Q_\e$ and $P_{n,k}$ is constructed using $P_\e$ with $\e\in\aL_{n,k}$,
$$\Hom^*(P_{n,i},P_{n,j})\simeq 0 \conj{when} j < i.$$

% In particular, we have all of the ingredients necessary for an application
% of Theorem \ref{modulethmdef}. 

% \begin{theorem}\label{dgathm}
% For each $n > 0$, if $\aP^n = \{ P_{n,n}, P_{n,n-2}, P_{n,n-4},\ldots \}$ is the set of higher order projectors then there exists a differential graded algebra,
% $$E^n = \bigoplus_{i \leq j} \Hom^*(P_{n,i}, P_{n,j}),$$
% and the category of left $E^n$-modules is equivalent to the hull,
% $$E^n\module \simeq \inp{\aP^n}.$$
% \end{theorem}

% If $H_n$ is Khovanov's ring $H_n$ \cite{KH} then the same theorem can be
% seen to apply with $\Kom^*(n)$ replaced by the dg category of chain
% complexes of $H_n-H_n$-bimodules.

% Using the correspondence, 
% $$X\in\Kom^*(n) \mapsto X\otimes - \in\Endo(\Kom^*(n))$$
% we may view $\inp{\aP^n}\subset \Endo(\Kom^*(n))$ as a subcategory of
% endofunctors.

% One can use the properties of $P_{n,k}$ discussed above to construct a model
% for each $P_{n,k}$ as a convolution of chain complexes involving only the
% universal projector $P_k$. This implies that the differential graded algebra $\Endo^*(P_{n,k})$ is a
% kind of extension of $\Endo^*(P_k)$. 

Theorem \ref{modulethmdef} implies that the differential graded algebra $E =
\oplus_{i\leq j} \Hom^*(P_{n,i},P_{n,j})$ is of fundamental importance. This
algebra is a generalization of the endomorphism algebras $\Endo^*(P_n)$. The
precise nature of the algebra $\Endo^*(P_n)$ has been the focus of a series
of conjectures by the authors Gorsky, Oblomkov, Rasmussen and Shende, see
\cite{GOR1}.

\subsection{Postnikov decompositions}\label{postnikovdecompsec}

In this section we discuss a more algebraic characterization of the
projectors $P_{n,k}$ and the resolution of identity $R_n$.

\begin{definition}
The resolution of identity $R_n$ can be written as a convolution of higher order projectors:
$$1_n \simeq R_n = P_{n,n \imod{2}} \rightarrow \cdots \rightarrow P_{n,n-4} \rightarrow P_{n,n-2}\rightarrow P_{n,n},$$
see Observation \ref{obs6}. Truncating the diagram yields triangles of {\em inhomogeneous idempotents}.
$$R_n = W_{n,k} \to Z_{n,k}$$
where
$$W_{n,k} = P_{n,n\imod{n}} \to \cdots \to P_{n,k-2} \quad\normaltext{ and }\quad Z_{n,k} = P_{n,k} \to P_{n,k+2} \to \cdots \to P_{n,n}.$$
For each $k$, there is a canonical inclusion $Z_{n,k} \to Z_{n,k-2}$, which yields a triangle 
\begin{equation}\label{coneeqn}
P_{n,k-2} \to Z_{n,k} \to Z_{n,k-2}
\end{equation}
\end{definition}

It follows from the triangles in Equation \eqref{coneeqn} that the
inhomogeneous idempotents $Z_{n,k}$ determine the higher order projectors
$P_{n,k}$ up to homotopy. We will show that the $Z_{n,k}$ can be
characterized by localization.

\begin{remark}
For any object $S$, these triangles can be sewn together to obtain the canonical decomposition pictured below.
\begin{center}
\begin{tikzpicture}[scale=10, node distance=2.5cm]
\node (A) {$\cdots$};
\node (B) [right=1cm of A] {$S\otimes Z_k$};
\node (BT) [above=1cm of B] {$S\otimes P_{n,k-2}$};
\node (C) [right of=B] {$S\otimes Z_{k-2}$};
\node (CT) [above=1cm of C] {$S\otimes P_{n,k-4}$};
\node (D) [right=1cm of C] {$\cdots$};
\node (E) [right=1cm of D] {$S \otimes R_n \cong S$};
\draw[->] (A) to node {} (B);
\draw[->] (BT) to node {} (B);
\draw[->] (B) to node {} (C);
\draw[->] (C) to node {} (D);
\draw[->] (CT) to node {} (C);
\draw[->] (D) to node {} (E);
\end{tikzpicture}
\end{center}
\end{remark}

The picture above is a Postnikov type decomposition. It is well known that
such decompositions can be constructed functorially using the language of
quotients and localization.  In Proposition \ref{bousprop} we will show that
the chain complex $Z_{n,k}$ determines the Bousfield colocalization with
respect to the $k$th layer of the through-degree filtration.

We must extend the category $\Kom(n)$ so that certain limits are guaranteed
to exist.

\begin{definition}{($\Ho_{+}(n)$)}\label{komoplusdef}
  Let $\Kom_{+}(n,m) = \Kom_+(\Cob(n,m))$ be the closure of $\Kom(n,m)$
  under small coproducts.  We will use the abbreviations:
  $$\Kom_{+}(n) = \Kom_{+}(n,n)\conj{ and } \Ho_+(n) = \Ho(\Kom_+(n)).$$
\end{definition}

The next definition should be compared to Remark \ref{REMREF}.

\begin{definition}
  The category $\Ho_+(n)$ is filtered by through-degree $\tau$, see
  Definition \ref{komthroughdegdef}.  If $\Kom^k_+(n)\subset \Kom_+(n)$ is
  the full subcategory consisting of chain complexes of diagrams with
  through-degree less than $k$ then by setting
$$\Ho^k_+(n) = \Kom^k_+(n)$$
we obtain the filtration
$$\cdots \subset \Ho_+^{k-1}(n) \subset \Ho_+^{k}(n) \subset \Ho^{k+1}_+(n) \subset \cdots\conj{ and } \Ho_+(n) = \cup_k \Ho_+^k(n).$$
\end{definition}

The resolution of identity $R_n$ and the projectors $P_{n,k}$ exist in
$\Kom_{+}(n)$ and the inhomogeneous projector $Z_{n,k}$ induces a functor:
$$-\otimes Z_{n,k} : \Ho_+(n)\to \Ho_+(n).$$ 
We would like to show that this functor is the universal functor which
annihilates subcategory $\Ho^k_+(n)$. Before stating the proposition we
recall some vocabulary.

\begin{definition}
Let $\aT$ be a triangulated category. Then $L : \aT\to \aT$ is a {\em localization functor} when $L$ is exact and there is a map $\eta : \Id_{\aT}\to L$ such that $L\eta : L \to L^2$ is invertible and $L\eta = \eta L$. A functor $L: \aT\to \aT$ is a {\em colocalization functor} when $L^{\op} : \aT^{\op} \to \aT^{\op}$ is a localization functor.
\end{definition}

\begin{definition}
Let $L : \aT \to \aT$ be a colocalization functor. Then an object $X\in \aT$ is {\em $L$-acyclic} when $LX \cong 0$ and the {\em kernel of $L$} $\ker(L)\subset \aT$ is the subcategory formed by all $L$-acyclic objects.
\end{definition}

For further detail the reader may consult \cite[Ch. 9]{Neeman}.

\begin{proposition}\label{bousprop}
The functor determined by the inhomogeneous projector
$$-\otimes Z_{n,k} : \Ho^+(n)\to \Ho^+(n)$$ 
is a colocalization functor and $\ker(-\otimes Z_{n,k}) = \Ho^k_+(n)$.
\end{proposition}
\begin{proof}
The functor $-\otimes Z_{n,k}$ is exact because it commutes with cones and suspension. 

If $S\in \Ho^k_+(n)$ then $S$ is a chain complex consisting of diagrams with through-degree less than $k$ then the second property of Definition \ref{ukgenprojdef} implies that $S\ott Z_{n,k} \cong 0$. Conversely, if $S\in\Ho_+(n)$ and $S\ott Z_{n,k} \cong 0$ then 
$$S \cong S\ott Z_{n,k} \to S\ott W_{n,k} \cong S\ott W_{n,k}$$
implies that $\tau(S)=\tau(S\ott W_{n,k})\leq \tau(W_{n,k})< k$ and therefore $S\in \Ho_+^k(n)$.  We conclude that $\ker(-\otimes Z_{n,k}) = \Ho^k_+(n)$.

There is a canonical map $j_{n,k} : Z_{n,k} \to R_n$. If $S\in \Ho_+(n)$
then define $\eta_{S} : S\ott Z_{n,k} \to S$ by the composition $\varphi_S
\circ (\Id_S \otimes j_{n,k})$ where $\varphi_S : R_n \otimes S
\xrightarrow{\sim} S$. The naturality of the map $\Id_S \ott j_{n,k}$
follows from the definition of monoidal structure. The map $\varphi_S$ is
natural in $S$ because it can be written as $S\ott R_n = \Cone(S \to C_S)
\xrightarrow{\sim} S$ where $C_S$ is a contractible chain
complex. Therefore, $\eta$ is a natural transformation. The properties of
$\eta$ follow from idempotence, $Z_{n,k}\otimes Z_{n,k}\cong Z_{n,k}$, and
associativity relations.

\end{proof}

\noindent {\bf Acknowledgements.}  The authors would like thank V. Krushkal for his
interest. 

\noindent B. Cooper was supported in part by the Max Planck Institute for Mathematics in
Bonn.

\end{document}